\documentclass[11pt,a4paper]{article}
\usepackage[T1]{fontenc}
\usepackage[french,english]{babel}
\selectlanguage{english}
\usepackage{amsmath}
\usepackage{caption}
\usepackage{amsfonts}
\usepackage{amssymb}
\usepackage{graphicx}
\usepackage{mathtools}
\usepackage{enumitem}
\usepackage[dvipsnames]{xcolor}
\usepackage[left=2.5cm,right=2.5cm,top=2.3cm,bottom=2.3cm]{geometry}
\usepackage{amsthm}
\usepackage{subcaption}
\usepackage{float}
\usepackage{authblk}
\usepackage{hyperref} 
\usepackage{bm}
\hypersetup{colorlinks = true, linkcolor=red, citecolor=blue}

\newcommand{\psh}[2]{\left\langle #1 \, , #2 \right\rangle \xspace}

\newcommand{\sumlim}[2]{\sum\limits_{#1}^{#2}\xspace}
\newcommand{\itg}[4]{\int_{#1}^{#2} #3 \, \mathrm{d}#4 \xspace}

\newcommand{\R}{\mathbb R}
\newcommand{\N}{\mathbb N}
\newcommand{\Z}{\mathbb Z}

\newcommand{\cO}{{\mathcal O}}

\newcommand{\cE}{{\mathcal E}}


\begin{document}
\numberwithin{equation}{section}
\newtheorem{theo}{Theorem}[section]
\newtheorem{prop}[theo]{Proposition}
\newtheorem{note}[theo]{Remark}
\newtheorem{lem}[theo]{Lemma}
\newtheorem{cor}[theo]{Corollary}
\newtheorem{definition}[theo]{Definition}
\newtheorem{assumption}{Assumption}

\title{Projector augmented-wave method: an analysis in a one-dimensional setting}
\author[1]{M.-S. Dupuy}
\affil[1]{Univ. Paris Diderot, Sorbonne Paris Cit\'e, Laboratoire Jacques-Louis Lions, UMR 7598, UPMC, CNRS, F-75205 Paris, France}

\renewcommand\Affilfont{\itshape\small}

\maketitle
\begin{abstract}
\begin{small}
In this article, a numerical analysis of the projector augmented-wave (PAW) method is presented, restricted to the case of dimension one with Dirac potentials modeling the nuclei in a periodic setting. The PAW method is widely used in electronic ab initio calculations, in conjunction with  pseudopotentials. It consists in replacing the original electronic Hamiltonian $H$ by a pseudo-Hamiltonian $H^{PAW}$ via the PAW transformation acting in balls around each nuclei. Formally, the new eigenvalue problem has the same eigenvalues as $H$ and smoother eigenfunctions. In practice, the pseudo-Hamiltonian $H^{PAW}$ has to be truncated, introducing an error that is rarely analyzed. In this paper, error estimates on the lowest PAW eigenvalue are proved for the one-dimensional periodic Schr\"odinger operator with double Dirac potentials.
\end{small}
\end{abstract}

\section*{Introduction}

In solid-state physics, to take advantage of the periodicity of the system, plane-wave methods are often the method of choice. However, Coulomb potentials located at each nucleus give rise to cusps on the eigenfunctions that impede the convergence rate of plane-wave expansions. Moreover, orthogonality to the core states implies fast oscillations of the valence state eigenfunctions that are difficult to approximate with plane-wave basis of moderate size. The PAW method \cite{blochl94} addresses both issues and has become a very popular tool over the years. It has been successfully implemented in different electronic structure simulation codes (ABINIT \cite{torrent2008337}, VASP \cite{kresse99}) and has been adapted to the computations of various chemical properties \cite{audouze2006projector,pickard2001all}. It relies on an invertible transformation acting locally around each nucleus, mapping solutions of an atomic wave function to a smoother and slowly varying function. Moreover, because of the particular form of the PAW transformation, it is possible to use pseudopotentials \cite{kleinman1982efficacious,troullier1991efficient} in a consistent way. Hence, the PAW eigenfunctions are smoother and because of the invertibility of the PAW transformation, the sought eigenvalues are the same. However, the theoretical PAW equations involve infinite expansions which have to be truncated in practice. Doing so, the PAW method introduces an error that is rarely analyzed. 

In this paper, the PAW method is applied to the one-dimensional double Dirac potential Hamiltonian whose eigenfunctions display a cusp at the location of the Dirac potentials that is reminiscent of the Kato cusp condition \cite{kato1957eigenfunctions}. Error estimates on the lowest PAW eigenvalue are proved for several choices of PAW parameters. The present analysis relies on some results on the variational PAW method (VPAW method) \cite{blanc2017, blanc2017vpaw1d} which is a slight modification of the original PAW method. Contrary to the PAW method, the VPAW generalized eigenvalue problem is in one-to-one correspondence with the original eigenvalue problem. By estimating the difference between the PAW and VPAW generalized eigenvalue problems, error estimates on the lowest PAW generalized eigenvalue are found.

\section{The PAW method in a one-dimensional setting}
\label{sec:setting}

A general overview of the VPAW and PAW methods for 3-D electronic Hamiltonians may be found in \cite{blanc2017} for the molecular setting and in \cite{blanc2017vpaw1d} for crystals. Here, the presentation of the VPAW and PAW methods is limited to the application to the 1-D periodic Schr\"odinger operator with double Dirac potentials. 

\subsection{The double Dirac potential Schr\"odinger operator}

We are interested in the lowest eigenvalue	 of the 1-D periodic Schr\"odinger operator $H$  on $L^2_{\mathrm{per}}(0,1) := \{ f \in L^2_{\mathrm{loc}} (\mathbb{R}) \ | \ f \text{ 1-periodic} \}$ with form domain $H^1_{\mathrm{per}}(0,1) := \{ f \in H^1_{\mathrm{loc}} (\mathbb{R}) \ | \ f \text{ 1-periodic}\}$:
\begin{equation}
H = -\frac{\mathrm{d}^2}{\mathrm{d}x^2} - Z_0 \sumlim{k \in \Z}{} \delta_k - Z_a \sumlim{k \in \Z}{} \delta_{k+a},
\label{eq:H_mol}
\end{equation}
where $0 < a < 1$, $Z_0, Z_a >0$. 

A mathematical analysis has been carried out in \cite{cances2017discretization}. There are two negative eigenvalues $E_0 = -\omega_0^2$ and $E_1 = -\omega_1^2$ which are given by the zeros of the function 
$$
f(\omega) = 2 \omega^2 (1- \cosh (\omega)) + (Z_0 + Z_a) \omega \sinh (\omega) - Z_0 Z_a \sinh(a \omega) \sinh((1-a)\omega).
$$
The corresponding eigenfunctions are 
\begin{equation*}
\psi_k(x) = 
\begin{cases}
A_{1,k} \cosh (\omega_k x) + B_{1,k} \sinh(\omega_k x) \ , \ 0 \leq x \leq a, \\
A_{2,k} \cosh (\omega_k x) + B_{2,k} \sinh(\omega_k x) \ , \ a \leq x \leq 1, \\
\end{cases}
\end{equation*}
where the coefficients $A_{1,k}$, $A_{2,k}$, $B_{1,k}$ and $B_{2,k}$ are determined by the continuity conditions and the derivative jumps at $0$ and $a$.

There is an infinity of positive eigenvalues $E_{k+2} = \omega_{k+2}^2$ which are given by the $k$-th zero of the function :
\begin{equation*}
f(\omega) = 2 \omega^2 (1- \cos (\omega)) + (Z_0 + Z_a) \omega \sin (\omega) + Z_0 Z_a \sin(a \omega) \sin((1-a)\omega),
\end{equation*}
and the corresponding eigenfunctions $H \psi_k = \omega_k^2 \psi_k$ are   
\begin{equation}
\psi_k(x) = 
\begin{cases}
A_{1,k} \cos ( \omega_k x) + B_{1,k} \sin( \omega_k x) \ , \ 0 \leq x \leq a, \\
A_{2,k} \cos ( \omega_k x) + B_{2,k} \sin( \omega_k x) \ , \ a \leq x \leq 1, \\
\end{cases}
\end{equation}
where again the coefficients $A_{1,k}$, $A_{2,k}$, $B_{1,k}$ and $B_{2,k}$ are determined by the continuity conditions and the derivative jumps at $0$ and $a$.
Notice that the eigenfunctions of $H$ have a first derivative jump that is similar to the Kato cusp condition satisfied by the solutions of 3D electronic Hamiltonian \cite{kato1957eigenfunctions}:
$$
\psi_k'(0_+) - \psi_k'(0_-) = -Z_0 \psi_k(0). 
$$

\subsection{The PAW method}
\label{sec:HPAW-pseudo}

\subsubsection{General principle}

The PAW method consists in replacing the original eigenvalue problem $H \psi = E\psi$ by the generalized eigenvalue problem
\begin{equation}
\label{eq:PAW_theorie}
(\mathrm{Id} + T^*) H (\mathrm{Id} + T)\widetilde{\psi} = E (\mathrm{Id} + T^*) (\mathrm{Id} + T)\widetilde{\psi},
\end{equation}
where $\mathrm{Id}+T$ is an invertible operator. It is clear that \eqref{eq:PAW_theorie} is equivalent to $H \psi = E \psi$ where $\psi = (\mathrm{Id}+T) \widetilde{\psi}$. 

The transformation $T$ is the sum of two operators acting in regions near the atomic sites that do not overlap (\emph{i.e.} $T_0 T_a = T_a T_0 = 0$)
$$
T = T_0 + T_a, \quad T_0 = \sumlim{i=0}{\infty} (\phi_i^0 - \widetilde{\phi}_i^0)\psh{\widetilde{p}_i^0}{\bm{\cdot}}, \quad T_a = \sumlim{i=0}{\infty} (\phi_i^a - \widetilde{\phi}_i^a)\psh{\widetilde{p}_i^a}{\bm{\cdot}},
$$
where $\psh{\bm{\cdot}}{\bm{\cdot}}$ denotes the $L^2_\mathrm{per}(0,1)$ scalar product. 

The atomic wave functions $(\phi_j^0)_{j \in \N}$ are solutions of an atomic eigenvalue problem
$$
H_0 \phi_j^0 := -\frac{\mathrm{d}^2\phi_j^0}{\mathrm{d}x^2} - Z_0 \sumlim{k \in \Z}{} \delta_k\phi_j^0   = \epsilon_j^0 \phi_j^0,
$$
and the pseudo wave functions $(\widetilde{\phi}_j^0)_{j \in \N}$ and the projector functions $(\widetilde{p}_j^0)_{j\in \N}$ satisfy the following conditions :
\begin{enumerate}
\item for each $j \in \N$, 
\begin{itemize}
\item for $x \in \R \setminus \bigcup\limits_{k \in \Z}[-\eta+k,\eta+k]$, $\widetilde{\phi}_j^0(x) = \phi_j^0(x)$;
\item $\widetilde{\phi}_j^0$ restricted to $\bigcup\limits_{k \in \Z}[-\eta+k,\eta+k]$ is a smooth function;
\end{itemize}

\item for each $j \in \N$, $\mathrm{supp} \ \widetilde{p}_j^0 \subset \bigcup\limits_{k \in \Z} [-\eta+k,\eta+k]$;
\item the families $(\widetilde{\phi}_j^0 |_{[-\eta,\eta]})_{j \in \N}$ and $(\widetilde{p}_j^0|_{[-\eta,\eta]})_{j\in \N}$ form a Riesz basis of $L^2(-\eta,\eta)$, \emph{i.e.} 
$$
\forall \, j,k \in \N, \ \itg{-\eta}{\eta}{\widetilde{p}_k^0(x)\widetilde{\phi}_j^0(x)}{x} = \delta_{kj},
$$
and for any $f \in L^2(-\eta,\eta)$, we have
\begin{equation}
\label{eq:PAW_completeness}
\sumlim{k=0}{\infty} \psh{\widetilde{p}_k^0}{f} \widetilde{\phi}_k^0(x) = f(x), \quad \text{for a.a. } x \in \bigcup\limits_{k \in \Z} [-\eta + k, \eta + k].
\end{equation}
\end{enumerate} 
Similarly,  $(\phi^a_i)_{i \in \N^*}$ are eigenfunctions of the operator $H_a = -\frac{\mathrm{d}^2}{\mathrm{d}x^2} - Z_0 \sumlim{k \in \Z}{} \delta_{a+k}  $, the pseudo wave functions $(\widetilde{\phi}^a_j)_{j \in \N^*}$ and the projector functions $(\widetilde{p}^a_j)_{j\in \N^*}$ are defined as above. 
\newline
 
The relation \eqref{eq:PAW_completeness} enables one to write the expression of $(\mathrm{Id}+T^*)H(\mathrm{Id}+T)$ and \linebreak $(\mathrm{Id}+T^*)(\mathrm{Id}+T)$ as
\begin{equation}
\label{eq:H_fullPAW}
(\mathrm{Id}+T^*) H (\mathrm{Id}+T) = H +  \sumlim{\substack{ i,j=0 \\ I = \lbrace 0, a \rbrace }}{\infty} \widetilde{p}^I_i 
\left( \psh{\phi_i^I}{H \phi_j^I} - \psh{\widetilde{\phi}^I_i}{H \widetilde{\phi}^I_j} \right) \psh{\widetilde{p}^I_j}{\bm{\cdot}} ,
\end{equation} 
and
\begin{equation}
\label{eq:S_fullPAW}
(\mathrm{Id}+T^*) (\mathrm{Id}+T) = \mathrm{Id} +  \sumlim{\substack{ i,j=0 \\ I = \lbrace 0, a \rbrace }}{\infty} \widetilde{p}^I_i
\left( \psh{\phi_i^I}{ \phi_j^I} - \psh{\widetilde{\phi}^I_i}{ \widetilde{\phi}^I_j} \right) \psh{\widetilde{p}^I_j}{\bm{\cdot}} .
\end{equation}

\subsubsection{Introduction of a pseudopotential}

A further modification is possible. As the pseudo wave functions $\widetilde{\phi}_i^0$ (resp. $\widetilde{\phi}_i^a$) are equal to $\phi_i^0$ (resp. $\phi_i^a$) outside $\bigcup\limits_{k \in \Z} [-\eta + k, \eta + k]$ (resp. $\bigcup\limits_{k \in \Z} [a-\eta + k, a+\eta + k]$), the integrals appearing in \eqref{eq:H_fullPAW} can be truncated to the interval $(-\eta,\eta)$ (resp. $(a-\eta,a+\eta)$). Doing so, another expression of $(\mathrm{Id}+T^*) H (\mathrm{Id}+T)$ can be obtained :
\begin{equation*}
(\mathrm{Id}+T^*) H (\mathrm{Id}+T) = H +  \sumlim{\substack{ i,j=0 \\ I = \lbrace 0, a \rbrace }}{\infty} \tilde{p}^I_i
\left( \psh{\phi_i^I}{H \phi_j^I}_{I,\eta} - \psh{\widetilde{\phi}^I_i}{H \widetilde{\phi}^I_j}_{I,\eta} \right) \psh{\tilde{p}^I_j}{\bm{\cdot}} ,
\end{equation*}
where 
\begin{equation*}
\psh{f}{g}_{I,\eta} =  
\begin{cases}
\itg{-\eta}{\eta}{f(x)g(x)}{x}, \quad & \text{when } I=0, \\
\itg{a-\eta}{a+\eta}{f(x)g(x)}{x}, \quad &  \text{when } I=a. 
\end{cases}
\end{equation*}
Using this expression of the operator $H^{PAW}$, it is possible to introduce a smooth 1-periodic potential $\chi_\epsilon = \sumlim{k \in \Z}{} \frac{1}{\epsilon} \chi \left( \frac{\cdot -k}{\epsilon} \right)$ with $\epsilon \leq \eta$, such that 
\begin{enumerate}
\item $\chi$ is a smooth nonnegative function with support $[-1,1]$ and $\itg{-1}{1}{\chi(x)}{x} = 1$;
\item $\chi_\epsilon \underset{\epsilon \to 0}{\longrightarrow} \sumlim{k \in \Z}{} \delta_k$ in $H^{-1}_\mathrm{per}(0,1)$.
\end{enumerate}
The potential $\chi_\epsilon$ will be called a \emph{pseudopotential} in the following. 
\\

The expression of $(\mathrm{Id}+T^*) H (\mathrm{Id}+T)$ becomes
\begin{equation}\label{eq:1}
(\mathrm{Id}+T^*) H (\mathrm{Id}+T) = H_\mathrm{ps} + \sumlim{\substack{ i,j=0 \\ I = \lbrace 0, a \rbrace }}{\infty} \widetilde{p}^I_i 
\left( \psh{\phi_i^I}{H \phi_j^I}_{I,\eta} - \psh{\widetilde{\phi}^I_i}{H_\mathrm{ps} \widetilde{\phi}^I_j}_{I,\eta} \right) \psh{\widetilde{p}^I_j}{\bm{\cdot}} ,
\end{equation}
with 
\begin{displaymath}
H_\mathrm{ps} =  \frac{\mathrm{d}^2}{\mathrm{d}x^2} - Z_0 \chi_\epsilon - Z_a \chi_\epsilon(\cdot -a).
\end{displaymath}

\subsection{The PAW method in practice}

In practice, the double sums appearing in the operators \eqref{eq:H_fullPAW}, \eqref{eq:S_fullPAW} and \eqref{eq:1} have to be truncated to some level $N$. Doing so, the identity $\psi = (\mathrm{Id}+T) \widetilde{\psi}$ is lost and the eigenvalues of the truncated equations are not equal to those of the original operator $H$ \eqref{eq:H_mol}. The PAW method introduces an error that will be estimated in the rest of paper. First, we define the PAW functions appearing in \eqref{eq:H_fullPAW}, \eqref{eq:S_fullPAW} and \eqref{eq:1}.

\subsubsection{Generation of the PAW functions}
\label{subsec:generation}

For the double Dirac potential Hamiltonian, the PAW functions are defined as follows.
\paragraph{Atomic wave functions $\phi_k^0$}
As mentioned earlier, the atomic wave functions $(\phi_k^0)_{1 \leq k \leq N}$ are eigenfunctions of the Hamiltonian $H_0$
$$
H_0 = -\frac{\mathrm{d}^2}{\mathrm{d}x^2} - Z_0 \sumlim{k \in \Z}{} \delta_k .
$$
By parity, each eigenfunction of this operator is either even or odd. The odd eigenfunctions are in fact $x \mapsto \sin (2 \pi k x)$ and the even ones are the $1$-periodic functions such that
$$
\begin{cases}
\phi_0^0(x) := \cosh(\omega_0 (x - \tfrac{1}{2})) & \text{for } x \in [0,1], \\
\phi_k^0(x) := \cos(\omega_k (x-\tfrac{1}{2}) ) & \text{for } x \in [0,1], \ k \in \N^*,
\end{cases}
$$
In the sequel (and in particular in \eqref{eq:H_N} and \eqref{eq:H_PAW_pseudo} below), only the non-smooth thus even eigenfunctions $(\phi_i^0)_{1 \leq i \leq N}$ are selected. The corresponding eigenvalues are denoted by $(\epsilon_i^0)_{1 \leq i \leq N}$:
$$
H_0 \phi_i^0 = \epsilon_i^0 \phi_i^0.
$$

\paragraph{Pseudo wave function $\widetilde{\phi}^0_i$}
The pseudo wave functions $(\widetilde{\phi}_i^0)_{1 \leq i \leq N} \in \left(H^1_\mathrm{per}(0,1)\right)^N$ are defined as follows:
\begin{enumerate}
\item for $x \notin \bigcup\limits_{k \in \Z} [-\eta + k, \eta+k]$, $\widetilde{\phi}_i^0(x) = \phi_i^0(x)$.
\item for $x \in \bigcup\limits_{k \in \Z} [-\eta + k, \eta+k]$, $\widetilde{\phi}^0_i$ is an even polynomial of degree at most $2d-2$, $d \geq N$.  
\item $\widetilde{\phi}^0_i$ is $C^{d-1}$ at $\eta$ \emph{i.e.} $(\widetilde{\phi}_i^0)^{(k)}(\eta) = (\phi_i^0)^{(k)}(\eta)$ for $0 \leq k \leq d-1$.
\end{enumerate}

\paragraph{Projector functions $\widetilde{p}_i^0$}
Let $\rho$ be a positive, smooth function with support included in $[-1,1]$ and  $\rho_\eta(t) = \sumlim{k\in \Z}{} \rho\left(\tfrac{t-k}{\eta}\right)$. The projector functions $(\widetilde{p}_i^0)_{1 \leq i \leq N}$ are obtained by an orthogonalization procedure from the functions $p_i^0(t) = \rho_\eta (t) \widetilde{\phi}_i^0(t)$ in order to satisfy the duality condition :
$$
\psh{\widetilde{p}_i^0}{\widetilde{\phi}_j^0} = \delta_{ij} .
$$
More precisely, the matrix $B_{ij} := \psh{p_i^0}{\widetilde{\phi}_j^0}$ is computed and inverted to obtain the projector functions
$$
\widetilde{p}_k^0 = \sumlim{j=1}{N} (B^{-1})_{kj} p_j^0. 
$$
The matrix $B$ is the Gram matrix of the functions $(\widetilde{\phi}_j^0)_{1\leq j \leq N}$ for the weight $\rho_\eta$. The orthogonalization is possible only if the family $(\widetilde{\phi}_i^0)_{1 \leq i \leq N}$ is linearly independent - thus necessarily $d \geq N$.

\subsubsection{The eigenvalue problems}

For the case \emph{without} pseudopotentials, the PAW eigenvalue problem is given by
\begin{equation}
\label{eq:PAW_eig_pb}
H^N f = E^{(\eta)} S^N f,
\end{equation}
where
\begin{equation}
\label{eq:H_N}
H^N = H + \sumlim{\substack{ i,j=1 \\ I = \lbrace 0, a \rbrace }}{N} \widetilde{p}^I_i 
\left( \psh{\phi_i^I}{H \phi_j^I} - \psh{\widetilde{\phi}^I_i}{H \widetilde{\phi}^I_j} \right) \psh{\widetilde{p}^I_j}{\bm{\cdot}} ,
\end{equation} 
and
\begin{equation}
\label{eq:S_N}
S^N = \mathrm{Id} + \sumlim{\substack{ i,j=1 \\ I = \lbrace 0, a \rbrace }}{N} \widetilde{p}^I_i
\left( \psh{\phi_i^I}{ \phi_j^I} - \psh{\widetilde{\phi}^I_i}{ \widetilde{\phi}^I_j} \right) \psh{\widetilde{p}^I_j}{\bm{\cdot}} .
\end{equation}
The practical interest in solving the eigenvalue problem \eqref{eq:PAW_eig_pb} is very limited since this version of the PAW method does not remove the singularity caused by the Dirac potentials. The next eigenvalue problem where the Dirac potentials are replaced by smoother potentials is closer to the implementation of the PAW method in practice.
\newline

For the case \emph{with} pseudopotentials, the PAW eigenvalue problem becomes
\begin{equation}
\label{eq:PAW_eig_pb_pseudo}
H^{PAW} f = E^{PAW} S^{PAW} f,
\end{equation}
where
\begin{equation}
\label{eq:H_PAW_pseudo}
H^{PAW} = H_\mathrm{ps} + \sumlim{\substack{ i,j=1 \\ I = \lbrace 0, a \rbrace }}{N} \widetilde{p}_i^I \left( \psh{\phi_i^I}{H \phi_j^I} - \psh{\widetilde{\phi}_i^I}{H_\mathrm{ps} \widetilde{\phi}_j^I} \right) \psh{\widetilde{p}_j^I}{\bm{\cdot}} ,
\end{equation} 
and
\begin{equation}
\label{eq:S_PAW_pseudo}
S^{PAW} = S^N = \mathrm{Id} + \sumlim{\substack{ i,j=1 \\ I = \lbrace 0, a \rbrace }}{N} \widetilde{p}_i^I \left( \psh{\phi_i^I}{\phi_j^I} - \psh{\widetilde{\phi}_i^I}{\widetilde{\phi}_j^I} \right) \psh{\widetilde{p}_j^I}{\bm{\cdot}} .
\end{equation}

If the projector functions $(\widetilde{p}_i)_{1 \leq i \leq N}$ are smooth, then the eigenfunctions $f$ in \eqref{eq:PAW_eig_pb_pseudo} are smooth as well, and their plane-wave expansions converge very quickly. Thus, if the difference $|E^{PAW}-E|$ is smaller than a desired accuracy, it is more interesting to solve \eqref{eq:PAW_eig_pb_pseudo} than the original eigenvalue problem. However, an estimate on the difference $|E^{PAW}-E|$ is needed in order to justify the use of the PAW method. To the best of our knowledge, there exists no estimation of this error except a heuristic analysis in the seminal work of Bl\"ochl (\cite{blochl94}, Sections VII.B and VII.C). However, his analysis relies on an expansion of the eigenvalue in $f - \sumlim{i=1}{N} \psh{\widetilde{p}_i}{f} \widetilde{\phi}_i$ which goes to $0$ if the families $(\widetilde{p}_i)_{i \in \N^*}$ and $(\widetilde{\phi}_i)_{i \in \N^*}$ form a Riesz basis, but a convergence rate of the expansion in the Riesz basis is not given. Moreover the inclusion of a pseudopotential in the PAW treatment is not taken into account. 
\newline

The goal of this paper is to provide error estimates on the lowest PAW eigenvalue of problems \eqref{eq:PAW_eig_pb} and \eqref{eq:PAW_eig_pb_pseudo}.  To prove this result, the PAW method is interpreted as a perturbation of the VPAW method introduced in \cite{blanc2017,blanc2017vpaw1d} which has the same eigenvalues as the original problem. In the following, when we refer to the PAW method, it will be to the truncated equations \eqref{eq:PAW_eig_pb} or \eqref{eq:PAW_eig_pb_pseudo}.

\subsection{The VPAW method}
\label{subsec:VPAW1D}

The analysis of the PAW method relies on the connexion between the VPAW and the PAW methods. A brief description of the VPAW method is given in this subsection.

Like the PAW method, the principle of the VPAW method consists in replacing the original eigenvalue problem
$$
H\psi = E\psi,
$$
by the generalized eigenvalue problem:
\begin{equation}
(\mathrm{Id}+T_N^*) H (\mathrm{Id}+T_N) \widetilde{\psi} = E (\mathrm{Id}+T_N) (\mathrm{Id}+T_N) \widetilde{\psi},
\label{eq:H_VPAW_eig_pb}
\end{equation}
where $\mathrm{Id}+T_N$ is an invertible operator. Thus both problems have the same eigenvalues and it is straightforward to recover the eigenfunctions of the former from the generalized eigenfunctions of the latter:
\begin{equation*}
\psi = (\mathrm{Id}+T_N) \widetilde{\psi}.
\end{equation*}

Again, $T_N$ is the sum of two operators acting near the atomic sites 
\begin{equation}
\label{eq:T_N}
T_N = T_{0,N} + T_{a,N}.
\end{equation}
To define $T_{0,N}$, we fix an integer $N$ and a radius $0 < \eta < \min(\frac{a}{2}, \frac{1-a}{2})$ so that $T_{0,N}$ and $T_{a,N}$ act on two disjoint regions $\bigcup\limits_{k \in \Z} [-\eta+k,\eta+k]$ and $\bigcup\limits_{k \in \Z} [a-\eta+k,a+\eta+k]$ respectively. 

The operators $T_{0,N}$ and $T_{a,N}$ are given by
\begin{equation}
\label{eq:T0}
T_{0,N} = \sumlim{i=1}{N} (\phi_{i}^0 - \widetilde{\phi}_i^0) \psh{\widetilde{p}_i^0}{\bm{\cdot}}, \qquad T_{a,N} = \sumlim{i=1}{N} (\phi_{i}^a - \widetilde{\phi}_i^a) \psh{\widetilde{p}_i^a}{\bm{\cdot}} \ ,
\end{equation}
with the same functions $\phi_i^I$, $\widetilde{\phi}_i^I$ and $\widetilde{p}^I_i$, $I=0,a$ as in Section \ref{sec:HPAW-pseudo}. The only difference with the PAW method is that the sums appearing in \eqref{eq:T0} are \emph{finite}, thereby avoiding a truncation error. 

In the following, the VPAW operators are denoted by
\begin{equation}
\label{eq:H_VPAW}
\widetilde{H} = (\mathrm{Id} + T_N^*) H (\mathrm{Id} + T_N),
\end{equation}
and 
\begin{equation}
\label{eq:S_VPAW}
\widetilde{S} = (\mathrm{Id} + T_N^*) (\mathrm{Id} + T_N),
\end{equation}
\newline

A full analysis of the VPAW method can be found in \cite{blanc2017vpaw1d}. In this paper, we proved that the cusps at $0$ and $a$ of the eigenfunctions $\widetilde{\psi}$ are reduced by a factor $\eta^{2N}$ but the $d$-th derivative jumps introduced by the pseudo wave functions $\widetilde{\phi}_k$ blow up as $\eta$ goes to 0 at the rate $\eta^{1-d}$. Using Fourier methods to solve \eqref{eq:H_VPAW_eig_pb}, we observe an acceleration of convergence that can be tuned by the VPAW parameters $\eta$ -the cut-off radius- $N$ -the number of PAW functions used at each site- and $d$ -the smoothness of the PAW pseudo wave functions. 

\section{Main results}
\label{sec:results}

The PAW method is well-posed if the projector functions $(\widetilde{p}_i^I)_{1 \leq i \leq N}$ are well-defined. This question has already been addressed in \cite{blanc2017vpaw1d} where it is shown that we simply need to take $\eta < \eta_0$ for some positive $\eta_0$. 

\begin{assumption}
\label{assump:1}
Let $\eta_0 > 0$ such that for all $0 < \eta < \eta_0$, the projector functions $(\widetilde{p}_i)_{1 \leq i \leq N}$ in Section  \ref{subsec:generation} are well-defined.
\end{assumption}

Moreover since the analysis of the PAW error requires the VPAW method to be well-posed, the matrix $\left(\psh{\widetilde{p}_j^I}{\phi_k^I}\right)_{1 \leq j,k \leq N}$ is assumed to be invertible for $0 < \eta \leq \eta_0$. 

\begin{assumption}
\label{assump:2}
For all $0 < \eta < \eta_0$, the matrix  $\left(\psh{\widetilde{p}_j^I}{\phi_k^I}\right)_{1 \leq j,k \leq N}$ is invertible. 
\end{assumption}

Under these assumptions, the following theorems are established. Proofs  are gathered in Section \ref{sec:proof}.

\subsection{PAW method without pseudopotentials}

\begin{theo}
\label{thm:bound_trunc_PAW}
Let $\phi_i^I$, $\widetilde{\phi}_i^I$ and $\widetilde{p}_i^I$, for $i=1,\dots,N$ and $I=0,a$ be the functions defined in Section \ref{subsec:generation}. Suppose $\eta_0 > 0$ satisfies Assumption \ref{assump:1} and Assumption \ref{assump:2}. Let $E^{(\eta)}$ be the lowest eigenvalue of the generalized eigenvalue problem \eqref{eq:PAW_eig_pb}. Let $E_0$ be the lowest eigenvalue of $H$ \eqref{eq:H_mol}. Then there exists a positive constant $C$ independent of $\eta$ such that for all $0 < \eta \leq \eta_0$
\begin{equation}
\label{eq:thm_PAW_sans_pseudo}
-C \eta \leq E^{(\eta)} - E_0 \leq C \eta^{2N}.
\end{equation}
\end{theo}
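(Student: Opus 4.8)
The plan is to use the fact, recalled in Section~\ref{subsec:VPAW1D}, that the VPAW generalized eigenvalue problem \eqref{eq:H_VPAW_eig_pb} is in one-to-one correspondence with $H\psi = E\psi$, so that its lowest generalized eigenvalue is \emph{exactly} $E_0$, attained at $\widetilde\psi_0 := (\mathrm{Id}+T_N)^{-1}\psi_0$, where $\psi_0$ is the ground state of $H$. Since $\mathrm{Id}+T_N$ is invertible, $\widetilde S$ in \eqref{eq:S_VPAW} is symmetric positive definite and $E_0 = \min_u \langle u,\widetilde H u\rangle/\langle u,\widetilde S u\rangle$; in particular $\widetilde H - E_0 \widetilde S \succeq 0$. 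The coefficient matrices in \eqref{eq:H_N}--\eqref{eq:S_N} are symmetric, so $H^N$ and $S^N$ are self-adjoint, and for $\eta$ small enough that $S^N$ is positive definite (which I would check first, using $S^N \to \mathrm{Id}$ as $\eta \to 0$) one has $E^{(\eta)} = \min_f \langle f, H^N f\rangle/\langle f, S^N f\rangle$. The whole theorem is then a comparison of these two Rayleigh quotients.

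Both bounds reduce to estimating the single operator $M := (H^N - E_0 S^N) - (\widetilde H - E_0\widetilde S)$, which by the completeness relation \eqref{eq:PAW_completeness} would vanish if the sums in \eqref{eq:H_N}--\eqref{eq:S_N} were not truncated; thus $M$ is a pure truncation residual, built from the rank-one blocks $\widetilde p_i^I(\cdots)\langle\widetilde p_j^I,\cdot\rangle$, $(\phi_i^I-\widetilde\phi_i^I)\langle\widetilde p_i^I,\cdot\rangle$ and their adjoints. For the lower bound I would write, for the PAW ground state $f^{(\eta)}$, $\langle f^{(\eta)}, H^N f^{(\eta)}\rangle - E_0\langle f^{(\eta)}, S^N f^{(\eta)}\rangle = \langle f^{(\eta)}, (\widetilde H - E_0\widetilde S) f^{(\eta)}\rangle + \langle f^{(\eta)}, M f^{(\eta)}\rangle \ge \langle f^{(\eta)}, M f^{(\eta)}\rangle$, so that $E^{(\eta)} - E_0 \ge \langle f^{(\eta)}, M f^{(\eta)}\rangle / \langle f^{(\eta)}, S^N f^{(\eta)}\rangle$. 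It then suffices to prove the \emph{uniform} form bound $|\langle f, M f\rangle| \le C\eta\,\langle f, S^N f\rangle$ for all $f \in H^1_{\mathrm{per}}(0,1)$. This is where the $\eta$-scalings of the building blocks enter (one finds $\|\widetilde p_i^I\|_{L^2} \sim \eta^{-1/2}$ and $\|\phi_i^I - \widetilde\phi_i^I\|_{L^2} \sim \eta^{3/2}$, together with boundedness of the coefficient differences), and where the unbounded operator $H$ must be handled through its quadratic form $\int u'v' - Z_0 u(0)v(0) - Z_a u(a)v(a)$ rather than pointwise, since terms such as $H(\phi_i^I - \widetilde\phi_i^I)$ are distributions.

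For the sharp upper bound I would instead test the PAW Rayleigh quotient with the \emph{smooth} function $\widetilde\psi_0$. Because $\widetilde H \widetilde\psi_0 = E_0 \widetilde S\widetilde\psi_0$, the VPAW part drops out and
\[
E^{(\eta)} - E_0 \le \frac{\langle\widetilde\psi_0, (H^N - E_0 S^N)\widetilde\psi_0\rangle}{\langle\widetilde\psi_0, S^N\widetilde\psi_0\rangle} = \frac{\langle\widetilde\psi_0, M\widetilde\psi_0\rangle}{\langle\widetilde\psi_0, S^N\widetilde\psi_0\rangle}.
\]
The point is that the \emph{same} residual $M$, which is only $O(\eta)$ as a form in general, is $O(\eta^{2N})$ when evaluated on $\widetilde\psi_0$. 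To extract this I would use the identity $(H - E_0)(\mathrm{Id}+T_N)\widetilde\psi_0 = (H-E_0)\psi_0 = 0$ to rewrite $\langle\widetilde\psi_0, M\widetilde\psi_0\rangle$ in terms of the completeness tails $\widetilde\psi_0 - \sum_{k=1}^N \langle\widetilde p_k^I,\widetilde\psi_0\rangle\widetilde\phi_k^I$ of \eqref{eq:PAW_completeness}, and then invoke the VPAW estimate from \cite{blanc2017vpaw1d} that such tails, evaluated on a function whose cusp has already been reduced by $\eta^{2N}$, are themselves $O(\eta^{2N})$. The denominator is bounded below uniformly since $S^N \to \mathrm{Id}$.

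The main obstacle is this last cancellation. Individually the coefficients $\langle\phi_i^I,(H-E_0)\phi_j^I\rangle - \langle\widetilde\phi_i^I,(H-E_0)\widetilde\phi_j^I\rangle$ and the cross terms in $M$ are only $O(1)$, so the gain to $\eta^{2N}$ cannot come from naive size estimates: it must be read off from the exact algebraic collapse that the completeness relation produces in the untruncated case, made quantitative through the $\eta^{2N}$ decay of the truncated completeness relation on smooth arguments. A secondary difficulty, already mentioned, is carrying the unbounded Dirac terms through all of these manipulations at the level of forms, and confirming, uniformly in $0 < \eta \le \eta_0$, the positive-definiteness of $S^N$ and the lower bound on $\langle\widetilde\psi_0, S^N\widetilde\psi_0\rangle$ needed to divide.
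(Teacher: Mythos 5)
Your strategy is essentially the paper's: the residual you call $M$ is exactly what Proposition \ref{prop:H_N-widetilde-H} computes in quadratic-form language, the upper bound is obtained by testing with the VPAW ground state $\widetilde{\psi}$ so that the VPAW estimates (Lemma \ref{lem:approximation}, inherited from \cite{blanc2017vpaw1d}) turn the residual into an $\mathcal{O}(\eta^{2N})$ quantity, and the lower bound comes from an $\mathcal{O}(\eta)$ bound on the residual together with $S^N \approx \mathrm{Id}$. However, your lower bound contains a genuine gap: the claimed uniform form bound $\left| \langle f, M f \rangle \right| \leq C \eta \, \langle f, S^N f \rangle$ for all $f \in H^1_{\mathrm{per}}(0,1)$ is false. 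Since $\langle f, S^N f\rangle$ is comparable to $\|f\|_{L^2_{\mathrm{per}}}^2$ (item 5 of Lemma \ref{lem:widetilde_S_L2_bound}), you are asking $M$ to be $L^2$-form-bounded of size $\eta$; but $M$ contains the cross terms $\langle \langle\widetilde{p}^I,f\rangle^T(\Phi_I - \widetilde{\Phi}_I), H(\,\cdot\,)\rangle$ whose quadratic form involves $\int (\,\cdot\,)' f'$, so the best available estimate is $C\eta \|f\|_{H^1_{\mathrm{per}}}^2$ (this is precisely \eqref{eq:terme_en_plus} in the paper); an oscillatory $f$ localized in $(-\eta,\eta)$ makes the ratio $\langle f, Mf\rangle / \|f\|_{L^2_{\mathrm{per}}}^2$ blow up. Consequently dividing by $\langle f^{(\eta)}, S^N f^{(\eta)}\rangle$ does not close the argument: you need in addition a uniform-in-$\eta$ bound $\|f^{(\eta)}\|_{H^1_{\mathrm{per}}} \leq C$ for the $L^2$-normalized PAW ground state. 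This is the paper's Lemma \ref{lem:H1_boundedness_f}, and it requires its own coercivity argument ($\alpha \langle f,f\rangle + \langle f, H^N f\rangle \geq C \|f\|^2_{H^1_{\mathrm{per}}}$ for $\eta$ small, then item 5 of Lemma \ref{lem:widetilde_S_L2_bound} to replace $\langle f, S^N f\rangle$ by $\langle f, f\rangle$). The gap is fixable, but the missing a priori bound is a substantive ingredient, not a formality.

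A secondary imprecision concerns the upper bound: the tail estimates of Lemma \ref{lem:approximation} hold only for the \emph{even} parts $\widetilde{\psi}_e$ and $E_0\psi_e$, while the odd part of $\psi$ near $0$ is merely $\mathcal{O}(\eta)$. Your claimed $\mathcal{O}(\eta^{2N})$ collapse of the residual on $\widetilde{\psi}$ therefore needs a parity cancellation which your sketch does not mention: the other factor $\langle\widetilde{p}^I,\widetilde{\psi}\rangle^T(\Phi_I - \widetilde{\Phi}_I)$ is even and supported in the symmetric interval, so the odd contributions integrate to zero. This is not a pedantic point — it is exactly the mechanism that breaks once the pseudopotential $\delta V$ enters, degrading the upper bound to $\mathcal{O}(\eta^2)$ in Theorem \ref{thm:E_PAW_bound}, so any complete proof must make it explicit.
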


The constant $C$ appearing in \eqref{eq:thm_PAW_sans_pseudo} (and in the theorems that will follow) depends on the other PAW parameters $N$ and $d$ in a nontrivial way. The upper bound is proved by using the VPAW eigenfunction $\widetilde{\psi}$ associated to the lowest eigenvalue $E_0$ for which we have precise estimates of the difference between the operators $H^{PAW}$ and $\widetilde{H}$. As expected (and confirmed by numerical simulations in Section \ref{subsec:PAW_simu_sans_pseudo}) the PAW method without pseudopotentials is \emph{not} variational. Moreover as the Dirac delta potentials are not removed, Fourier methods applied to the eigenvalue problem \eqref{eq:PAW_eig_pb} converge slowly. 

\subsection{PAW method with pseudopotentials}

The following theorems are stated for $\epsilon = \eta$, \emph{i.e.} when the support of the pseudopotential is equal to the acting region of the PAW method. Indeed, in the proof of Theorem \ref{thm:E_PAW_bound}, it appears  that worse estimates are obtained when a pseudopotential $\chi_\epsilon$ with $\epsilon < \eta$ is used.  

\begin{theo}
\label{thm:E_PAW_bound}
Let $\phi_i^I$, $\widetilde{\phi}_i^I$ and $\widetilde{p}_i^I$, for $i=1,\dots,N$ and $I=0,a$ be the functions defined in Section \ref{subsec:generation}. Suppose $\eta_0 > 0$ satisfies Assumption \ref{assump:1} and Assumption \ref{assump:2}.
Let $E^{PAW}$ the lowest eigenvalue of the generalized eigenvalue problem \eqref{eq:PAW_eig_pb_pseudo}. Let $E_0$ be the lowest eigenvalue of $H$ \eqref{eq:H_mol}. Then there exists a positive constant $C$ independent of $\eta$ such that for all $0 < \eta \leq \eta_0$
\begin{equation}
- C \eta \leq E^{PAW} - E_0 \leq C \eta^2.
\end{equation}
\end{theo}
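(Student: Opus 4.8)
The plan is to treat the PAW eigenvalue problem \eqref{eq:PAW_eig_pb_pseudo} as a perturbation of the corresponding VPAW problem \eqref{eq:H_VPAW_eig_pb}, whose lowest eigenvalue is \emph{exactly} $E_0$. The key observation is that $H^{PAW}$ and $\widetilde H$ differ in two ways: first, the true Dirac potentials in the correction terms of $\widetilde H$ are replaced by the smooth pseudopotential $H_{\mathrm{ps}}$, and second, the full VPAW operator $\widetilde H = (\mathrm{Id}+T_N^*)H(\mathrm{Id}+T_N)$ is expanded via the completeness relation \eqref{eq:PAW_completeness}, but in the truncated PAW operator $H^{PAW}$ this completeness is only approximate because the sum is cut at $N$. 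I would therefore write, for any fixed trial function $f$, the difference $\psh{f}{(H^{PAW}-\widetilde H)f}$ and the analogous difference $\psh{f}{(S^{PAW}-\widetilde S)f}$ as explicit quantities, then feed these into the Rayleigh quotient characterization of the lowest generalized eigenvalue.

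For the \textbf{upper bound} $E^{PAW}-E_0 \leq C\eta^2$, I would use the VPAW ground eigenfunction $\widetilde\psi$ (normalized so that $\psh{\widetilde\psi}{\widetilde S \widetilde\psi}=1$) as a trial function in the generalized Rayleigh quotient
\begin{equation*}
E^{PAW} \leq \frac{\psh{\widetilde\psi}{H^{PAW}\widetilde\psi}}{\psh{\widetilde\psi}{S^{PAW}\widetilde\psi}}.
\end{equation*}
Since $\psh{\widetilde\psi}{\widetilde H \widetilde\psi}=E_0$ and $\psh{\widetilde\psi}{\widetilde S\widetilde\psi}=1$, a first-order expansion of the quotient gives $E^{PAW}-E_0 \lesssim \bigl|\psh{\widetilde\psi}{(H^{PAW}-\widetilde H)\widetilde\psi}\bigr| + |E_0|\,\bigl|\psh{\widetilde\psi}{(S^{PAW}-\widetilde S)\widetilde\psi}\bigr|$ up to higher-order terms. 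The dominant contribution comes from replacing the Dirac potentials by $\chi_\eta$ inside the region $[-\eta,\eta]$: since $\widetilde\psi$ is bounded and $\chi_\eta$ carries unit mass concentrated on an interval of width $2\eta$, the difference $\langle \widetilde\phi_i,(Z_0\delta_0 - Z_0\chi_\eta)\widetilde\phi_j\rangle$ scales like the variation of the smooth $\widetilde\phi_i\widetilde\phi_j$ across the support, yielding an $\cO(\eta)$ pointwise discrepancy but an $\cO(\eta^2)$ contribution once one exploits that $\chi$ has unit integral so the leading $\cO(\eta)$ term cancels (a Taylor expansion of the smooth integrand around the nucleus, with the first moment killed by the mass-one normalization). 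This cancellation is precisely why $\epsilon=\eta$ is chosen and why the exponent improves from $\eta$ to $\eta^2$.

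For the \textbf{lower bound} $E^{PAW}-E_0 \geq -C\eta$, I would argue in the reverse direction, bounding $\psh{f}{H^{PAW}f}$ from below and $\psh{f}{S^{PAW}f}$ from above over all admissible $f$, or equivalently controlling $\|H^{PAW}-\widetilde H\|$ and $\|S^{PAW}-\widetilde S\|$ as operators on the relevant finite-dimensional correction space, together with a lower bound on the spectral gap of $\widetilde S$ to ensure the denominator stays bounded away from zero. Here the weaker $\eta$ rate (rather than $\eta^2$) reflects that the favourable first-moment cancellation is not available uniformly over all trial functions, only for the specific smooth eigenfunction $\widetilde\psi$; the generic bound on the pseudopotential substitution error is $\cO(\eta)$.

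The main obstacle I anticipate is controlling the $\eta$-dependence of the projector functions $\widetilde p_i^I$ and of the correction coefficients uniformly as $\eta\to 0$. As noted in the VPAW analysis recalled in Section \ref{subsec:VPAW1D}, the $d$-th derivative jumps of the pseudo wave functions blow up like $\eta^{1-d}$, so the operators $T_{0,N}, T_{a,N}$ and the Gram-type matrix $B$ degenerate as $\eta$ shrinks; Assumptions \ref{assump:1} and \ref{assump:2} guarantee invertibility but one must extract \emph{quantitative} bounds (on $\|\widetilde p_i^I\|$, on $B^{-1}$, and on the correction matrices $\psh{\phi_i^I}{H\phi_j^I}-\psh{\widetilde\phi_i^I}{H_{\mathrm{ps}}\widetilde\phi_j^I}$) with constants depending on $N$ and $d$ but not on $\eta$. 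Establishing these uniform-in-$\eta$ estimates, and verifying that the $\cO(\eta)$ leading term genuinely cancels for the upper bound while it does not for the lower bound, is where the real work lies; the Rayleigh-quotient perturbation argument itself is then routine.
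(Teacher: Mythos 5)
Your global strategy (perturbing the VPAW problem and plugging the VPAW ground state $\widetilde{\psi}$ into the generalized Rayleigh quotient for the upper bound) coincides with the paper's, but both of your key mechanisms have genuine gaps. For the upper bound, the cancellation you invoke does not exist: the mass-one normalization of $\chi$ kills only the zeroth moment, so for a generic smooth $g$ one has $\int \chi_\eta(x)\bigl(g(x)-g(0)\bigr)\,\mathrm{d}x = g'(0)\int x\,\chi_\eta(x)\,\mathrm{d}x + \cO(\eta^2) = \cO(\eta)$, since nothing forces the first moment of $\chi$ to vanish ($\chi$ is not assumed even). The matrix elements $\psh{\widetilde{\phi}_i}{(\delta_0-\chi_\eta)\widetilde{\phi}_j}$ are indeed $\cO(\eta^2)$, but because the $\widetilde{\phi}_i$ are \emph{even} polynomials, not because of the normalization --- and in any case they are not the dominant error. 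The paper's proof (via Proposition \ref{prop:H_PAW_formquad}) shows the critical term is the quadratic residual $\psh{r}{\delta V r}_{I,\eta}$ with $r = \widetilde{\psi} - \psh{\widetilde{p}^I}{\widetilde{\psi}}^T\widetilde{\Phi}_I$: its even part is $\cO(\eta^{2N})$ by Lemma \ref{lem:approximation}, while its odd part equals the odd part $\psi_o$ of the exact eigenfunction, which the even PAW functions do not capture at all; since $\psi_o(0)=0$ and $|\psi_o|\leq C\eta$ on $(-\eta,\eta)$, the term $\int\chi_\eta|\psi_o|^2$ is exactly $\cO(\eta^2)$. This even/odd splitting --- absent from your argument --- is the real reason the exponent is $2$ rather than $2N$, as confirmed by Theorem \ref{thm:E_PAW_improved_bound}, where adding odd PAW functions restores the $\eta^{2N}$ rate.

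For the lower bound, your plan of controlling $\|H^{PAW}-\widetilde{H}\|$ and $\|S^{PAW}-\widetilde{S}\|$ uniformly over trial functions cannot produce $-C\eta$: for a generic $f\in H^1_{\mathrm{per}}(0,1)$ the residual $f - \psh{\widetilde{p}^I}{f}^T\widetilde{\Phi}_I$ is only $\cO(1)$ in sup norm (Lemma \ref{lem:widetilde_p_stuff}), so the pseudopotential quadratic term, whose kernel has amplitude $1/\eta$ on an interval of width $\eta$, is merely $\cO(1)$. The paper's argument is eigenfunction-specific: it first bounds $E^{PAW}$ from below (Lemma \ref{lem:E_PAW_bounded_below}), then bootstraps the PAW differential equation to obtain derivative bounds $\|f^{(k)}\|_{\infty,\eta,I}\leq C(\eta^{1-k}+\epsilon^{1-k})\|f\|_{\infty,\eta,I}$ for the PAW eigenfunction (Lemma \ref{lem:der_f_paw}), and converts these via Taylor expansion into the best-approximation estimates $\|f_e-\psh{\widetilde{p}^I}{f}^T\widetilde{\Phi}_I\|_{\infty,\eta,I}\leq C\eta\|f\|_{\infty,\eta,I}$ and its derivative analogue (Lemmas \ref{lem:f_paw_approx} and \ref{lem:f_paw_approx2}); it is these pointwise estimates, not operator norms, that yield $-C\eta$. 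Incidentally, the ratios $(\eta/\epsilon)^{2N-1}$ in those lemmas are also the actual reason for choosing $\epsilon=\eta$ (a smaller $\epsilon$ degrades the estimates), not the cancellation you describe. Your closing paragraph correctly anticipates that uniform-in-$\eta$ quantitative bounds are where the work lies, but the two missing ideas are precisely the even/odd decomposition of the residual for the upper bound and the bootstrap regularity of the PAW eigenfunction for the lower bound.
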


Introducing a pseudopotential in $H^{PAW}$ worsens the upper bound on the PAW eigenvalue. This is due to our construction of the PAW method in Section \ref{sec:HPAW-pseudo} where only even PAW functions are considered. Incorporating odd PAW functions in the PAW treatment, it is possible to improve the upper bound on the PAW eigenvalue and recover the bound in Theorem \ref{thm:bound_trunc_PAW} (see Section \ref{subsec:odd_PAW}). 

As the cut-off radius $\eta$ goes to $0$, the lowest eigenvalue of the truncated PAW equations is closer to the exact eigenvalue. This is also observed in different implementations of the PAW method and is in fact one of the main guidelines: a small cutoff radius yields more accurate results \cite{jollet2014generation,holzwarth2001projector}. 

\begin{theo}
\label{thm:E_PAW_M}
Let $\phi_i^I$, $\widetilde{\phi}_i^I$ and $\widetilde{p}_i^I$, for $i=1,\dots,N$ and $I=0,a$ be the functions defined in Section \ref{subsec:generation}. Suppose $\eta_0 > 0$ satisfies Assumption \ref{assump:1} and Assumption \ref{assump:2}.
Let $E_M^{PAW}$ be the lowest eigenvalue of the variational approximation of \eqref{eq:PAW_eig_pb_pseudo}, with $H^{PAW}$ given by  \eqref{eq:H_PAW_pseudo} in a basis of $M$ plane waves. Let $E_0$ be the lowest eigenvalue of $H$ \eqref{eq:H_mol}. There exists a positive constant $C$ independent of $\eta$ and $M$ such that for all $0 < \eta < \eta_0$ and for all $n \in \N^*$
$$
\left| E_M^{PAW} - E_0 \right| \leq C \left( \eta + \frac{\eta^2}{(\eta M)^n} \right).
$$
\end{theo}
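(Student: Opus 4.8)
The plan is to combine Theorem~\ref{thm:E_PAW_bound}, which controls the error between the exact eigenvalue $E_0$ and the eigenvalue $E^{PAW}$ of the continuous (infinite-dimensional) PAW problem \eqref{eq:PAW_eig_pb_pseudo}, with a separate estimate quantifying the plane-wave discretization error $|E_M^{PAW} - E^{PAW}|$. By the triangle inequality,
\begin{equation}
\label{eq:split_strategy}
\left| E_M^{PAW} - E_0 \right| \leq \left| E_M^{PAW} - E^{PAW} \right| + \left| E^{PAW} - E_0 \right|,
\end{equation}
and the second term is already bounded by $C\eta^2 \leq C\eta$ thanks to Theorem~\ref{thm:E_PAW_bound}. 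So the entire task reduces to showing that the Galerkin (plane-wave) discretization contributes at most $C\,\eta^2/(\eta M)^n$ for every $n$, which is the genuinely new part of the statement. The presence of an arbitrary $n$ signals \emph{spectral} (faster-than-polynomial) convergence, which is exactly what one expects once the pseudopotential has smoothed away the Dirac singularities: the eigenfunctions of \eqref{eq:PAW_eig_pb_pseudo} are smooth, so their Fourier coefficients decay faster than any power of the frequency.

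First I would set up the generalized eigenvalue problem \eqref{eq:PAW_eig_pb_pseudo} in the variational framework and invoke the standard theory for Galerkin approximation of symmetric generalized eigenvalue problems. Writing $V_M$ for the span of $M$ plane waves, the key quantity is the best-approximation error $\inf_{v \in V_M}\|f^{PAW} - v\|$ of the lowest PAW eigenfunction $f^{PAW}$, measured in the appropriate (form) norm. Because $H_\mathrm{ps}$ involves only the mollified potentials $\chi_\epsilon$ and the projector/pseudo-wave-function terms in \eqref{eq:H_PAW_pseudo} are built from smooth functions (under Assumption~\ref{assump:1}), the eigenfunction $f^{PAW}$ is smooth; more precisely I would show that $f^{PAW} \in H^s_\mathrm{per}(0,1)$ for every $s$, with Sobolev norms controlled in terms of $\eta$ and $\epsilon = \eta$. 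The standard plane-wave truncation estimate then gives, for each $n$,
\begin{equation}
\label{eq:fourier_truncation}
\inf_{v \in V_M} \| f^{PAW} - v \|_{H^1_\mathrm{per}} \leq \frac{C_n}{M^{n}} \, \| f^{PAW} \|_{H^{n+1}_\mathrm{per}},
\end{equation}
and a Céa-type / min-max argument upgrades this approximation error into an eigenvalue error, typically with the square of the best-approximation error for the eigenvalue (since it is a Rayleigh-quotient functional), or at least its first power here.

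The main obstacle is tracking the $\eta$-dependence of the constants. The factor $\eta^2/(\eta M)^n = \eta^{2-n} M^{-n}$ is delicate: it tells us that the high-order Sobolev norm $\|f^{PAW}\|_{H^{n+1}}$ must blow up exactly like $\eta^{2-2n}$ (or the discretization constant must), so that the clean power $(\eta M)^{-n}$ emerges. This is precisely the kind of degeneracy flagged in the VPAW discussion, where the $d$-th derivative jumps of the pseudo wave functions blow up at the rate $\eta^{1-d}$ as $\eta \to 0$. Consequently the hard part will be to prove a quantitative regularity estimate of the form $\|f^{PAW}\|_{H^{n+1}_\mathrm{per}} \leq C_n\, \eta^{1-n}$ (or the analogous bound on the discretized problem's constants), making visible that the natural small-scale length in the problem is $\eta$ and that derivatives cost a factor $\eta^{-1}$ each. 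I would obtain this by a scaling argument: rescaling the acting region $[-\eta,\eta]$ to $[-1,1]$ converts $\eta$-dependence into explicit powers through the chain rule, so that each spatial derivative produces one factor of $\eta^{-1}$, while the pseudopotential $\chi_\epsilon$ with $\epsilon = \eta$ contributes bounded rescaled data.

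Finally, I would assemble the pieces: substitute the regularity bound into \eqref{eq:fourier_truncation}, pass from eigenfunction approximation to eigenvalue error by the min-max characterization of $E_M^{PAW}$ as a constrained Rayleigh quotient for the pair $(H^{PAW}, S^{PAW})$ — here one must check that $S^{PAW}$ remains uniformly positive definite on $V_M$ for $\eta \leq \eta_0$, which follows from Assumption~\ref{assump:2} and the Riesz-basis property — and combine with \eqref{eq:split_strategy}. Collecting terms yields the discretization contribution $C\,\eta^2/(\eta M)^n$, and adding the $C\eta$ bound from Theorem~\ref{thm:E_PAW_bound} gives
$$
\left| E_M^{PAW} - E_0 \right| \leq C\left( \eta + \frac{\eta^2}{(\eta M)^n}\right),
$$
valid for all $n \in \N^*$, as claimed. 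The only subtlety in the assembly is ensuring the constant $C$ is genuinely independent of both $\eta$ and $M$ (it may depend on $N$, $d$, and $n$), which is why the scaling-based regularity estimate must be carried out with explicit powers of $\eta$ rather than absorbed into an unspecified constant.
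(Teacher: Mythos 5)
Your overall skeleton --- triangle inequality $|E_M^{PAW}-E_0| \leq |E_M^{PAW}-E^{PAW}| + |E^{PAW}-E_0|$, the bound $|E^{PAW}-E_0| \leq C\eta$ from Theorem~\ref{thm:E_PAW_bound}, and spectral plane-wave convergence driven by an $\eta$-explicit regularity estimate for the PAW eigenfunction --- is the natural route, and it matches the machinery the paper actually builds: note that the paper never writes out a proof of Theorem~\ref{thm:E_PAW_M}, but the quantitative regularity you propose to establish ``by scaling'' is precisely Lemma~\ref{lem:der_f_paw}, namely $\|f^{(k)}\|_{\infty,\eta,I} \leq C\left(\eta^{1-k}+\epsilon^{1-k}\right)\|f\|_{\infty,\eta,I}$, which the paper proves not by rescaling but by bootstrapping the strong form \eqref{eq:fpaw_der2} of the eigenvalue equation, differentiating it repeatedly and controlling the nonlocal projector terms through Lemma~\ref{lem:matrice_moche}; your scaling heuristic is defensible (with $\epsilon=\eta$ all data become functions of $x/\eta$ and $E^{PAW}$ is bounded by Lemma~\ref{lem:E_PAW_bounded_below}), but the finite-rank terms $\langle \widetilde{p}^I, f\rangle$ are exactly what makes the rigorous version nontrivial.

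There are, however, two concrete gaps. First, the power counting does not close as written. Your claimed regularity $\|f^{PAW}\|_{H^{n+1}_\mathrm{per}} \leq C_n\,\eta^{1-n}$ is stronger by a factor $\eta^{1/2}$ than what Lemma~\ref{lem:der_f_paw} actually yields: with $\epsilon=\eta$ one gets $\|f^{(n+1)}\|_{\infty,\eta,I}\leq C\eta^{-n}$, and integrating over the two windows of measure $\mathcal{O}(\eta)$ (outside them $-f''=E^{PAW}f$ gives uniformly bounded derivatives) produces $\|f^{(n+1)}\|_{L^2_\mathrm{per}} \leq C\eta^{1/2-n}$. Feeding the correct exponent into your ``square of the best-approximation error in $H^1$'' eigenvalue estimate gives $\|f-\Pi_M f\|_{H^1_\mathrm{per}}^2 \leq C\eta\,(\eta M)^{-2n}$, i.e.\ $\eta/(\eta M)^n$ after relabeling $n$ --- one full power of $\eta$ short of the stated $\eta^2/(\eta M)^n$. (Your sketch is also internally inconsistent on this point, asking at one stage for $\eta^{2-2n}$ and at another for $\eta^{1-n}$.) Recovering $\eta^2$ requires a finer assembly, e.g.\ bounding the eigenvalue error by the \emph{product} of the $L^2$ and $H^1$ best-approximation errors, $\eta^{3/2}(\eta M)^{-n}\cdot \eta^{1/2}(\eta M)^{-n} = \eta^2 (\eta M)^{-2n}$, exploiting that the residual concentrates on the $\eta$-regions; your proposal mentions this possibility only in passing and never carries it out. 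Second, your min-max lower bound $E_M^{PAW}\geq E^{PAW}$ requires $S^{PAW}$ to be positive definite on the plane-wave space $V_M$, and the justification you offer (Assumption~\ref{assump:2} plus ``the Riesz-basis property'') does not deliver it: Assumption~\ref{assump:2} concerns invertibility of $\left(\langle \widetilde{p}_j^I,\phi_k^I\rangle\right)_{j,k}$, no Riesz-basis property is assumed for the \emph{truncated} families, and the paper's own control of $S^{PAW}$ (item 5 of Lemma~\ref{lem:widetilde_S_L2_bound}) bounds $|\langle f, S^N f\rangle - \langle f,f\rangle|$ only by $C\eta\|f\|_{H^1_\mathrm{per}}^2$, which is vacuous for highly oscillatory $f\in V_M$. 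Nor can the paper's lower-bound machinery for Theorem~\ref{thm:E_PAW_bound} (Lemmas~\ref{lem:f_paw_approx2} and \ref{lem:f_paw_approx_DeltaV}) be transplanted to the discrete eigenfunction, since it rests on the strong equation through Lemma~\ref{lem:der_f_paw}, which a Galerkin solution does not satisfy. So the positivity of $S^{PAW}$ on $V_M$, uniformly in $\eta$ and $M$, needs an actual argument rather than a citation; without it the lower half of the estimate is unproved.
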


According to Theorem \ref{thm:E_PAW_M}, if we want to compute $E_0$ up to a desired accuracy $\varepsilon$, then it suffices to choose the PAW cut-off radius $\eta$ equal to $\frac{1}{C \varepsilon}$ and solve the PAW eigenvalue problem \eqref{eq:PAW_eig_pb_pseudo} with $M \geq \frac{1}{\eta}$ plane-waves. 

\begin{note}
Using more PAW functions does not improve the bound on the computed eigenvalue. It is due to the poor lower bound in Theorems \ref{thm:E_PAW_bound} and \ref{thm:E_PAW_improved_bound}.
Should the PAW method with odd functions (Section \ref{subsec:odd_PAW}) be variational, we would know a priori that $E^{PAW} \geq E_0$. Therefore, we could prove the estimate
$$
0 < E_M^{PAW} - E_0  \leq C \left( \eta^{2N} + \frac{\eta^2}{(\eta M)^n} \right).
$$
Hence taking a plane wave cut-off $M \geq \frac{1}{\eta}$ would ensure that the eigenvalue $E_0$ is computed up to an error of order $\cO (\eta^{2N})$.
\end{note}

\section{Proofs}
\label{sec:proof}

\subsection{Useful lemmas}

We introduce some notation used in the below proofs. Let $I \in \lbrace 0, a \rbrace$ and
\begin{align*}
p^I(t) & := (p_1^I(t), \dots, p_{N}^I(t))^T \in \R^N, \\
\widetilde{p}^I(t) & := (\widetilde{p}_1^I(t), \dots, \widetilde{p}_{N}^I(t))^T \in \R^N, \\
\psh{\widetilde{p}^I}{f} & := \left(\psh{\widetilde{p}_1^I}{f}, \dots, \psh{\widetilde{p}_{N}^I}{f}\right)^T \in \R^N, \forall \, f \in L^2_\mathrm{per}(0,1),\\
\Phi_I(t) & := (\phi_1^I(t), \dots, \phi_{N}^I(t))^T \in \R^N, \\
\widetilde{\Phi}_I(t) & := (\widetilde{\phi}_1^I(t), \dots, \widetilde{\phi}_{N}^I(t))^T \in \R^N, \\
A_I & := ( \langle p_i^I, \phi_j^I \rangle)_{1 \leq i,j \leq N} \in \R^{N \times N}.
\end{align*}
For $p \in [1, \infty]$, we denote by 
$$
\| f \|_{p,\eta,I} = \begin{cases}
\|f\|_{L^p(-\eta,\eta)}, & \quad \text{if } I = 0 \\
\|f\|_{L^p(a-\eta,a+\eta)}, & \quad \text{if } I = a
\end{cases}.
$$ 

First, we recall some results of \cite{blanc2017vpaw1d} that are useful for the proofs of Theorems \ref{thm:bound_trunc_PAW} to \ref{thm:E_PAW_M}.  

\begin{lem}
\label{lem:approximation}
Let $\widetilde\psi$ be an eigenfunction of \eqref{eq:H_mol} associated to the lowest eigenvalue $E_0$ and $\widetilde{\psi}_e$ be its even part. Let $\psi = (\mathrm{Id} + T_N) \widetilde{\psi}$ where $T_N$ is the operator \eqref{eq:T_N} and $\psi_e$ be the even part of $\psi$. Suppose $\eta_0 > 0$ satisfies Assumption \ref{assump:1} and Assumption \ref{assump:2}. Then there exists a constant $C$ independent of $\eta$ such that for any $0 < \eta \leq \eta_0$ we have
$$
\left\| \widetilde{\psi}_e - \psh{\widetilde{p}^I}{\widetilde{\psi}}^T \widetilde{\Phi}_I \right\|_{\infty,\eta,I} \leq C \eta^{2N},
$$
and 
$$
\left\| E_0 \psi_e - \psh{A_I^{-1}p^I}{\psi} \cdot \cE^I \Phi_I \right\|_{\infty,\eta,I} \leq C \eta^{2N-2},
$$
where  $\cE^I$ is the $N\times N$ diagonal matrix with entries $(-\epsilon_1^I, \dots, -\epsilon_N^I)$.
\end{lem}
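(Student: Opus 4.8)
The plan is to prove both bounds for $I=0$ on the interval $(-\eta,\eta)$; the case $I=a$ follows verbatim after the translation $x\mapsto x-a$, and the two sites decouple since $T_{0,N}$ and $T_{a,N}$ act on disjoint regions. Throughout I would exploit that the selected atomic functions $\phi_i^0$, the pseudo wave functions $\widetilde{\phi}_i^0$ and the projectors $\widetilde{p}_i^0$ are all even, so that $\psh{\widetilde{p}_i^0}{\widetilde{\psi}}=\psh{\widetilde{p}_i^0}{\widetilde{\psi}_e}$ and only even parts enter. The duality $\psh{\widetilde{p}_i^0}{\widetilde{\phi}_j^0}=\delta_{ij}$ shows that $P_N f:=\sum_{i=1}^N\psh{\widetilde{p}_i^0}{f}\widetilde{\phi}_i^0$ is the oblique projection onto $\spanv(\widetilde{\phi}_1^0,\dots,\widetilde{\phi}_N^0)$ fixing each $\widetilde{\phi}_j^0$, while $Q_N f:=\psh{A_0^{-1}p^0}{f}\cdot\Phi_0$ is the oblique projection onto $\spanv(\phi_1^0,\dots,\phi_N^0)$ characterised by $\psh{p_i^0}{Q_N f}=\psh{p_i^0}{f}$ and $Q_N\phi_j^0=\phi_j^0$. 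Assumptions \ref{assump:1}--\ref{assump:2} are precisely what make these projections well-defined; a preliminary task is to show that their norms on $L^\infty(-\eta,\eta)$ stay bounded as $\eta\to0$, which I would obtain through the rescaling $x=\eta t$, $t\in(-1,1)$, turning $\rho_\eta$ and the associated Gram matrices into quantities that are, to leading order, independent of $\eta$.

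For the first estimate I would use the defining relation $\psi=(\mathrm{Id}+T_N)\widetilde{\psi}$. On $(-\eta,\eta)$ the site-$a$ contribution vanishes, so $\psi=\widetilde{\psi}+\sum_{i=1}^N(\phi_i^0-\widetilde{\phi}_i^0)\psh{\widetilde{p}_i^0}{\widetilde{\psi}}$; subtracting $P_N\widetilde{\psi}$ and taking even parts yields the pointwise identity
\begin{equation*}
\widetilde{\psi}_e-\sum_{i=1}^N\psh{\widetilde{p}_i^0}{\widetilde{\psi}}\widetilde{\phi}_i^0=\psi_e-\sum_{i=1}^N\psh{\widetilde{p}_i^0}{\widetilde{\psi}}\phi_i^0\qquad\text{on }(-\eta,\eta).
\end{equation*}
This transfers the problem onto the genuine eigenfunction $\psi_e$ and the genuine atomic functions $\phi_i^0$, which is the key simplification: near $0$ each of $\psi_e$ and the $\phi_i^0$ solves a constant-coefficient equation on $(0,\eta)$ and satisfies the \emph{same} Kato jump $u'(0^+)-u'(0^-)=-Z_0\,u(0)$. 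Hence the residual $g$, equal to either side on $(-\eta,\eta)$, again obeys $g'(0^+)-g'(0^-)=-Z_0\,g(0)$, and its non-smooth $\lvert x\rvert$-part is controlled purely by the scalar $g(0)$.

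The rate $\eta^{2N}$ then comes from a moment/Taylor argument. Since $P_N\widetilde{\psi}$ reproduces the $\psh{p_i^0}{\cdot}$-moments of $\widetilde{\psi}_e$, the residual has vanishing $p$-moments, $\psh{p_i^0}{g}=0$ for $i=1,\dots,N$. Expanding these $N$ exact orthogonality relations in powers of $\eta$ after the rescaling $x=\eta t$ --- pairing the even Taylor coefficients of $g$ with the numbers $\int_{-1}^1\rho(t)t^{2m}\,\mathrm{d}t$ weighted by the rescaled coefficients of the $\widetilde{\phi}_i^0$ --- forces the even part of $g$ to vanish to order $x^{2N}$ at $0$ and $g(0)$ to be $O(\eta^{2N})$. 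As $g$ is a fixed finite combination of entire functions with coefficients bounded uniformly in $\eta$, I then conclude $\|g\|_{\infty,\eta,0}\leq C\eta^{2N}$, which is the first bound.

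For the second estimate I would run the same analysis with $Q_N$ in place of $P_N$: the residual $h:=\psi_e-Q_N\psi$ satisfies $\psh{p_i^0}{h}=0$, shares the common Kato jump, and therefore --- by the identical moment argument --- is an even analytic function (plus an $O(\eta^{2N})$ cusp) vanishing to order $x^{2N}$ at $0$. The point is that the quantity to be bounded is, via the eigenvalue equations $-(\phi_k^0)''=\epsilon_k^0\phi_k^0$ and $-\psi_e''=E_0\psi_e$ on $(0,\eta)$, a second-order differential expression in $h$: these relations identify $\psh{A_0^{-1}p^0}{\psi}\cdot\cE^0\Phi_0$ with $(Q_N\psi)''$ and $E_0\psi_e$ with $-\psi_e''$, so that the left-hand side of the second estimate equals $\pm h''$ up to the sign conventions of the eigenvalue problems. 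Differentiating twice a function whose Taylor series starts at order $x^{2N}$ on an interval of radius $\eta$ lowers the order by two, giving $\|h''\|_{\infty,\eta,0}\leq C\eta^{2N-2}$, as claimed. The main obstacle in both parts is the moment step: as $\eta\to0$ the functions $\widetilde{\phi}_i^0$ all tend to the constant $\phi_i^0(0)$, so the $N$ relations $\psh{p_i^0}{g}=0$ degenerate at leading order and must be disentangled order by order in $\eta$; making this rigorous --- and checking that the implied constants, i.e. the inverse rescaled moment matrices, stay bounded --- is exactly where Assumptions \ref{assump:1}--\ref{assump:2} and the careful bookkeeping of \cite{blanc2017vpaw1d} are needed.
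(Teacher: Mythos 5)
Your proposal is correct and follows essentially the same route as the paper: you derive the paper's key transfer identity $\widetilde{\psi} - \langle \widetilde{p}^I , \widetilde{\psi} \rangle^T \widetilde{\Phi}_I = \psi - \langle A_I^{-1}p^I , \psi \rangle^T \Phi_I$ (Equation \eqref{eq:widetilde_p-Ap-psi}), and the moment/Taylor mechanism you then sketch --- vanishing $p^I$-moments of the residual, the common Kato jump slaving the cusp to $g(0)$, the rescaled Gram matrices, and the identification of the second quantity with a second derivative via $H\Phi_I = \cE^I \Phi_I$ --- is precisely the content of Lemmas 4.2 and 4.6 of \cite{blanc2017vpaw1d}, which the paper's own proof simply cites, so your deferral of the order-by-order bookkeeping to that reference leaves you at the same level of completeness as the paper itself. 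One remark: your hedge ``up to the sign conventions'' is in fact necessary, since with $\cE^I$ as literally defined (entries $-\epsilon_i^I$) the quantity would be $-\psi_e'' - (Q_N\psi)''$ rather than $-(\psi_e - Q_N\psi)''$, but the paper later uses $H\Phi_I = \cE^I\Phi_I$ on $(I-\eta,I+\eta)$, i.e.\ effectively entries $+\epsilon_i^I$, under which convention your identification of the left-hand side with $-h''$ is exact.
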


\begin{proof}
We have
\begin{equation}
\label{eq:widetilde_p-Ap-psi}
\widetilde{\psi} - \psh{\widetilde{p}^I}{\widetilde{\psi}}^T \widetilde{\Phi}_I = \psi - \psh{A_I^{-1} p}{\psi} \cdot \Phi_I ,
\end{equation}
and in combination with Lemmas 4.2 and 4.6 in \cite{blanc2017vpaw1d}, we obtain
\begin{equation*}
\left\| \widetilde{\psi}_e - \psh{\widetilde{p}^I}{\widetilde{\psi}}^T \widetilde{\Phi}_I \right\|_{\infty,\eta,I} \leq C \eta^{2N},
\end{equation*}
where $C>0$ is independent of $\eta$.

The second estimate is proved the same way.
\end{proof}

\begin{lem}
\label{lem:matrice_moche}
Let $P_k(t) = \frac{1}{2^k k!} (t^2-1)^k$ and $P(t) = (P_0(t), \dots, P_{d-1}(t))^T$. Let $C^{(P)}_\eta \in \R^{N \times d}$ be the matrix such that for $t \in (-\eta,\eta)$, 
$$
\widetilde{\Phi}_I(t) = C^{(P)}_\eta P(\tfrac{t}{\eta}).
$$
Let $C_1 \in \R^{N \times N}$ and $C_2 \in \R^{N \times (d-N)}$ be the matrices such that
$$
C^{(P)}_\eta = \Big( C_1 \ \Big| \ C_2 \Big). 
$$
Let $M_\eta$ be the matrix
$$
M_\eta = \left(C_\eta^{(P)}\right)^T \left( C_\eta^{(P)} G(P) \left(C_\eta^{(P)}\right)^T \right)^{-1} C_\eta^{(P)},
$$
where $G(P)$ is the matrix $\itg{-1}{1}{\rho(t) P(t)P(t)^T}{t}$.

Then the following statements hold.
\begin{enumerate}
\item the norm of the matrix $M_\eta$ is uniformly bounded in $\eta$.

\item for all $x \in (-\eta,\eta)$
\begin{equation*}
\psh{\widetilde{p}^I}{f}^T  \Phi_I(x)  = \left( M_\eta \itg{-1}{1}{\rho(t) f(\eta t) P(t)}{t} \right)^T  \begin{pmatrix} C_1^{-1} \\ 0 \end{pmatrix} \Phi_I(x) ,
\end{equation*}
and
\begin{equation*}
\psh{\widetilde{p}^I}{f}^T \widetilde{\Phi}_I(x) = \left( M_\eta \itg{-1}{1}{\rho(t) f(\eta t) P(t)}{t} \right)^T  P(x/\eta) .
\end{equation*}

\item for all $0 < \eta \leq \eta_0$ and $x \in (-\eta,\eta)$
$$
C_1^{-1} \Phi_I(x) = \begin{pmatrix}
1 \\ *
\end{pmatrix} + \cO(\eta) \quad \text{and} \quad  C_1^{-1} \Phi_I'(x) = \frac{1}{\eta} \begin{pmatrix}
0 \\ *
\end{pmatrix} + \cO(1),
$$
where $\begin{pmatrix} 1 \\ * \end{pmatrix}$ and $\begin{pmatrix} 0 \\ * \end{pmatrix}$ are uniformly bounded in $\eta$ and $x$.
\end{enumerate}
\end{lem}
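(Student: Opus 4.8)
The plan is to reduce the three statements to elementary linear algebra resting on one rescaling identity, treating the asymptotics of item (3) --- which I expect to be the only delicate point --- via a triangular factorization of $C_1$ combined with Taylor expansions at the cutoff. First I would record the effect of the substitution $t=\eta s$. Since $\eta<\eta_0$ is small, on $(-\eta,\eta)$ the weight reduces to $\rho_\eta(t)=\rho(t/\eta)$, and using $\widetilde{\Phi}_I(\eta s)=C^{(P)}_\eta P(s)$ together with $\widetilde{p}^I=B^{-1}p^I$, $B_{ij}=\psh{p_i^I}{\widetilde{\phi}_j^I}$, a direct computation yields $B=\eta\,C^{(P)}_\eta G(P)(C^{(P)}_\eta)^T$ and
\[
\psh{\widetilde{p}^I}{f}=\Big(C^{(P)}_\eta G(P)(C^{(P)}_\eta)^T\Big)^{-1}C^{(P)}_\eta \itg{-1}{1}{\rho(s)f(\eta s)P(s)}{s},
\]
the factors of $\eta$ cancelling. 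Invertibility of $B$, hence of $C^{(P)}_\eta G(P)(C^{(P)}_\eta)^T$, is guaranteed by Assumption \ref{assump:1}; note $C^{(P)}_\eta$ has full row rank $N$ because the $\widetilde{\phi}_i^I$ are linearly independent while the $P_k$ have distinct degrees.

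With this in hand, item (2) is purely algebraic. Both $G(P)$ and $C^{(P)}_\eta G(P)(C^{(P)}_\eta)^T$ are symmetric, so $M_\eta$ is symmetric, and transposing the formula above gives $\psh{\widetilde{p}^I}{f}^T=\big(\int\rho\, f(\eta\cdot)P\big)^T(C^{(P)}_\eta)^T(C^{(P)}_\eta G(C^{(P)}_\eta)^T)^{-1}$. For the first identity I would insert the block relation $C^{(P)}_\eta\begin{pmatrix}C_1^{-1}\\0\end{pmatrix}=C_1C_1^{-1}+C_2\cdot 0=I_N$, which is exactly the splitting $C^{(P)}_\eta=(C_1\mid C_2)$, so that $(C^{(P)}_\eta)^T(\cdots)^{-1}\Phi_I(x)=M_\eta\,C^{(P)}_\eta\begin{pmatrix}C_1^{-1}\\0\end{pmatrix}\Phi_I(x)$ and the claimed expression follows after using $M_\eta^T=M_\eta$. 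For the second identity I would simply substitute $\widetilde{\Phi}_I(x)=C^{(P)}_\eta P(x/\eta)$, which turns $(C^{(P)}_\eta)^T(\cdots)^{-1}C^{(P)}_\eta$ into $M_\eta$ acting on $P(x/\eta)$. Item (1) is then a factorization: writing $G(P)=G^{1/2}G^{1/2}$ (legitimate since $\rho>0$ and the $P_k$ are independent, so $G(P)\succ 0$) and $D:=C^{(P)}_\eta G^{1/2}$, one gets $M_\eta=G^{-1/2}\big(D^T(DD^T)^{-1}D\big)G^{-1/2}$. The bracketed matrix is the orthogonal projector onto the row space of $D$, hence has spectral norm $1$, so $\|M_\eta\|\leq\|G(P)^{-1}\|$; the decisive point is that $G(P)$ does not depend on $\eta$, making the bound uniform.

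The only real work is item (3). I would first extract $C_1$ from the matching conditions: since $\widetilde{\Phi}_I=C^{(P)}_\eta P(\cdot/\eta)$ agrees with $\Phi_I$ up to order $d-1$ at $\eta$ and $P_k$ vanishes to order $k$ at $1$, the identities $\eta^j\Phi_I^{(j)}(\eta)=\sum_{k\leq j}P_k^{(j)}(1)\,c_k$ (with $c_k$ the columns of $C^{(P)}_\eta$) hold for $0\leq j\leq N-1$. In matrix form this reads $C_1=\mathcal{W}L^{-T}$, where $\mathcal{W}=(\eta^j\Phi_I^{(j)}(\eta))_{0\leq j\leq N-1}$ and $L$ is the fixed unit lower-triangular matrix $L_{jk}=P_k^{(j)}(1)$; hence $C_1^{-1}=L^T D_\eta^{-1}\Psi^{-1}$ with $D_\eta=\mathrm{diag}(1,\eta,\dots,\eta^{N-1})$ and $\Psi=(\Phi_I^{(j)}(\eta))_{0\leq j\leq N-1}\to\Psi_0$ invertible as $\eta\to 0$.

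The crux, and the main obstacle, is that $D_\eta^{-1}$ amplifies the $j$-th coordinate by $\eta^{-j}$, so the naive bound $\Phi_I(x)=\Phi_I(0)+\cO(\eta)$ is useless after multiplication. Instead I would Taylor-expand $\Phi_I$ (and $\Phi_I'$) about $\eta$ to order $N-1$, using that $\Phi_I$ is even and smooth on $[0,\eta]$; this shows $(\Psi^{-1}\Phi_I(x))_j=\tfrac{(x-\eta)^j}{j!}+\cO(\eta^N)$ and $(\Psi^{-1}\Phi_I'(x))_j=\tfrac{(x-\eta)^{j-1}}{(j-1)!}+\cO(\eta^{N-1})$ for $j\geq 1$, with the $j=0$ coordinate of the second vector of order $\cO(\eta^{N-1})$. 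Applying $D_\eta^{-1}$ then converts the first into a bounded vector plus $\cO(\eta)$, and the second into $\eta^{-1}$ times a bounded vector plus $\cO(1)$, precisely because the expansion was pushed to order $N$. Finally, since $P_0\equiv 1$ the top row of $L^T$ equals $e_1^T=(1,0,\dots,0)$, while the leading column of $\Psi_0$ is $\Phi_I(0)$; multiplication by $L^T$ therefore pins the first coordinate to $1$ (resp.\ $0$), giving exactly the structure $\begin{pmatrix}1\\ *\end{pmatrix}+\cO(\eta)$ and $\tfrac{1}{\eta}\begin{pmatrix}0\\ *\end{pmatrix}+\cO(1)$ claimed, with $*$ uniformly bounded in $\eta$ and $x$.
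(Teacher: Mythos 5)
Your items (1) and (2) are correct, and in fact more self-contained than the paper, which simply cites Lemmas 4.13--4.14 of the companion paper \cite{blanc2017vpaw1d}: the rescaling identity $B=\eta\, C^{(P)}_\eta G(P)(C^{(P)}_\eta)^T$, the symmetry of $M_\eta$, the block identity $C^{(P)}_\eta\begin{pmatrix}C_1^{-1}\\0\end{pmatrix}=I_N$, and the projector factorization $M_\eta=G^{-1/2}\bigl(D^T(DD^T)^{-1}D\bigr)G^{-1/2}$ giving $\|M_\eta\|\leq\|G(P)^{-1}\|$ are all sound, with full row rank of $C^{(P)}_\eta$ supplied by Assumption \ref{assump:1}.

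Item (3), however, contains a genuine gap, and it sits exactly at the point you flagged as the crux. Your factorization $C_1^{-1}=L^TD_\eta^{-1}\Psi^{-1}$ is correct, but the claim that $\Psi=(\Phi_I^{(j)}(\eta))_{0\leq j\leq N-1}$ converges to an \emph{invertible} limit $\Psi_0$ is false for $N\geq 2$. Every even eigenfunction of $H_0$ satisfies the same cusp relation: evenness gives $\phi_i'(0^-)=-\phi_i'(0^+)$, and the derivative jump $\phi_i'(0^+)-\phi_i'(0^-)=-Z_0\phi_i(0)$ then forces $\phi_i'(0^+)=-\tfrac{Z_0}{2}\phi_i(0)$ with the \emph{same} constant for all $i$. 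More generally, since $\phi_i^{(2k)}(0^+)=(-\epsilon_i)^k\phi_i(0)$ and $\phi_i^{(2k+1)}(0^+)=(-\epsilon_i)^k\phi_i'(0^+)$, each odd-derivative column of $\Psi_0$ is $-\tfrac{Z_0}{2}$ times the preceding even-derivative column, so $\Psi_0$ has rank at most $\lceil N/2\rceil$ and is singular whenever $N\geq2$. Consequently $\|\Psi^{-1}\|$ blows up as $\eta\to0$ (one can check $\det\Psi\sim\eta$ already for $N=2$), your intermediate estimate $(\Psi^{-1}\Phi_I(x))_j=\tfrac{(x-\eta)^j}{j!}+\cO(\eta^N)$ does not follow from the Taylor remainder bound $\cO(\eta^N)$, and after multiplication by $D_\eta^{-1}$ the error terms are no longer controlled as you claim. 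The conclusion could conceivably be rescued if one proves the quantitative bound $\|\Psi^{-1}\|=\cO(\eta^{-1})$, i.e., that each degenerate pair of columns separates at linear order in $\eta$ (the recovered directions being $\bigl((-\epsilon_i)^k(\epsilon_i+\tfrac{Z_0^2}{4})\phi_i(0)\bigr)_i$, which must be shown linearly independent of the surviving range), but that is a real argument you have not given. This degeneracy is precisely why the companion paper works instead with a structured expansion of $C^{(P)}_\eta$ itself, namely $C^{(P)}_\eta=\Phi_I(\eta)e_0^T+\eta\,\Phi_I'(\eta)\beta_1^T+\cO(\eta^2)$ (its Lemma 4.9, quoted in the proof of Lemma \ref{lem:f_paw_approx} here), which encodes these cancellations from the start rather than inverting a nearly singular derivative matrix.
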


\begin{proof}
Proofs of these statements can be found in the proof of Lemma 4.13 and 4.14 in \cite{blanc2017vpaw1d}.
\end{proof}

\begin{lem}
\label{lem:widetilde_p_stuff}
There exists a positive constant $C$ independent of $f$ and $\eta$ such that we have the following estimates
\begin{enumerate}
\item for all $f \in H^1_\mathrm{per}(0,1)$, $0 < \eta \leq \eta_0$ and $x \in (-\tfrac{1}{2},\tfrac{1}{2})$, we have
$$
\left|\psh{\widetilde{p}^I}{f}^T (\Phi_I(x) - \widetilde{\Phi}_I(x)) \right| \leq C \eta \|f\|_{H^1_\mathrm{per}} \quad \text{and} \quad \left|\psh{\widetilde{p}^I}{f}^T (\Phi_I'(x) - \widetilde{\Phi}_I'(x)) \right| \leq C \|f\|_{H^1_\mathrm{per}};
$$
\item for all $f \in L^2_\mathrm{per}(0,1)$, $0 < \eta \leq \eta_0$ and $x \in (-\tfrac{1}{2},\tfrac{1}{2})$, we have
$$
\left|\psh{\widetilde{p}^I}{f}^T (\Phi_I(x) - \widetilde{\Phi}_I(x)) \right| \leq \frac{C}{\eta^{1/2}} \|f\|_{L^2_\mathrm{per}};
$$
\item for all $f \in H^1_\mathrm{per}(0,1)$, $0 < \eta \leq \eta_0$ and $x \in (-\eta,\eta)$, we have
$$
\left|\psh{\widetilde{p}^I}{f}^T \widetilde{\Phi}_I(x) \right| \leq C \|f\|_{L^\infty} \quad \text{and} \quad   \left|\psh{\widetilde{p}^I}{f}^T \Phi_I(x) \right| \leq C \|f\|_{L^\infty};
$$
\item for all $f \in H^1_\mathrm{per}(0,1)$, $0 < \eta \leq \eta_0$ and $x \in (-\eta,\eta)$, we have
$$
\left|\psh{\widetilde{p}^I}{f}^T \widetilde{\Phi}_I'(x) \right| \leq C \|f\|_{H^1_\mathrm{per}} \quad \text{and} \quad  \left|\psh{\widetilde{p}^I}{f}^T \Phi_I'(x) \right| \leq C \|f\|_{H^1_\mathrm{per}}.
$$
\end{enumerate} 
\end{lem}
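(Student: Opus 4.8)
The plan is to reduce all four estimates to scalar bounds on the individual building blocks, exploiting that each left-hand side is a finite sum of $N$ terms of the form $\psh{\widetilde p_i^I}{f}\,(\phi_i^I(x)-\widetilde\phi_i^I(x))$ (or the analogous expressions with $\widetilde\Phi_I$, $\Phi_I$ and their derivatives). Since $\phi_i^I-\widetilde\phi_i^I$ vanishes outside $\bigcup_k[-\eta+k,\eta+k]$ (resp. around $a$), the left-hand sides of estimates 1 and 2 are identically zero for $x$ outside the acting interval, so it suffices to work there; on that interval $x/\eta\in(-1,1)$ and the representation of Lemma \ref{lem:matrice_moche} is available, while estimates 3 and 4 are already stated on $(-\eta,\eta)$. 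I would first assemble a short list of pointwise building-block bounds, all uniform in $\eta$ and $i$: that $\|\phi_i^I\|_{\infty,\eta,I}$, $\|\widetilde\phi_i^I\|_{\infty,\eta,I}$, $\|(\phi_i^I)'\|_{\infty,\eta,I}$ and $\|(\widetilde\phi_i^I)'\|_{\infty,\eta,I}$ are all $\cO(1)$; that the cusp forces $\|\phi_i^I-\widetilde\phi_i^I\|_{\infty,\eta,I}=\cO(\eta)$ while only $\|(\phi_i^I-\widetilde\phi_i^I)'\|_{\infty,\eta,I}=\cO(1)$; and that the projectors scale as $\|\widetilde p_i^I\|_{L^1}=\cO(1)$ and $\|\widetilde p_i^I\|_{L^2}=\cO(\eta^{-1/2})$.

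These building blocks are exactly the content of the VPAW analysis of \cite{blanc2017vpaw1d} and can be recovered from the construction of Section \ref{subsec:generation}. After the rescaling $x=\eta u$, the pseudo wave function $\widetilde\phi_i^I(\eta\,\cdot)$ is the even Hermite interpolant of $\phi_i^I(\eta\,\cdot)$ at $u=\pm1$, whose $k$-th endpoint data is $\eta^k(\phi_i^I)^{(k)}(\eta)=\cO(\eta^k)$; as $\eta\to0$ this interpolant converges to the constant $\phi_i^I(0)$ at rate $\cO(\eta)$ in $u$, which yields the $\cO(\eta)$ size of $\phi_i^I-\widetilde\phi_i^I$ together with the $\cO(1)$ derivative bounds, the derivative being saturated by the $\cO(1)$ cusp slope that no polynomial can capture. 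The projector norms follow because $\widetilde p_k^I=\sum_j(B^{-1})_{kj}p_j^I$ with $p_j^I=\rho_\eta\widetilde\phi_j^I$ of $L^1$-size $\cO(\eta)$, while $B=(\psh{p_i^I}{\widetilde\phi_j^I})$ is an $\eta$-scaled Gram matrix, $B=\eta\widehat B$ with $\widehat B$ invertible and $\cO(1)$ under Assumption \ref{assump:1}, so that $\|B^{-1}\|=\cO(\eta^{-1})$; equivalently these scalings are already encoded in Lemma \ref{lem:matrice_moche} through the uniform boundedness of $M_\eta$ and the size of $C_1^{-1}\Phi_I$. From here I would bound the coefficients by $|\psh{\widetilde p_i^I}{f}|\leq\|\widetilde p_i^I\|_{L^1}\|f\|_{L^\infty}\leq C\|f\|_{H^1_\mathrm{per}}$, using the one-dimensional embedding $H^1_\mathrm{per}(0,1)\hookrightarrow L^\infty$, for the $H^1$-estimates, and by $|\psh{\widetilde p_i^I}{f}|\leq\|\widetilde p_i^I\|_{L^2}\|f\|_{L^2_\mathrm{per}}\leq C\eta^{-1/2}\|f\|_{L^2_\mathrm{per}}$ for estimate 2.

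The four estimates then follow by the triangle inequality and an $\ell^1$--$\ell^\infty$ Hölder bound over the $N$ indices, the constant $C$ absorbing $N$. For estimate 3 one combines $\|\widetilde\phi_i^I\|_{\infty,\eta,I},\|\phi_i^I\|_{\infty,\eta,I}=\cO(1)$ with $|\psh{\widetilde p_i^I}{f}|\leq C\|f\|_{L^\infty}$; for estimate 4 one uses instead $\|(\widetilde\phi_i^I)'\|_{\infty,\eta,I},\|(\phi_i^I)'\|_{\infty,\eta,I}=\cO(1)$ and the same coefficient bound, the point being that $(\widetilde\phi_i^I)'$ stays $\cO(1)$ rather than the generic $\cO(\eta^{-1})$ of an arbitrary polynomial on a scale-$\eta$ interval. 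For estimate 1 the factor $\eta$ comes from $\|\phi_i^I-\widetilde\phi_i^I\|_{\infty,\eta,I}=\cO(\eta)$ paired with $|\psh{\widetilde p_i^I}{f}|\leq C\|f\|_{H^1_\mathrm{per}}$, while the derivative bound uses $\|(\phi_i^I-\widetilde\phi_i^I)'\|_{\infty,\eta,I}=\cO(1)$; for estimate 2 the same $\cO(\eta)$ size paired with the $\cO(\eta^{-1/2})$ projector $L^2$-norm even gives the stronger $\cO(\eta^{1/2})\|f\|_{L^2_\mathrm{per}}$, which a fortiori implies the stated bound. The main obstacle is the second paragraph: establishing simultaneously the $\cO(\eta)$ smallness of $\phi_i^I-\widetilde\phi_i^I$ and the fact that its derivative merely stays $\cO(1)$ (it does \emph{not} vanish, precisely because of the cusp), together with the correct $\eta$-scaling of the projector norms. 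This is where the Hermite matching at $\pm\eta$, the Kato-type derivative jump of $\phi_i^I$, and the conditioning of the rescaled Gram matrix under Assumption \ref{assump:1} all enter.
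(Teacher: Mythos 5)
The decisive step of your proposal is false: the rescaled Gram matrix $\widehat B$ is \emph{not} uniformly invertible as $\eta\to 0$, so Assumption \ref{assump:1} does not yield $\|B^{-1}\|=\cO(\eta^{-1})$, and the projector-norm scalings $\|\widetilde p_i^I\|_{L^1}=\cO(1)$, $\|\widetilde p_i^I\|_{L^2}=\cO(\eta^{-1/2})$ on which all four of your estimates rest do not follow. Indeed, by \eqref{eq:C_eta} one has $\widetilde\Phi_I(\eta t)=\Phi_I(\eta)+\eta\,\Phi_I'(\eta)\,\beta_1^T P(t)+\cO(\eta^2)$, so after rescaling all pseudo wave functions collapse onto constants: $\widehat B_{ij}\to\phi_i^I(0)\,\phi_j^I(0)\int_{-1}^1\rho$, a rank-one matrix for $N\geq 2$. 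The degeneracy is in fact even stronger, because the cusp condition $(\phi_i^I)'(0_+)=-\tfrac{Z_I}{2}\phi_i^I(0)$ makes the order-$\eta$ corrections proportional to the leading terms as well (the Wronskian $\phi_1^I(\phi_2^I)'-\phi_2^I(\phi_1^I)'$ vanishes at $0_+$ and is only $\cO(\eta)$ at $\eta$). In the simplest case $N=2$, $d=2$ an explicit computation gives $\det\widehat B\sim\eta^4$, hence $\|B^{-1}\|\sim\eta^{-5}$ and $\|\widetilde p_i^0\|_{L^1}\sim\eta^{-1}$; the individual coefficient bound $|\psh{\widetilde p_i^I}{f}|\leq C\|f\|_{H^1_\mathrm{per}}$ therefore fails (it degrades like $\eta^{-1/2}$ against $H^1$-normalized functions oscillating at scale $\eta$), and with it your entire term-by-term $\ell^1$--$\ell^\infty$ strategy.

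What is true --- and what the paper exploits --- is that the stated bounds hold only for the \emph{full sums}, thanks to cancellations between the $N$ terms. The paper channels these cancellations through Lemma \ref{lem:matrice_moche}: the representation
$\psh{\widetilde{p}^I}{f}^T \widetilde{\Phi}_I(x) = \left( M_\eta \itg{-1}{1}{\rho(t) f(\eta t) P(t)}{t} \right)^T P(x/\eta)$,
together with the uniform boundedness of $M_\eta$ (a nontrivial fact imported from \cite{blanc2017vpaw1d}), replaces any bound on $B^{-1}$; item 2 is then a Cauchy--Schwarz estimate on $\itg{-1}{1}{\rho(t) f(\eta t) P(t)}{t}$, items 3 and 4 use additionally the structural facts $C_1^{-1}\Phi_I(x)=\begin{pmatrix}1\\ *\end{pmatrix}+\cO(\eta)$, $C_1^{-1}\Phi_I'(x)=\tfrac{1}{\eta}\begin{pmatrix}0\\ *\end{pmatrix}+\cO(1)$ and $P_0'=0$, and item 1 is quoted from Lemmas 4.12 and 4.14 of \cite{blanc2017vpaw1d}. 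Your building-block bounds on the wave functions themselves ($\|\phi_i^I-\widetilde\phi_i^I\|_{\infty,\eta,I}=\cO(\eta)$ with only $\cO(1)$ derivative, $\|(\widetilde\phi_i^I)'\|_{\infty,\eta,I}=\cO(1)$) are correct, but they cannot be paired with individual projector bounds; any repair must keep the coefficients $\psh{\widetilde{p}^I}{f}$ assembled in the combinations controlled by $M_\eta$ --- or, for the first bound of item 3, observe that $f\mapsto\psh{\widetilde{p}^I}{f}^T\widetilde{\Phi}_I$ is the $\rho_\eta$-weighted orthogonal projection onto $\mathrm{span}\{\widetilde\phi_i^I\}$, a contraction in the weighted $L^2$ norm, and invoke equivalence of norms on the fixed-dimensional space of rescaled even polynomials of degree at most $2d-2$.
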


\begin{proof}
\begin{enumerate}
\item Proof of this statement can be found in \cite{blanc2017vpaw1d} (Lemmas 4.12 and 4.14).
\item By Lemma \ref{lem:matrice_moche}, 
\begin{equation*}
\psh{\widetilde{p}^I}{f}^T \left( \Phi_I(x) - \widetilde{\Phi}_I(x)\right) = \left( M_\eta\itg{-1}{1}{\rho(t) f(\eta t) P(t)}{t} \right)^T \left( \begin{pmatrix} C_1^{-1} \\ 0 \end{pmatrix} \Phi_I(x) - P(x/\eta) \right).
\end{equation*}
Applying the Cauchy-Schwarz inequality to  $\itg{-1}{1}{\rho(t) f(\eta t) P(t)}{t}$ suffices to prove the estimate. 
\item By item 2 of Lemma \ref{lem:matrice_moche}, 
$$
\psh{\widetilde{p}^I}{f}^T \widetilde{\Phi}_I(x) = \left( M_\eta \itg{-1}{1}{\rho(t) f(\eta t) P(t)}{t} \right)^T  P(x/\eta) .
$$
Thus the first inequality follows from the uniform boundedness of $M_\eta$ with respect to $\eta$ (item 1 of Lemma \ref{lem:matrice_moche}). For the second inequality, we proceed the same way and conclude using item 3 of Lemma \ref{lem:matrice_moche}.
\item For the first inequality, we simply replace Step 1 in the proof of Lemma 4.12 in \cite{blanc2017vpaw1d} by 
\begin{enumerate}
\item $\frac{1}{\eta} P'(x/\eta) = \frac{1}{\eta} \begin{pmatrix} 0 \\ * \end{pmatrix} + \cO(1)$
\end{enumerate}
and keep on the proof. For the second inequality, we replace Step 1 in the proof of Lemma 4.12 in \cite{blanc2017vpaw1d} by item 3 of Lemma \ref{lem:matrice_moche}.
\end{enumerate}
\end{proof}

\subsection{PAW method without pseudopotentials}
\label{sec:truncated_PAW}

The main idea of the proof is to use that the PAW operator $H^N$ \eqref{eq:H_N} (respectively $S^N$ \eqref{eq:S_N}) is close to the VPAW operator $\widetilde{H}$ \eqref{eq:H_VPAW} (resp. $\widetilde{S}$ \eqref{eq:S_VPAW}), in a sense that will be clearly stated. Then it is possible to use this connexion and bound the error on the PAW eigenvalue $E^{(\eta)}$, since the VPAW generalized eigenvalue problem \eqref{eq:H_VPAW_eig_pb} has the same eigenvalues as \eqref{eq:H_mol}. 

\begin{prop}
\label{prop:H_N-widetilde-H}
Let $H^N$, $S^N$, $\widetilde{H}$ and $\widetilde{S}$ be defined by Equations (\ref{eq:H_N}), (\ref{eq:S_N}), \eqref{eq:H_VPAW} and \eqref{eq:S_VPAW} respectively. Then we have for $f \in H^1_\mathrm{per}(0,1)$
\begin{equation}
\psh{f}{\widetilde{H}f}  = \psh{f}{H^Nf}  +  2 \sumlim{I=\lbrace 0,a \rbrace}{} \psh{ \psh{\widetilde{p}^I}{f}^T (\Phi_I - \widetilde{\Phi}_I)}{H \left( f - \psh{\widetilde{p}^I}{f}^T \widetilde{\Phi}_I \right)} ,
\end{equation}
and
\begin{equation}
\psh{f}{\widetilde{S}f}  = \psh{f}{S^Nf}  +  2 \sumlim{I=\lbrace 0,a \rbrace}{} \psh{ \psh{\widetilde{p}^I}{f}^T (\Phi_I - \widetilde{\Phi}_I)}{  f -  \psh{\widetilde{p}^I}{f}^T \widetilde{\Phi}_I }  .
\end{equation}
\end{prop}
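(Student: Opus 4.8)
The plan is to expand both quadratic forms directly and reduce the two claimed identities to an elementary algebraic manipulation, the only analytic inputs being the symmetry of the bilinear form attached to $H$ and the disjointness of the regions on which $T_{0,N}$ and $T_{a,N}$ act. Throughout I work with the symmetric bilinear form $(g,h) \mapsto \psh{g}{Hh}$ on $H^1_\mathrm{per}(0,1)$: this is well defined because $H^1_\mathrm{per}(0,1)$ embeds into $C^0$, so the Dirac terms make sense as point evaluations, and it is symmetric, $\psh{g}{Hh} = \psh{h}{Hg}$. For a fixed $f \in H^1_\mathrm{per}(0,1)$ and $I \in \{0,a\}$ I introduce the shorthands $u_I := \psh{\widetilde{p}^I}{f}^T(\Phi_I - \widetilde{\Phi}_I)$, $v_I := \psh{\widetilde{p}^I}{f}^T\Phi_I$ and $w_I := \psh{\widetilde{p}^I}{f}^T\widetilde{\Phi}_I$, so that $u_I = v_I - w_I$, $T_{I,N}f = u_I$ and $(\mathrm{Id}+T_N)f = f + u_0 + u_a$.

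First I would expand the left-hand side. Since $\widetilde{H} = (\mathrm{Id}+T_N^*)H(\mathrm{Id}+T_N)$, moving the adjoint across gives $\psh{f}{\widetilde{H}f} = \psh{(\mathrm{Id}+T_N)f}{H(\mathrm{Id}+T_N)f}$, and expanding the bilinear form in $f + u_0 + u_a$ and using symmetry yields
\[
\psh{f}{\widetilde{H}f} = \psh{f}{Hf} + \sum_{I \in \{0,a\}} \Big( 2\psh{f}{Hu_I} + \psh{u_I}{Hu_I} \Big) + 2\psh{u_0}{Hu_a}.
\]
The crucial point is that the last cross term vanishes: $u_0$ is supported in $\bigcup_k[-\eta+k,\eta+k]$ and $u_a$ in $\bigcup_k[a-\eta+k,a+\eta+k]$, and these regions are disjoint because $0 < \eta < \min(\tfrac a2,\tfrac{1-a}2)$. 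Since $H$ is a local operator (a second-order differential operator plus potentials carried by the two lattices), $\psh{u_0}{Hu_a}$ is made up only of an integral of $u_0'u_a'$ and of point values of $u_0 u_a$ on the lattices, all of which are zero by disjointness of supports.

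Next I would expand the right-hand side. Recognising the double sums in \eqref{eq:H_N} as quadratic forms, bilinearity gives, for each $I$,
\[
\sum_{i,j=1}^N \psh{\widetilde{p}_i^I}{f}\,\psh{\phi_i^I}{H\phi_j^I}\,\psh{\widetilde{p}_j^I}{f} = \psh{v_I}{Hv_I}, \qquad \sum_{i,j=1}^N \psh{\widetilde{p}_i^I}{f}\,\psh{\widetilde{\phi}_i^I}{H\widetilde{\phi}_j^I}\,\psh{\widetilde{p}_j^I}{f} = \psh{w_I}{Hw_I},
\]
so that $\psh{f}{H^N f} = \psh{f}{Hf} + \sum_I \big( \psh{v_I}{Hv_I} - \psh{w_I}{Hw_I} \big)$. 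Subtracting, the term $\psh{f}{Hf}$ cancels and it remains to verify, for each $I$ separately, that
\[
2\psh{f}{Hu_I} + \psh{u_I}{Hu_I} - \psh{v_I}{Hv_I} + \psh{w_I}{Hw_I} = 2\psh{u_I}{H(f - w_I)}.
\]
Using $u_I = v_I - w_I$ one has $\psh{u_I}{Hu_I} = \psh{v_I}{Hv_I} - 2\psh{v_I}{Hw_I} + \psh{w_I}{Hw_I}$, so the $v$- and $w$-terms collapse to $-2\psh{v_I - w_I}{Hw_I} = -2\psh{u_I}{Hw_I}$; combining this with $2\psh{f}{Hu_I} = 2\psh{u_I}{Hf}$ from symmetry gives exactly $2\psh{u_I}{H(f-w_I)}$, which is the claim once the definitions of $u_I$ and $w_I$ are substituted back. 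The identity for $\widetilde{S}$ and $S^N$ then follows verbatim upon replacing the bilinear form $\psh{\cdot}{H\cdot}$ by the $L^2_\mathrm{per}$ inner product $\psh{\cdot}{\cdot}$ (that is, taking $H = \mathrm{Id}$), which is likewise symmetric and for which $\psh{u_0}{u_a}=0$ by disjointness of supports.

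I expect no genuine analytic obstacle here: the computation is essentially bookkeeping. The two points deserving care are the vanishing of the cross term $\psh{u_0}{Hu_a}$ — which relies on the locality of $H$ together with the separation condition on $\eta$ that keeps the two atomic regions disjoint — and the sign tracking when collapsing the $v_I,w_I$ terms through $u_I = v_I - w_I$. Getting the algebra exactly right, rather than any estimate, is the main thing to be careful about.
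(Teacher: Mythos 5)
Your proposal is correct and takes essentially the same route as the paper's proof: both expand the quadratic form of $\widetilde{H}$ over $f + T_{0,N}f + T_{a,N}f$ using the disjointness of the two atomic regions, recognize the double sums in $H^N$ as the quadratic forms $\psh{v_I}{Hv_I} - \psh{w_I}{Hw_I}$, and collapse the cross terms through $u_I = v_I - w_I$ and symmetry of the form, with the $\widetilde{S}$--$S^N$ identity obtained by the same computation with $H$ replaced by the identity. Your explicit justification of the vanishing of $\psh{u_0}{Hu_a}$ via locality of $H$ and the condition $\eta < \min(\tfrac{a}{2}, \tfrac{1-a}{2})$ merely spells out what the paper states as ``$T_{0,N}$ and $T_{a,N}$ act on strictly distinct regions.''
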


\begin{proof}
Using that $T_{0,N}$ and $T_{a,N}$ act on strictly distinct region, we have for $f \in H^1_\mathrm{per}(0,1)$
\begin{align*}
\psh{f}{\widetilde{H}  f} & = \psh{f + \sumlim{I=\lbrace 0,a \rbrace}{} \psh{\widetilde{p}^I}{f}^T (\Phi_I - \widetilde{\Phi}_I)}{H \Big( f + \sumlim{I=\lbrace 0,a \rbrace}{} \psh{\widetilde{p}^I}{f}^T (\Phi_I - \widetilde{\Phi}_I) \Big)} \\
& = \psh{f}{Hf} + \sumlim{I=\lbrace 0,a \rbrace}{} 2 \psh{f}{H \psh{\widetilde{p}^I}{f}^T (\Phi_I - \widetilde{\Phi}_I)} \\
& \qquad \qquad \qquad \qquad +  \sumlim{I=\lbrace 0,a \rbrace}{}  \psh{\psh{\widetilde{p}^I}{f}^T (\Phi_I - \widetilde{\Phi}_I)}{H \psh{\widetilde{p}^I}{f}^T (\Phi_I - \widetilde{\Phi}_I)} \\
& = \psh{f}{Hf} + \sumlim{I=\lbrace 0,a \rbrace}{} 2 \psh{f}{H \psh{\widetilde{p}^I}{f}^T (\Phi_I - \widetilde{\Phi}_I)} + \psh{\psh{\widetilde{p}^I}{f}^T \Phi_I}{H \psh{\widetilde{p}^I}{f}^T \Phi_I} \\
& \qquad - 2 \psh{\psh{\widetilde{p}^I}{f}^T \widetilde{\Phi}_I}{H \psh{\widetilde{p}^I}{f}^T \Phi_I} + \psh{\psh{\widetilde{p}^I}{f}^T \widetilde{\Phi}_I}{H \psh{\widetilde{p}^I}{f}^T \widetilde{\Phi}_I} .
\end{align*}
Notice that for each $I$, we have
\begin{equation*}
\begin{split}
\psh{\psh{\widetilde{p}^I}{f}^T \widetilde{\Phi}_I}{H \psh{\widetilde{p}^I}{f}^T \Phi_I} = \psh{\psh{\widetilde{p}^I}{f}^T \right. &  \widetilde{\Phi}_I  \left.}{H \psh{\widetilde{p}^I}{f}^T (\Phi_I - \widetilde{\Phi}_I)} \\
& + \psh{ \psh{\widetilde{p}^I}{f}^T \widetilde{\Phi}_I }{H \psh{\widetilde{p}^I}{f}^T \widetilde{\Phi}_I }.
\end{split}
\end{equation*} 
Hence
\begin{align*}
 \psh{f}{\widetilde{H} f} & =  \psh{f}{Hf} + \sumlim{I=\lbrace 0,a \rbrace}{} \psh{\psh{\widetilde{p}^I}{f}^T \Phi_I}{H \psh{\widetilde{p}^I}{f}^T \Phi_I} - \psh{ \psh{\widetilde{p}^I}{f}^T \widetilde{\Phi}_I }{H \psh{\widetilde{p}^I}{f}^T \widetilde{\Phi}_I } \\
& \qquad + 2 \psh{f - \psh{\widetilde{p}^I}{f}^T \widetilde{\Phi}_I}{H \psh{\widetilde{p}^I}{f}^T (\Phi_I - \widetilde{\Phi}_I)} \\
& = \psh{f}{H^N f} + \sumlim{I=\lbrace 0,a \rbrace}{} 2 \psh{f - \psh{\widetilde{p}^I}{f}^T \widetilde{\Phi}_I}{H \psh{\widetilde{p}^I}{f}^T (\Phi_I - \widetilde{\Phi}_I)}.
\end{align*}
The second identity is proved the same way. 
\end{proof}

Before proving Theorem \ref{thm:E_PAW_bound}, we will state some properties of the operators $\widetilde{S}$ and $S^N$.

\begin{lem}
\label{lem:widetilde_S_L2_bound}
The operators $\widetilde{S}$ and $S^N$ satisfies the following properties
\begin{enumerate}
\item there exists a constant $C$ independent of $\eta$ such that for all $f \in H^1_\mathrm{per}(0,1)$;
$$
\left| \psh{f}{\widetilde{S} f} \right| \leq C \|f\|_{L^2_\mathrm{per}}^2 .
$$
\item there exists a constant $C$ independent of $\eta$ such that for all $f \in H^1_\mathrm{per}(0,1)$;
$$
 | \psh{f}{{S^N} f} | \leq C \|f\|_{L^2_\mathrm{per}}^2.
$$
\item there exists a constant $C$ independent of $\eta$ such that for all $f \in H^1_\mathrm{per}(0,1)$;
$$
\left| \psh{f}{\widetilde{S} f} -  \psh{f}{{S^N} f} \right| \leq C \eta^2 \|f\|_{H^1_\mathrm{per}}^2.
$$
\item let $\widetilde{\psi}$ be a generalized eigenfunction of \eqref{eq:H_VPAW_eig_pb}, then there exists a positive constant $C$ independent of $\eta$ such that
$$
\left| \psh{\widetilde{\psi}}{\widetilde{S} \widetilde{\psi}} -  \psh{\widetilde{\psi}}{{S^N} \widetilde{\psi}} \right| \leq C \eta^{2N+2} \| \widetilde{\psi} \|_{H^1_\mathrm{per}}.
$$
\item there exists a constant $C$ independent of $\eta$ such that for all $f \in H^1_\mathrm{per}(0,1)$;
$$
\left| \psh{f}{S^N f} -  \psh{f}{f} \right| \leq C \eta \|f\|_{H^1_\mathrm{per}}^2.
$$
\end{enumerate}
\end{lem}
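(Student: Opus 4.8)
The plan is to dispatch items 1 and 2 by a crude $L^2$-boundedness estimate, items 3 and 5 through the factorized identity of Proposition \ref{prop:H_N-widetilde-H}, and to reserve the real work for item 4, which needs that identity together with a parity cancellation and the sharp bound of Lemma \ref{lem:approximation}. The single pointwise estimate underlying items 1 and 2 is the one recorded in item 2 of Lemma \ref{lem:widetilde_p_stuff}: items 1--2 of Lemma \ref{lem:matrice_moche} combined with Cauchy--Schwarz applied to $\int_{-1}^{1}\rho(t)f(\eta t)P(t)\,dt$ give, for $x$ in the $\eta$-ball, both $|\langle\widetilde{p}^I,f\rangle^T(\Phi_I-\widetilde\Phi_I)(x)|\le C\eta^{-1/2}\|f\|_{L^2_\mathrm{per}}$ and $|\langle\widetilde{p}^I,f\rangle^T\widetilde\Phi_I(x)|\le C\eta^{-1/2}\|f\|_{L^2_\mathrm{per}}$. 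Since each of these functions is supported in an interval of length $2\eta$, squaring and integrating converts the $\eta^{-1/2}$ into an $\cO(1)$ factor, so both have $L^2$-norm $\le C\|f\|_{L^2_\mathrm{per}}$. Item 1 is then immediate from $\langle f,\widetilde{S}f\rangle=\|(\mathrm{Id}+T_N)f\|_{L^2_\mathrm{per}}^2$ and the triangle inequality; item 2 follows by taking the second identity of Proposition \ref{prop:H_N-widetilde-H}, bounding the two factors of the correction term in $L^2$ of the $\eta$-ball by $C\|f\|_{L^2_\mathrm{per}}$ each, and controlling the remaining $\langle f,\widetilde{S}f\rangle$ by item 1.

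For items 3 and 5 I would exploit that the factor $\Phi_I-\widetilde\Phi_I$ gains a power of $\eta$ on $H^1$ functions. In the second identity of Proposition \ref{prop:H_N-widetilde-H} the difference $\langle f,\widetilde{S}f\rangle-\langle f,S^Nf\rangle$ is precisely the correction term; item 1 of Lemma \ref{lem:widetilde_p_stuff} bounds its first factor by $C\eta\|f\|_{H^1_\mathrm{per}}$, while item 3 and the one-dimensional Sobolev embedding bound the second factor $f-\langle\widetilde{p}^I,f\rangle^T\widetilde\Phi_I$ in $L^\infty$ by $C\|f\|_{H^1_\mathrm{per}}$; integrating the product over the length-$2\eta$ interval yields item 3 (with the stronger constant $\eta^2\le\eta_0\eta$). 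Item 5 is handled the same way after writing $\langle f,S^Nf\rangle-\langle f,f\rangle=\sum_{I\in\{0,a\}}\int_{\eta\text{-ball}}\bigl(\langle\widetilde{p}^I,f\rangle^T(\Phi_I-\widetilde\Phi_I)\bigr)\bigl(\langle\widetilde{p}^I,f\rangle^T(\Phi_I+\widetilde\Phi_I)\bigr)$, where I use $\phi_i^I=\widetilde\phi_i^I$ off the $\eta$-ball to localize $\langle\phi_i^I,\phi_j^I\rangle-\langle\widetilde\phi_i^I,\widetilde\phi_j^I\rangle$; the first factor is $\cO(\eta)\|f\|_{H^1_\mathrm{per}}$ by item 1 and the second is $\cO(1)\|f\|_{H^1_\mathrm{per}}$ by item 3, again giving $\cO(\eta^2)\|f\|_{H^1_\mathrm{per}}^2$.

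The heart of the lemma is item 4, and this is where I expect the main obstacle. Taking $f=\widetilde\psi$ in Proposition \ref{prop:H_N-widetilde-H} reduces the claim to bounding $\langle\langle\widetilde{p}^I,\widetilde\psi\rangle^T(\Phi_I-\widetilde\Phi_I),\,\widetilde\psi-\langle\widetilde{p}^I,\widetilde\psi\rangle^T\widetilde\Phi_I\rangle$, and the naive item-3 estimate only delivers $\eta^2$. The key observation is that the integration effectively runs over the \emph{symmetric} interval centered at the site $I$ (since $\Phi_I-\widetilde\Phi_I$ is supported there) and that its first factor is \emph{even} about $I$, because $\phi_i^I$, $\widetilde\phi_i^I$, and hence $\widetilde{p}_i^I$, are all even about the site. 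Decomposing the second factor into its parts even and odd about $I$, the odd part integrates to zero against the even first factor, so only $\widetilde\psi_e-\langle\widetilde{p}^I,\widetilde\psi\rangle^T\widetilde\Phi_I$ survives; on this even remainder Lemma \ref{lem:approximation} supplies the sharp bound $C\eta^{2N}$ in $L^\infty$ of the $\eta$-ball. Multiplying by the $C\eta\|\widetilde\psi\|_{H^1_\mathrm{per}}$ bound on the first factor and integrating over the length-$2\eta$ interval then yields the claimed $C\eta^{2N+2}\|\widetilde\psi\|_{H^1_\mathrm{per}}$. The delicate point, and the crux of the argument, is precisely this interplay: the parity cancellation is what removes the otherwise $\cO(1)$ odd contribution so that the high-order VPAW approximation estimate can be brought to bear, and one must check carefully that $\langle\widetilde{p}^I,\widetilde\psi\rangle^T\widetilde\Phi_I$ indeed reproduces only the even-about-$I$ part of $\widetilde\psi$.
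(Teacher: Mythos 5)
Your proposal is correct, and for items 1, 2, 3 and 5 it is essentially the paper's own argument: items 1 and 2 rest on the pointwise bounds $|\langle\widetilde{p}^I,f\rangle^T(\Phi_I-\widetilde{\Phi}_I)(x)|\le C\eta^{-1/2}\|f\|_{L^2_\mathrm{per}}$ and $|\langle\widetilde{p}^I,f\rangle^T\widetilde{\Phi}_I(x)|\le C\eta^{-1/2}\|f\|_{L^2_\mathrm{per}}$ obtained from Lemma \ref{lem:matrice_moche} and Cauchy--Schwarz, squared and integrated over the length-$2\eta$ support, exactly as in the paper; item 3 combines the identity of Proposition \ref{prop:H_N-widetilde-H} with items 1 and 3 of Lemma \ref{lem:widetilde_p_stuff} (the paper cites ``items 2 and 3'', which appears to be a slip --- your pairing is the one that actually delivers $\eta^2$); and for item 5 you bypass the paper's triangle inequality through $\langle f,\widetilde{S}f\rangle$ by localizing $\langle\phi_i^I,\phi_j^I\rangle-\langle\widetilde{\phi}_i^I,\widetilde{\phi}_j^I\rangle$ to the ball and factorizing as a difference of squares, which in fact yields the stronger $\mathcal{O}(\eta^2)$ bound; both routes are valid. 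The genuinely valuable divergence is item 4. The paper's proof quotes Lemma \ref{lem:approximation} as if it bounded $\|\widetilde{\psi}-\langle\widetilde{p}^I,\widetilde{\psi}\rangle^T\widetilde{\Phi}_I\|_{\infty,\eta,I}$ by $C\eta^{2N}$, but that lemma controls only the \emph{even} part $\widetilde{\psi}_e$; the odd part of $\widetilde{\psi}$ near a site is merely $\mathcal{O}(\eta)$ (this is exactly the bound $|\psi_o(x)|\le C\eta$ invoked in Section \ref{subsec:proof_upper_bound_PAW_pseudo}), so the paper's H\"older step, read literally, would only give $\mathcal{O}(\eta^3)$ rather than $\mathcal{O}(\eta^{2N+2})$. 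Your parity cancellation --- the factor $\langle\widetilde{p}^I,\widetilde{\psi}\rangle^T(\Phi_I-\widetilde{\Phi}_I)$ is even about the site because the $\phi_i^I$ and $\widetilde{\phi}_i^I$ are, so the odd-about-$I$ part of the second factor integrates to zero on the symmetric interval, and Lemma \ref{lem:approximation} applies to the surviving even remainder --- is precisely the repair needed and is surely what the paper intends implicitly. One trim: you neither need nor, since the paper never assumes $\rho$ even, can freely assert that the projectors $\widetilde{p}_i^I$ are even; the quantities $\langle\widetilde{p}^I,\widetilde{\psi}\rangle$ are scalar coefficients, and only the parity of $\Phi_I-\widetilde{\Phi}_I$ and of $\widetilde{\Phi}_I$ enters the cancellation.
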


\begin{proof}
\begin{enumerate}
\item By item 2 of Lemma \ref{lem:widetilde_p_stuff}, there exists a constant $C$ independent of $\eta$ and $x$ such that for all $x \in (-\tfrac{1}{2},\tfrac{1}{2})$ and for all $0 < \eta \leq \eta_0$
$$
\left|\psh{\widetilde{p}^I}{f}^T (\Phi_I(x) - \widetilde{\Phi}_I(x)) \right| \leq \frac{C}{\eta^{1/2}} \|f\|_{L^2_\mathrm{per}}.
$$
Then, we have
\begin{align*}
\|T_{0,N}f\|_{L^2_\mathrm{per}}^2 & = \itg{0}{1}{\left| \psh{\widetilde{p}}{f}^T (\Phi(x) - \widetilde{\Phi}(x)) \right|^2}{x} \\
& \leq  \itg{-\eta}{\eta}{\left| \psh{\widetilde{p}}{f}^T (\Phi(x) - \widetilde{\Phi}(x)) \right|^2}{x} \\
& \leq C \| f \|_{L^2_\mathrm{per}}^2.
\end{align*}
Similarly, $\|T_{a,N}f\|_{L^2_\mathrm{per}} \leq C \| f \|_{L^2_\mathrm{per}}$ and the result follows.

\item By Proposition \ref{prop:H_N-widetilde-H}, for all $f \in H^1_\mathrm{per}(0,1)$
$$
\psh{f}{S^N f} = \psh{f}{\widetilde{S} f} - 2 \sumlim{I=\lbrace 0,a \rbrace}{} \psh{ \psh{\widetilde{p}^I}{f}^T (\Phi_I - \widetilde{\Phi}_I)}{  f -  \psh{\widetilde{p}^I}{f}^T \widetilde{\Phi}_I }.
$$
From items 1 and 2 of Lemma \ref{lem:matrice_moche}, it is easy to show that there exists a constant $C$ independent of $\eta$ and $x$ such that for all $x \in (-\eta,\eta)$, $0 < \eta \leq \eta_0$ and $f \in H^1_\mathrm{per}(0,1)$
$$
\left|\psh{\widetilde{p}^I}{f}^T \widetilde{\Phi}_I(x) \right| \leq \frac{C}{\eta^{1/2}} \|f\|_{L^2_\mathrm{per}}.
$$
Hence
\begin{align*}
\left| \psh{f}{S^N f} \right| & \leq \left|\psh{f}{\widetilde{S} f} \right| + 2 \sumlim{I \in \lbrace 0, a \rbrace}{} \left|\psh{ \psh{\widetilde{p}^I}{f}^T (\Phi_I - \widetilde{\Phi}_I)}{  f -  \psh{\widetilde{p}^I}{f}^T \widetilde{\Phi}_I} \right| \\
& \leq C  \|f\|_{L^2_\mathrm{per}}^2 + \sumlim{I \in \lbrace 0, a \rbrace}{} \left\|\psh{\widetilde{p}^I}{f}^T (\Phi_I - \widetilde{\Phi}_I) \right\|_{L^2_\mathrm{per}}  ( \|f \|_{L^2_\mathrm{per}} + \left\| \psh{\widetilde{p}^I}{f}^T \widetilde{\Phi}_I \right\|_{2,\eta,I}) \\
& \leq C \|f\|_{L^2_\mathrm{per}}^2.
\end{align*}

\item This is an easy consequence of Proposition \ref{prop:H_N-widetilde-H} and items 2 and 3 of Lemma \ref{lem:widetilde_p_stuff}.

\item By Proposition \ref{prop:H_N-widetilde-H}
$$
\psh{\widetilde{\psi}}{S^N \widetilde{\psi}} = \psh{\widetilde{\psi}}{\widetilde{S} \widetilde{\psi}} - 2 \sumlim{I \in \lbrace 0, a \rbrace}{} \psh{ \psh{\widetilde{p}^I}{\widetilde{\psi}}^T (\Phi_I - \widetilde{\Phi}_I)}{ \widetilde{\psi}  -  \psh{\widetilde{p}^I}{\widetilde{\psi}}^T \widetilde{\Phi}_I }.
$$
By Lemma \ref{lem:approximation}, we have for each $I \in \lbrace 0, a \rbrace$
\begin{equation*}
\left\| \widetilde{\psi} - \psh{\widetilde{p}^I}{\widetilde{\psi}}^T \widetilde{\Phi}_I \right\|_{\infty,\eta,I} \leq C \eta^{2N},
\end{equation*}
where $C>0$ is independent of $\eta$. Hence, using item 1 of Lemma \ref{lem:widetilde_p_stuff},
\begin{align*}
\left| \psh{ \psh{\widetilde{p}^I}{\widetilde{\psi}}^T (\Phi_I - \widetilde{\Phi}_I)}{  \widetilde{\psi} -  \psh{\widetilde{p}^I}{\widetilde{\psi}}^T \widetilde{\Phi}_I } \right| & \leq  \left\| \psh{\widetilde{p}^I}{\widetilde{\psi}}^T (\Phi_I - \widetilde{\Phi}_I) \right\|_{1,\eta,I} \left\| \widetilde{\psi} -  \psh{\widetilde{p}^I}{\widetilde{\psi}}^T \widetilde{\Phi}_I \right\|_{\infty,\eta,I} \\
& \leq C \eta^{2N+2} \| \widetilde{\psi} \|_{H^1_\mathrm{per}}.
\end{align*}
and the result follows. 

\item By item 3 of Lemma \ref{lem:widetilde_S_L2_bound}, we have for all $f \in H^1_\mathrm{per}(0,1)$
$$
\left| \psh{f}{\widetilde{S} f} -  \psh{f}{{S^N} f} \right| \leq C \eta^2 \|f\|_{H^1_\mathrm{per}}^2,
$$
where $C$ is a constant independent of $\eta$ and $f$. 

By item 1 of Lemma \ref{lem:widetilde_p_stuff}, we can easily show that 
$$
\left| \psh{f}{\widetilde{S} f} -  \psh{f}{f} \right| \leq C \eta \|f\|_{H^1_\mathrm{per}}^2, 
$$
with a constant $C$ independent of $\eta$ and $f$. By a triangular inequality, the result follows.
\end{enumerate}
\end{proof}

Before moving to the proof of the upper bound on the PAW eigenvalue \eqref{eq:PAW_eig_pb}, we show that there exists a constant independent of $\eta$ that bounds the $H^1_\mathrm{per}$-norm of $L^2_\mathrm{per}$-normalized  generalized eigenfunctions $\widetilde{\psi}$ associated to the first generalized eigenvalue of $\widetilde{H}$ for all $0 < \eta \leq \eta_0$.

\begin{lem}
\label{lem:H1_boundedness_widetilde_psi}
Let $\widetilde{\psi}$ be an $L^2_\mathrm{per}$-normalized generalized eigenfunction associated to the lowest eigenvalue of \eqref{eq:H_VPAW_eig_pb}. Then there exists a positive constant $C$ independent of $\eta$ such that for all $0 < \eta \leq \eta_0$
$$
\| \widetilde{\psi} \|_{H^1_\mathrm{per}} \leq C.
$$ 
\end{lem}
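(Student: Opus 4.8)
The plan is to transfer an $H^1_\mathrm{per}$ bound from $\psi := (\mathrm{Id}+T_N)\widetilde{\psi}$ --- which is a genuine eigenfunction of $H$ --- back to $\widetilde{\psi}$, exploiting that $T_N\widetilde{\psi}$ is supported in the thin PAW regions.

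First I would record that $\psi = (\mathrm{Id}+T_N)\widetilde{\psi}$ solves $H\psi = E_0\psi$: the generalized problem \eqref{eq:H_VPAW_eig_pb} and the original problem \eqref{eq:H_mol} share the same spectrum, and $E_0$ is the lowest eigenvalue. Thus $\psi$ lies in the $E_0$-eigenspace of $H$, which is a \emph{fixed}, finite-dimensional subspace of $H^1_\mathrm{per}(0,1)$ independent of $\eta$; on it the $H^1_\mathrm{per}$ and $L^2_\mathrm{per}$ norms are equivalent, so $\|\psi\|_{H^1_\mathrm{per}} \le C\|\psi\|_{L^2_\mathrm{per}}$ with $C$ independent of $\eta$. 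It then remains to bound $\|\psi\|_{L^2_\mathrm{per}}$: by the definition \eqref{eq:S_VPAW} of $\widetilde{S}$ one has $\|\psi\|_{L^2_\mathrm{per}}^2 = \psh{\widetilde{\psi}}{\widetilde{S}\widetilde{\psi}}$, which by item 1 of Lemma \ref{lem:widetilde_S_L2_bound} is $\le C\|\widetilde{\psi}\|_{L^2_\mathrm{per}}^2 = C$. Hence $\|\psi\|_{H^1_\mathrm{per}} \le C$ uniformly in $\eta$, and in particular $\|\psi'\|_{L^2_\mathrm{per}} \le C$.

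Next, writing $\widetilde{\psi} = \psi - T_N\widetilde{\psi}$ and differentiating gives $\widetilde{\psi}' = \psi' - (T_N\widetilde{\psi})'$, with $(T_N\widetilde{\psi})' = \sum_{I \in \{0,a\}} \psh{\widetilde{p}^I}{\widetilde{\psi}}^T(\Phi_I' - \widetilde{\Phi}_I')$. Since $\Phi_I$ and $\widetilde{\Phi}_I$ coincide outside the acting region of $T_{0,N}$ (resp. $T_{a,N}$), this integrand is supported on a set of measure $O(\eta)$; on that set the pointwise estimate of item 1 of Lemma \ref{lem:widetilde_p_stuff} yields $\left|\psh{\widetilde{p}^I}{\widetilde{\psi}}^T(\Phi_I'(x)-\widetilde{\Phi}_I'(x))\right| \le C\|\widetilde{\psi}\|_{H^1_\mathrm{per}}$. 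Integrating $|\cdot|^2$ over a region of length $2\eta$ therefore produces the crucial gain of a power of $\eta$: $\|(T_N\widetilde{\psi})'\|_{L^2_\mathrm{per}} \le C\eta^{1/2}\|\widetilde{\psi}\|_{H^1_\mathrm{per}}$.

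Finally I would close the argument by absorption. Combining the two previous steps, $\|\widetilde{\psi}'\|_{L^2_\mathrm{per}} \le \|\psi'\|_{L^2_\mathrm{per}} + \|(T_N\widetilde{\psi})'\|_{L^2_\mathrm{per}} \le C + C\eta^{1/2}\|\widetilde{\psi}\|_{H^1_\mathrm{per}}$, and since $\|\widetilde{\psi}\|_{L^2_\mathrm{per}} = 1$ we have $\|\widetilde{\psi}\|_{H^1_\mathrm{per}} \le 1 + \|\widetilde{\psi}'\|_{L^2_\mathrm{per}}$. For $\eta$ small enough that $C\eta^{1/2} \le \tfrac12$, the term $C\eta^{1/2}\|\widetilde{\psi}\|_{H^1_\mathrm{per}}$ can be moved to the left-hand side, giving $\|\widetilde{\psi}\|_{H^1_\mathrm{per}} \le C$. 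The main subtlety is precisely this apparent circularity: the only available control on $(T_N\widetilde{\psi})'$ is in terms of $\|\widetilde{\psi}\|_{H^1_\mathrm{per}}$ --- Lemma \ref{lem:widetilde_p_stuff} being essentially sharp, since the pseudo wave function derivatives do blow up as $\eta \to 0$ --- and what rescues the estimate is the factor $\eta^{1/2}$ coming from the thinness of the PAW regions, which renders the offending term subcritical and permits the bootstrap. For the remaining compact range $\eta \in [\eta_1,\eta_0]$ bounded away from $0$, $\|\widetilde{\psi}\|_{H^1_\mathrm{per}}$ is finite for each $\eta$ and varies continuously with it, hence is bounded there as well; together these give the uniform bound on all of $(0,\eta_0]$.
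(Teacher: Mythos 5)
Your argument is correct, and it reaches the paper's conclusion by a genuinely different route in one step, so a comparison is in order. Both proofs rest on the same two quantitative ingredients: the identity $\psh{\widetilde{\psi}}{\widetilde{S}\widetilde{\psi}} = \|(\mathrm{Id}+T_N)\widetilde{\psi}\|_{L^2_\mathrm{per}}^2$ combined with item 1 of Lemma \ref{lem:widetilde_S_L2_bound}, and the smallness estimate $\|T_N\widetilde{\psi}\|_{H^1_\mathrm{per}} \leq C\eta^{1/2}\|\widetilde{\psi}\|_{H^1_\mathrm{per}}$, which the paper derives from item 1 of Lemma \ref{lem:widetilde_p_stuff} exactly as you do (pointwise bound times the $\cO(\eta)$ measure of the acting regions); your closing absorption is the same mechanism as the paper's implicit factor $(1-C\eta^{1/2})$ when it bounds $\|(\mathrm{Id}+T_N)\widetilde{\psi}\|_{H^1_\mathrm{per}}$ from below. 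Where you genuinely diverge is in how the $H^1$ information is created: the paper uses the coercivity of the form of $H$ (from \cite{cances2017discretization}), applied to $(\mathrm{Id}+T_N)\widetilde{\psi}$, together with only the energy identity $\psh{\widetilde{\psi}}{\widetilde{H}\widetilde{\psi}} = E_0\psh{\widetilde{\psi}}{\widetilde{S}\widetilde{\psi}}$, whereas you exploit the spectral exactness of the VPAW problem to place $\psi$ in the fixed, $\eta$-independent, finite-dimensional $E_0$-eigenspace of $H$ and invoke equivalence of norms there to get $\|\psi\|_{H^1_\mathrm{per}} \leq C\|\psi\|_{L^2_\mathrm{per}}$. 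Your variant is more elementary (no coercivity constant, no quadratic-form manipulation), but less robust: it depends on $\widetilde{\psi}$ corresponding to an exact eigenfunction of the fixed operator $H$, so it cannot be recycled for Lemmas \ref{lem:H1_boundedness_f}, \ref{lem:E_PAW_bounded_below} and \ref{lem:H1_boundedness_fPAW}, where the eigenfunction solves only the truncated PAW problem and the coercivity route (see \eqref{eq:usefule_for_E_PAW_bddness}) is the one that survives --- presumably why the paper sets the argument up that way once and reuses it. Two minor points: your continuity-in-$\eta$ claim for the compact range $[\eta_1,\eta_0]$ is asserted rather than proved (it would require simplicity of the eigenvalue and continuity of the PAW functions in $\eta$, which you do not check); note that the paper's own proof also concludes only ``for $\eta$ sufficiently small'', so in both proofs the honest fix is simply to shrink $\eta_0$. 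And for the norm-equivalence step you should state explicitly that the $E_0$-eigenspace of $H$ is finite dimensional and independent of $\eta$ (here the ground state is in fact simple), since that is what makes your constant uniform.
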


\begin{proof}
The operator $H$ defined in \eqref{eq:H_mol} is coercive. A proof of this statement can be found in \cite{cances2017discretization}. Let $\alpha > 0$ be such that for all $f \in H^1_\mathrm{per}(0,1)$ 
$$
\psh{f}{H f} + \alpha \psh{f}{f} \geq \frac{1}{2} \|f \|_{H^1_\mathrm{per}}^2. 
$$ 
Then 
\begin{equation*}
\psh{\widetilde{\psi}}{\widetilde{H} \widetilde{\psi}} + \alpha \psh{\widetilde{\psi}}{\widetilde{S} \widetilde{\psi}} \geq \frac{1}{2} \| (\mathrm{Id} + T) \widetilde{\psi} \|_{H^1_\mathrm{per}}^2 .
\end{equation*}
By item 1 of Lemma \ref{lem:widetilde_p_stuff}, we have
$$
\|T\widetilde{\psi}\|_{H^1_\mathrm{per}} \leq C \eta^{1/2} \|\widetilde{\psi}\|_{H^1_\mathrm{per}},
$$
for some positive constant $C$ independent of $\eta$. Hence,
for $\eta$ sufficiently small, there exists a positive constant $C$ independent of $\eta$ such that
$$
(E_0 + \alpha) \psh{\widetilde{\psi}}{\widetilde{S}\widetilde{\psi}}  \geq C \| \widetilde{\psi} \|_{H^1_\mathrm{per}}^2 .
$$
Using item 1 of Lemma \ref{lem:widetilde_S_L2_bound}, we obtain
$$
C \| \widetilde{\psi} \|_{L^2_\mathrm{per}}^2  \geq \| \widetilde{\psi} \|_{H^1_\mathrm{per}}^2,
$$
and the result follows from the normalization of the eigenfunctions $\widetilde{\psi}$. 
\end{proof}

We now have all the necessary tools to prove the upper bound of Theorem \ref{thm:E_PAW_bound}.

\begin{proof}[Proof of the upper bound of Theorem \ref{thm:E_PAW_bound}]
Let $\widetilde{\psi}$ be an $L^2_\mathrm{per}$-normalized eigenvector of the lowest eigenvalue of $\widetilde{H} \widetilde{\psi} = E_0 \widetilde{S} \widetilde{\psi}$. Then by Proposition~\ref{prop:H_N-widetilde-H},
\begin{align*}
\psh{\widetilde{\psi}}{\widetilde{H}\widetilde{\psi}} &  = \psh{\widetilde{\psi}}{H^N \widetilde{\psi}} + 2 \sumlim{I=\lbrace 0,a \rbrace}{} \psh{\widetilde{\psi} - \psh{\widetilde{p}^I}{\widetilde{\psi}}^T \widetilde{\Phi}_I}{ \psh{\widetilde{p}^I}{\widetilde{\psi}}^T H ( \Phi_I - \widetilde{\Phi}_I) } .
\end{align*}
Recall that 
\begin{equation*}
\widetilde{\psi} - \psh{\widetilde{p}^I}{\widetilde{\psi}}^T \widetilde{\Phi}_I = \psi - \psh{\widetilde{p}^I}{\widetilde{\psi}}^T \Phi_I ,
\end{equation*}
which with Equation (\ref{eq:widetilde_p-Ap-psi}) yields
\begin{equation*}
\widetilde{\psi} - \psh{\widetilde{p}^I}{\widetilde{\psi}}^T \widetilde{\Phi}_I = \psi - \psh{A_I^{-1}p^I}{\psi}^T \Phi_I
\end{equation*}
Thus we have :
\begin{align}
\psh{\widetilde{\psi}}{\widetilde{H}\widetilde{\psi}}  & = \psh{\widetilde{\psi}}{H^N \widetilde{\psi}} + 2 \sumlim{I=\lbrace 0,a \rbrace}{} \psh{\psi - \psh{A_I^{-1}p^I}{\psi}^T \Phi_I }{ H \psh{\widetilde{p}^I}{\widetilde{\psi}}^T ( \Phi_I - \widetilde{\Phi}_I) } \nonumber \\
& = \psh{\widetilde{\psi}}{H^N \widetilde{\psi}} + 2 \sumlim{I=\lbrace 0,a \rbrace}{} \psh{ E_0 \psi - \psh{A_I^{-1}p^I}{\psi}^T \cE^I \Phi_I }{ \psh{\widetilde{p}^I}{\widetilde{\psi}}^T ( \Phi_I - \widetilde{\Phi}_I) } ,
\label{eq:E_eta-E_0}
\end{align} 
where we used $H \Phi_I = \cE^I \Phi_I$ in $(I-\eta,I+\eta)$ for $I \in \lbrace	0,a \rbrace$. By Lemma \ref{lem:approximation},
$$
\left\| E_0 \psi_e - \psh{A_I^{-1}p^I}{\psi}^T \cE^I \Phi_I \right\|_{\infty,\eta,I} \leq C\eta^{2N-2}.
$$
So for each $I$,
\begin{multline*}
\left| \psh{ E_0 \psi_e - \psh{A_I^{-1}p^I}{\psi}^T \cE^I \Phi_I }{ \psh{\widetilde{p}^I}{\widetilde{\psi}}^T ( \Phi_I - \widetilde{\Phi}_I) } \right| \\
 \leq \left\|  E_0 \psi_e - \psh{A_I^{-1}p^I}{\psi}^T \cE^I \Phi_I \right \|_{\infty,\eta,I} \left\| \psh{\widetilde{p}^I}{\widetilde{\psi}}^T ( \Phi_I - \widetilde{\Phi}_I) \right\|_{1,\eta,I}.
\end{multline*}
By item 1 of Lemma \ref{lem:widetilde_p_stuff}, we have
\begin{equation*}
\left\| \psh{\widetilde{p}^I}{\widetilde{\psi}}^T ( \Phi_I - \widetilde{\Phi}_I) \right\|_{1,\eta,I} \leq C \eta^2 \|\widetilde{\psi} \|_{H^1_\mathrm{per}} \leq C \eta^2 .
\end{equation*}
where we bound $\|\widetilde{\psi} \|_{H^1_\mathrm{per}}$ by means of Lemma \ref{lem:H1_boundedness_widetilde_psi}. Hence, using Lemma \ref{lem:approximation}, we obtain
\begin{equation*}
\left| \psh{ E_0 \psi - \psh{A_I^{-1}p^I}{\psi}^T \cE^I \Phi_I }{ \psh{\widetilde{p}^I}{\widetilde{\psi}}^T ( \Phi_I - \widetilde{\Phi}_I) } \right| \leq C \eta^{2N}.
\end{equation*}
Going back to Equation (\ref{eq:E_eta-E_0}), 
\begin{align*}
E_0 \psh{\widetilde{\psi}}{\widetilde{S} \widetilde{\psi}} + C \eta^{2N} & \geq \psh{\widetilde{\psi}}{H^N \widetilde{\psi}}   \\
& \geq E^{(\eta)} \psh{\widetilde{\psi}}{S^N \widetilde{\psi}} 
\end{align*}
By Lemmas \ref{lem:widetilde_S_L2_bound} and \ref{lem:H1_boundedness_widetilde_psi}, we have
\begin{equation*}
\left| \psh{\widetilde{\psi}}{\widetilde{S} \widetilde{\psi}} - \psh{\widetilde{\psi}}{S^N \widetilde{\psi}} \right| \leq C \eta^{2N+2},
\end{equation*}
which finishes the proof.
\end{proof}

\begin{lem}
\label{lem:H1_boundedness_f}
Let $f$ be an $L^2_\mathrm{per}$-normalized generalized eigenfunction associated to the lowest generalized eigenvalue of \eqref{eq:PAW_eig_pb}. Then there exists a positive constant $C$ independent of $\eta$ such that for all $0 < \eta \leq \eta_0$
$$
\| f \|_{H^1_\mathrm{per}} \leq C.
$$ 
\end{lem}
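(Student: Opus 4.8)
The plan is to mirror the argument already used for the VPAW eigenfunction in Lemma \ref{lem:H1_boundedness_widetilde_psi}, but now working directly with the PAW operators $H^N$ and $S^N$ rather than $\widetilde{H}$ and $\widetilde{S}$. The starting point is the coercivity of $H$ established in \cite{cances2017discretization}: there exists $\alpha > 0$ such that for all $g \in H^1_\mathrm{per}(0,1)$ we have $\psh{g}{Hg} + \alpha \psh{g}{g} \geq \tfrac{1}{2} \|g\|_{H^1_\mathrm{per}}^2$. Since $f$ is a generalized eigenfunction of \eqref{eq:PAW_eig_pb} for the lowest eigenvalue $E^{(\eta)}$, we have $\psh{f}{H^N f} = E^{(\eta)} \psh{f}{S^N f}$, so the goal is to relate $\psh{f}{H^N f}$ to a genuine coercive quantity and then control $\psh{f}{S^N f}$ from above by $\|f\|_{L^2_\mathrm{per}}^2$.

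First I would record the two facts about $S^N$ already available: by item 2 of Lemma \ref{lem:widetilde_S_L2_bound} we have $|\psh{f}{S^N f}| \leq C \|f\|_{L^2_\mathrm{per}}^2$, and by item 5 of the same lemma $\psh{f}{S^N f} = \psh{f}{f} + \cO(\eta)\|f\|_{H^1_\mathrm{per}}^2$, so for $\eta$ small $\psh{f}{S^N f}$ is comparable to $\|f\|_{L^2_\mathrm{per}}^2$ and in particular bounded. Next I would need the analogue of Lemma \ref{lem:widetilde_S_L2_bound} for the operator $H^N$, namely a coercivity-type lower bound
\begin{equation*}
\psh{f}{H^N f} + \alpha \psh{f}{S^N f} \geq C \|f\|_{H^1_\mathrm{per}}^2 .
\end{equation*}
To obtain this I would invoke Proposition \ref{prop:H_N-widetilde-H}, which expresses $\psh{f}{H^N f}$ as $\psh{f}{\widetilde{H} f}$ minus a cross term, together with the identity $\psh{f}{\widetilde{H} f} = \psh{(\mathrm{Id}+T_N)f}{H(\mathrm{Id}+T_N)f}$ underlying the proof of Lemma \ref{lem:H1_boundedness_widetilde_psi}. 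Applying coercivity of $H$ to $(\mathrm{Id}+T_N)f$ gives $\psh{f}{\widetilde{H} f} + \alpha \psh{f}{\widetilde{S} f} \geq \tfrac{1}{2}\|(\mathrm{Id}+T_N)f\|_{H^1_\mathrm{per}}^2$, and the bound $\|T_N f\|_{H^1_\mathrm{per}} \leq C\eta^{1/2}\|f\|_{H^1_\mathrm{per}}$ from item 1 of Lemma \ref{lem:widetilde_p_stuff} lets me absorb $T_N f$ and conclude $\|(\mathrm{Id}+T_N)f\|_{H^1_\mathrm{per}}^2 \geq \tfrac{1}{2}\|f\|_{H^1_\mathrm{per}}^2$ for $\eta$ small.

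The remaining work is to pass from $\widetilde{H},\widetilde{S}$ back to $H^N,S^N$ and to show that the difference terms do not destroy the coercivity. The cross term in Proposition \ref{prop:H_N-widetilde-H} involves $\psh{\psh{\widetilde{p}^I}{f}^T(\Phi_I-\widetilde{\Phi}_I)}{H(f - \psh{\widetilde{p}^I}{f}^T\widetilde{\Phi}_I)}$, and using items 1, 3 and 4 of Lemma \ref{lem:widetilde_p_stuff} I expect to bound it by $C\eta \|f\|_{H^1_\mathrm{per}}^2$, which is lower order and hence absorbable. The main obstacle I anticipate is precisely this estimate on the $H^N - \widetilde{H}$ cross term: unlike in the proof of the upper bound of Theorem \ref{thm:E_PAW_bound}, here $f$ is a generic $H^1_\mathrm{per}$ function rather than the special VPAW eigenfunction, so I cannot use the sharp $\eta^{2N}$ approximation of Lemma \ref{lem:approximation} and must instead rely only on the weaker $\cO(\eta)$ and $\cO(1)$ bounds of Lemma \ref{lem:widetilde_p_stuff}; care is needed because the action of $H$ produces a derivative, so the factor $\Phi_I'-\widetilde{\Phi}_I'$ is only $\cO(1)$ and not $\cO(\eta)$, and one must check that pairing it against the $\cO(\eta)$ factor $\Phi_I-\widetilde{\Phi}_I$ still yields an overall $\cO(\eta)\|f\|_{H^1_\mathrm{per}}^2$ contribution after integrating over the interval of length $2\eta$. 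Once the coercivity bound is in hand, combining it with $\psh{f}{H^N f} = E^{(\eta)}\psh{f}{S^N f}$, the upper bound $E^{(\eta)} \leq E_0 + C\eta^{2N}$ from Theorem \ref{thm:E_PAW_bound}, and the $L^2_\mathrm{per}$ control of $S^N$ yields $\|f\|_{H^1_\mathrm{per}}^2 \leq C\|f\|_{L^2_\mathrm{per}}^2 = C$, completing the proof.
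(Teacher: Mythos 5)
Your proposal is correct, but it reaches the key coercivity inequality by a different decomposition than the paper. The paper never detours through $\widetilde{H}$: it expands $\psh{f}{H^N f}$ directly from the definition \eqref{eq:H_N} as $\psh{f}{Hf} + \sum_{I} \psh{\psh{\widetilde{p}^I}{f}^T(\Phi_I+\widetilde{\Phi}_I)}{H\,\psh{\widetilde{p}^I}{f}^T(\Phi_I-\widetilde{\Phi}_I)}$, bounds the extra term by $C\eta\|f\|_{H^1_\mathrm{per}}^2$ via items 1, 3 and 4 of Lemma \ref{lem:widetilde_p_stuff} (this is \eqref{eq:terme_en_plus}), and applies the coercivity of $H$ to $f$ itself, yielding \eqref{eq:usefule_for_E_PAW_bddness}; the conclusion via the eigenvalue equation and item 5 of Lemma \ref{lem:widetilde_S_L2_bound} is then exactly as in your last step. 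You instead compare $H^N$ to $\widetilde{H}$ through Proposition \ref{prop:H_N-widetilde-H} and apply coercivity to $(\mathrm{Id}+T_N)f$, mirroring Lemma \ref{lem:H1_boundedness_widetilde_psi}. Both routes work, but note two things. First, the paper's single comparison ($H^N$ versus $H$) is shorter than your double one ($H^N$ versus $\widetilde{H}$ versus $H$), and its error term is a pairing of \emph{two} functions supported in the $\eta$-balls, each contributing a factor $\eta^{1/2}$ in $H^1$ of the ball, whence the clean $\cO(\eta)$ of \eqref{eq:terme_en_plus}; in your cross term $\psh{\psh{\widetilde{p}^I}{f}^T(\Phi_I-\widetilde{\Phi}_I)}{H\bigl(f-\psh{\widetilde{p}^I}{f}^T\widetilde{\Phi}_I\bigr)}$ the second argument contains the non-localized $f$, whose $H^1$-norm on the ball gains no factor $\eta^{1/2}$, so the straightforward estimate gives $C\eta^{1/2}\|f\|_{H^1_\mathrm{per}}^2$ rather than the $C\eta\|f\|_{H^1_\mathrm{per}}^2$ you announce --- immaterial here, since absorption into the coercive term only needs an $o(1)$ coefficient, so your proof stands. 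Second, the a priori upper bound $E^{(\eta)}\leq E_0 + C\eta^{2N}$ that both proofs implicitly need to bound $E^{(\eta)}$ from above is the upper bound of Theorem \ref{thm:bound_trunc_PAW}, proved just before this lemma using only the VPAW eigenfunction $\widetilde{\psi}$ (hence no circularity); your citation of Theorem \ref{thm:E_PAW_bound} for it reproduces a mislabel that the paper itself makes in Section \ref{sec:truncated_PAW}.
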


\begin{proof}
We proceed as in the proof of Lemma \ref{lem:H1_boundedness_widetilde_psi}. Let $\alpha$ be the coercivity constant of $H$ and $f$ be an $L^2_\mathrm{per}$-normalized eigenfunction associated to the lowest eigenvalue of \eqref{eq:PAW_eig_pb}. Then we have
$$
\alpha \psh{f}{f} + \psh{f}{Hf} \geq \frac{1}{2} \| f \|_{H^1_\mathrm{per}}^2.
$$
From Equation \eqref{eq:H_N}, it easy to see that we have
$$
\psh{f}{H^N f} = \psh{f}{Hf} + \sumlim{I \in \lbrace 0, a\rbrace}{} \psh{\psh{\widetilde{p}^I}{f}^T (\Phi_I + \widetilde{\Phi}_I)}{H \psh{\widetilde{p}^I}{f}^T (\Phi_I - \widetilde{\Phi}_I)} .
$$
Hence, we have
\begin{align*}
\alpha \psh{f}{f} + \psh{f}{H^N f} & -  \sumlim{I \in \lbrace 0, a\rbrace}{} \psh{\psh{\widetilde{p}^I}{f}^T (\Phi_I + \widetilde{\Phi}_I)}{H \psh{\widetilde{p}^I}{f}^T (\Phi_I - \widetilde{\Phi}_I)} \geq \frac{1}{2} \| f \|_{H^1_\mathrm{per}}^2 \\
\alpha \psh{f}{f} + \psh{f}{H^N f} & \geq \frac{1}{2} \| f \|_{H^1_\mathrm{per}}^2 - C \sumlim{I \in \lbrace 0, a\rbrace}{}  \| \psh{\widetilde{p}^I}{f}^T (\Phi_I + \widetilde{\Phi}_I) \|_{H^1,\eta,I} \| \psh{\widetilde{p}^I}{f}^T (\Phi_I - \widetilde{\Phi}_I) \|_{H^1,\eta,I}.
\end{align*}
From items 1, 3 and 4 of Lemma \ref{lem:widetilde_p_stuff}, it is easy to show that 
\begin{equation}
\label{eq:terme_en_plus}
\left\| \psh{\widetilde{p}^I}{f}^T (\Phi_I + \widetilde{\Phi}_I) \right\|_{H^1,\eta,I} \left\| \psh{\widetilde{p}^I}{f}^T (\Phi_I - \widetilde{\Phi}_I) \right\|_{H^1,\eta,I} \leq C \eta \|f\|_{H^1_\mathrm{per}}^2.
\end{equation}
Thus, for $\eta$ sufficiently small, we have for a positive constant $C$ independent of $\eta$,
\begin{equation}
\label{eq:usefule_for_E_PAW_bddness}
\alpha \psh{f}{f} + \psh{f}{H^N f}  \geq C \| f \|_{H^1_\mathrm{per}}^2 .
\end{equation}
Since $f$ is a generalized eigenfunction of $H^N$, we have
$$
\alpha \psh{f}{f} + E^{(\eta)} \psh{f}{S^N f}  \geq C \| f \|_{H^1_\mathrm{per}}^2 .
$$
By item 5 of Lemma \ref{lem:widetilde_S_L2_bound}, we have
$$
 (E^{(\eta)} +\alpha) \psh{f}{f}  \geq C \| f \|_{H^1_\mathrm{per}}^2 ,
$$
which completes the proof. 
\end{proof}

\begin{proof}[Proof of the lower bound of Theorem \ref{thm:E_PAW_bound}]
Let $f$ be an $L^2_\mathrm{per}$-normalized eigenfunction associated to the lowest eigenvalue of $H^N f = E^{(\eta)} S^N f$. Then we have :
\begin{align*}
\psh{f}{H^N f} & = \psh{f}{H f} + \sumlim{I=\lbrace 0,a \rbrace}{}  \psh{\psh{\widetilde{p}^I}{f}^T (\Phi_I + \widetilde{\Phi}_I)}{H \psh{\widetilde{p}^I}{f}^T (\Phi_I - \widetilde{\Phi}_I)} \\
& \geq E_0 \psh{f}{f} - C \sumlim{I=\lbrace 0,a \rbrace}{}  \| \psh{\widetilde{p}^I}{f}^T (\Phi_I + \widetilde{\Phi}_I) \|_{H^1,\eta,I} \| \psh{\widetilde{p}^I}{f}^T (\Phi_I - \widetilde{\Phi}_I) \|_{H^1,\eta,I} \\
& \geq E_0 \psh{f}{f} - C \eta \|f\|_{H^1_\mathrm{per}},
\end{align*}
where we used \eqref{eq:terme_en_plus} in the last inequality.

It remains to show that $|\psh{f}{S^N f} - \psh{f}{f}| \leq C \eta \|f\|_{H^1_\mathrm{per}}^2$ which is precisely item 5 of Lemma \ref{lem:widetilde_S_L2_bound}. We then conclude the proof by Lemma \ref{lem:H1_boundedness_f}.  
\end{proof}

\subsection{PAW method with pseudopotentials}

In this section, we focus on the truncated equations \eqref{eq:PAW_eig_pb_pseudo} where a pseudopotential is used. First, we see how $H^{PAW}$ and $\widetilde{H}$ are related. 

\begin{lem}
\label{prop:H_PAW_link}
If $\epsilon \leq \eta$, then
\begin{equation}
H^{PAW} = H^N + \delta V- \sumlim{I \in \lbrace 0, a \rbrace}{} (\widetilde{p}^I)^T \psh{\widetilde{\Phi}_I}{\delta V\widetilde{\Phi}_I^T}_{I,\eta} \psh{\widetilde{p}^I}{\bm{\cdot}}.
\end{equation}
where $\delta V= - Z_0 \chi_\epsilon - Z_a \chi_\epsilon^a + Z_0 \sumlim{k \in \Z}{} \delta_k + Z_a \sumlim{k \in \Z}{} \delta_{k+a}$. 
\end{lem}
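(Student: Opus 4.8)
The plan is to obtain the identity by a direct, exact subtraction of the two defining operators \eqref{eq:H_N} and \eqref{eq:H_PAW_pseudo}: the point is that $H^N$ and $H^{PAW}$ are built from the \emph{same} true-atomic block $\psh{\phi_i^I}{H\phi_j^I}$ and differ only by replacing the background $H$ with $H_\mathrm{ps}$ and, correspondingly, $H$ with $H_\mathrm{ps}$ in the pseudo-wavefunction block. First I would record the relation between the two backgrounds. Since $H$ and $H_\mathrm{ps}$ carry the same kinetic term, their difference is purely multiplicative,
\[
H - H_\mathrm{ps} = -Z_0 \sumlim{k \in \Z}{} \delta_k - Z_a \sumlim{k \in \Z}{} \delta_{k+a} + Z_0 \chi_\epsilon + Z_a \chi_\epsilon(\cdot - a) = -\delta V ,
\]
i.e. $H_\mathrm{ps} = H + \delta V$.

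Next I would subtract the operators. The background terms contribute $H_\mathrm{ps} - H = \delta V$. In the finite double sums the blocks $\psh{\phi_i^I}{H\phi_j^I}$ are identical in \eqref{eq:H_N} and \eqref{eq:H_PAW_pseudo} and cancel, so only the pseudo-wavefunction blocks survive; using that the kinetic part cancels inside each localized bracket and that $H - H_\mathrm{ps} = -\delta V$ on the augmentation region, these combine to
\[
\sumlim{\substack{ i,j=1 \\ I = \lbrace 0, a \rbrace }}{N} \widetilde{p}^I_i \Big( \pssob{\widetilde{\phi}^I_i}{H \widetilde{\phi}^I_j}{I,\eta} - \pssob{\widetilde{\phi}^I_i}{H_\mathrm{ps} \widetilde{\phi}^I_j}{I,\eta} \Big) \psh{\widetilde{p}^I_j}{\bm{\cdot}} = - \sumlim{\substack{ i,j=1 \\ I = \lbrace 0, a \rbrace }}{N} \widetilde{p}^I_i \, \pssob{\widetilde{\phi}^I_i}{\delta V \, \widetilde{\phi}^I_j}{I,\eta} \, \psh{\widetilde{p}^I_j}{\bm{\cdot}} .
\]
Collecting the background $\delta V$ with this remainder and rewriting the $(i,j)$-summation in matrix form, $\sum_{i,j} \widetilde{p}^I_i M_{ij} \psh{\widetilde{p}^I_j}{\bm{\cdot}} = (\widetilde{p}^I)^T M \psh{\widetilde{p}^I}{\bm{\cdot}}$ with $M = \pssob{\widetilde{\Phi}_I}{\delta V \, \widetilde{\Phi}_I^T}{I,\eta}$, yields exactly the asserted formula.

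The only step needing justification --- and the sole place where the hypothesis $\epsilon \leq \eta$ is used --- is that the localized brackets genuinely capture all of $\delta V$. Because $\epsilon \leq \eta$, the smeared bump $\chi_\epsilon(\cdot - I)$ is supported inside $(I-\eta,I+\eta)$ modulo $\Z$, and because $\eta < \min(\tfrac{a}{2},\tfrac{1-a}{2})$ the two families of augmentation intervals are disjoint; hence on each interval $(I-\eta,I+\eta)$ the distribution $\delta V$ reduces to its single on-site term $-Z_I \chi_\epsilon(\cdot - I) + Z_I \delta_I$, so that $\pssob{\widetilde{\phi}^I_i}{\delta V \, \widetilde{\phi}^I_j}{I,\eta}$ records the complete matrix element of $\delta V$ between the site-$I$ pseudo functions, with no spillover to the other atom and no part of the pseudopotential cut off. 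I expect no genuine analytic difficulty here: the identity is an \emph{exact} finite-rank rearrangement, and the main obstacle is purely the bookkeeping of keeping the shared true-function block distinct from the pseudo-function block where the substitution $H \to H_\mathrm{ps}$ produces the $\delta V$ matrix. This also makes transparent why, at $N = \infty$, the completeness relation \eqref{eq:PAW_completeness} collapses the remainder and reproduces the exact pseudopotential form \eqref{eq:1}.
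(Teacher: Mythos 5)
Your overall route is the paper's own: the proof in the paper is the same exact finite-rank subtraction, resting on $H_\mathrm{ps} = H + \delta V$, so no new idea is needed. But the step where you make the two operators comparable is not valid as you justify it, and it is precisely the only substantive point of this lemma. The blocks in \eqref{eq:H_N} are \emph{full-cell} scalar products, whereas the blocks in $H^{PAW}$ must be read as truncated to $(I-\eta,I+\eta)$: Equation \eqref{eq:H_PAW_pseudo} is the truncation of \eqref{eq:1}, whose brackets carry the subscript $_{I,\eta}$, and this is how the paper uses $H^{PAW}$ later (e.g.\ in \eqref{eq:fpaw_der2}). Under this reading the atomic blocks are \emph{not} ``identical'' and do not cancel, since $\psh{\phi_i^I}{H\phi_j^I} \neq \psh{\phi_i^I}{H\phi_j^I}_{I,\eta}$ (the kinetic and off-site potential integrals outside the interval are nonzero), and for the same reason you may not truncate $\psh{\widetilde{\phi}_i^I}{H\widetilde{\phi}_j^I}$ block by block, the pseudo functions being defined on the whole cell. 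What is true --- and is the first and essentially only line of the paper's proof --- is that the \emph{difference} of the two blocks is truncation-invariant,
\[
\psh{\phi_i^I}{H \phi_j^I} - \psh{\widetilde{\phi}_i^I}{H \widetilde{\phi}_j^I} = \psh{\phi_i^I}{H \phi_j^I}_{I,\eta} - \psh{\widetilde{\phi}_i^I}{H \widetilde{\phi}_j^I}_{I,\eta},
\]
because $\widetilde{\phi}_i^I = \phi_i^I$ outside the augmentation intervals, so the outside integrands cancel in pairs. Once this identity is inserted into \eqref{eq:H_N}, your subtraction, the linearity step $\psh{\widetilde{\phi}_i^I}{H \widetilde{\phi}_j^I}_{I,\eta} - \psh{\widetilde{\phi}_i^I}{H_\mathrm{ps} \widetilde{\phi}_j^I}_{I,\eta} = -\psh{\widetilde{\phi}_i^I}{\delta V \widetilde{\phi}_j^I}_{I,\eta}$ and the matrix rewriting are all correct and give the lemma; the fix is one line, but it is a line your write-up does not contain.

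Relatedly, the justification you attach to the hypothesis $\epsilon \leq \eta$ --- that $\psh{\widetilde{\phi}_i^I}{\delta V \widetilde{\phi}_j^I}_{I,\eta}$ ``records the complete matrix element of $\delta V$ between the site-$I$ pseudo functions, with no spillover to the other atom'' --- is false, and it is exactly what your argument would need if one instead took \eqref{eq:H_PAW_pseudo} at face value with full-cell brackets (the only reading under which your cancellation of the atomic blocks is exact). Indeed $\widetilde{\phi}_i^0 = \phi_i^0$ near $a$, where $\delta V$ restricts to the nonzero distribution $Z_a \sumlim{k\in\Z}{}\delta_{a+k} - Z_a \chi_\epsilon^a$, so per period the full-cell and truncated $\delta V$-brackets differ by the cross-site term $Z_a \left( \phi_i^0(a)\phi_j^0(a) - \int \chi_\epsilon(x-a)\,\phi_i^0(x)\phi_j^0(x)\,\mathrm{d}x \right)$, which tends to $0$ as $\epsilon \to 0$ but does not vanish for fixed $\epsilon$; under the full-cell reading the stated identity would therefore be false. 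Under the correct truncated reading the identity is pure algebra, and $\epsilon \leq \eta$ plays the more modest role you also mention: together with $\eta < \min(\tfrac{a}{2},\tfrac{1-a}{2})$ it puts the support of $\delta V$ inside the disjoint augmentation intervals, so that no part of the on-site pseudopotential is cut off by the truncation and the matrix $\psh{\widetilde{\Phi}_I}{\delta V \widetilde{\Phi}_I^T}_{I,\eta}$ is the full on-site object --- consistent, as you correctly observe, with recovering \eqref{eq:1} at $N=\infty$ via the completeness relation \eqref{eq:PAW_completeness}.
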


\begin{proof}
By definition of the pseudo wave functions $\widetilde{\phi}_i$, we have 
\begin{equation}
\psh{\phi_i^I}{H \phi_j^I} - \psh{\widetilde{\phi}_i^I}{H \widetilde{\phi}_j^I} = \psh{\phi_i^I}{H \phi_j^I}_{I,\eta} - \psh{\widetilde{\phi}_i^I}{H \widetilde{\phi}_j^I}_{I,\eta}.
\end{equation}
By definition of $\delta V$, $H_\mathrm{ps} = H + \delta V$ thus we have the result. 
\end{proof}

\begin{prop}
\label{prop:H_PAW_formquad}
Let $g \in H^1_{\mathrm{per}}(0,1)$. Then
\begin{multline}
\psh{g}{H^{PAW}g} = \psh{g}{\widetilde{H}g} - 2  \sumlim{I \in \lbrace 0, a \rbrace}{} \psh{g - \psh{\widetilde{p}^I}{g}^T \widetilde{\Phi}_I}{ \psh{\widetilde{p}^I}{g}^T \left(H \Phi_I - (H + \delta V) \widetilde{\Phi}_I\right) } \\
+  \sumlim{I \in \lbrace 0, a \rbrace}{} \psh{g - \psh{\widetilde{p}^I}{g}^T \widetilde{\Phi}_I}{\delta V\left( g - \psh{\widetilde{p}^I}{g}^T \widetilde{\Phi}_I \right)}_{I,\eta}
\end{multline}
\end{prop}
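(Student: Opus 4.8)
I would prove the identity for $\psh{g}{H^{PAW}g}$ by combining Lemma~\ref{prop:H_PAW_link}, which relates $H^{PAW}$ to $H^N$, with Proposition~\ref{prop:H_N-widetilde-H}, which relates $H^N$ to $\widetilde{H}$. The starting point is to write
$$
\psh{g}{H^{PAW}g} = \psh{g}{H^N g} + \psh{g}{\delta V g} - \sumlim{I \in \lbrace 0, a \rbrace}{} \psh{g}{(\widetilde{p}^I)^T \psh{\widetilde{\Phi}_I}{\delta V\widetilde{\Phi}_I^T}_{I,\eta} \psh{\widetilde{p}^I}{g}},
$$
using Lemma~\ref{prop:H_PAW_link}. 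Then I would substitute the expression for $\psh{g}{H^N g}$ coming from Proposition~\ref{prop:H_N-widetilde-H}, which gives $\psh{g}{H^N g} = \psh{g}{\widetilde{H}g} - 2\sumlim{I}{}\psh{\psh{\widetilde{p}^I}{g}^T(\Phi_I-\widetilde{\Phi}_I)}{H(g-\psh{\widetilde{p}^I}{g}^T\widetilde{\Phi}_I)}$.

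\textbf{Key steps.} The heart of the computation is to reorganise the three $\delta V$-terms (the bulk term $\psh{g}{\delta V g}$, the Gram-type correction, and the cross term with $H(\Phi_I-\widetilde{\Phi}_I)$ that now carries $H$ only, not $H+\delta V$) into the two target terms. The crucial structural fact I would exploit is the ``completion of the square'': writing $g = (g - \psh{\widetilde{p}^I}{g}^T\widetilde{\Phi}_I) + \psh{\widetilde{p}^I}{g}^T\widetilde{\Phi}_I$, the bulk quadratic form in $\delta V$ splits into a diagonal piece $\psh{g-\psh{\widetilde{p}^I}{g}^T\widetilde{\Phi}_I}{\delta V(g-\psh{\widetilde{p}^I}{g}^T\widetilde{\Phi}_I)}_{I,\eta}$ plus cross terms and a pure $\widetilde{\Phi}_I$ term. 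Because $\delta V$ is supported in the acting regions, all $\delta V$-integrals localise to $(I-\eta,I+\eta)$, so I may freely replace $\psh{\cdot}{\delta V\, \cdot}$ by $\psh{\cdot}{\delta V\, \cdot}_{I,\eta}$ and invoke the disjointness of the two regions $I=0,a$ to treat the sum term-by-term. The pure $\widetilde{\Phi}_I$ piece of the expansion is precisely $\psh{\psh{\widetilde{p}^I}{g}^T\widetilde{\Phi}_I}{\delta V\,\psh{\widetilde{p}^I}{g}^T\widetilde{\Phi}_I}_{I,\eta}$, which is exactly cancelled by the Gram-correction term from Lemma~\ref{prop:H_PAW_link} once that term is rewritten as $\psh{\psh{\widetilde{p}^I}{g}^T\widetilde{\Phi}_I}{\delta V\,\psh{\widetilde{p}^I}{g}^T\widetilde{\Phi}_I}_{I,\eta}$.

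\textbf{Assembling the two final terms.} After this cancellation, the remaining $\delta V$ contributions are the diagonal term $\sumlim{I}{}\psh{g-\psh{\widetilde{p}^I}{g}^T\widetilde{\Phi}_I}{\delta V(g-\psh{\widetilde{p}^I}{g}^T\widetilde{\Phi}_I)}_{I,\eta}$ — which is the third term in the target identity — together with two cross terms of the form $\psh{g-\psh{\widetilde{p}^I}{g}^T\widetilde{\Phi}_I}{\delta V\,\psh{\widetilde{p}^I}{g}^T\widetilde{\Phi}_I}_{I,\eta}$. I would merge these cross terms with the $H$-cross term inherited from Proposition~\ref{prop:H_N-widetilde-H}: the latter reads $-2\psh{g-\psh{\widetilde{p}^I}{g}^T\widetilde{\Phi}_I}{\psh{\widetilde{p}^I}{g}^T H(\Phi_I-\widetilde{\Phi}_I)}$, and adding the $\delta V$ cross terms builds the combination $H\Phi_I - (H+\delta V)\widetilde{\Phi}_I$ appearing in the statement. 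The main obstacle is bookkeeping: keeping the factor of $2$ consistent and tracking the sign of each cross term through the symmetrisation, while verifying that $\delta V\,\widetilde{\Phi}_I$ correctly combines with $H(\Phi_I-\widetilde{\Phi}_I)$ into $H\Phi_I - (H+\delta V)\widetilde{\Phi}_I$. Throughout I would use the symmetry of $H$ and of the local form $\psh{\cdot}{\delta V\,\cdot}_{I,\eta}$ to symmetrise the cross terms, and the hypothesis $\epsilon \leq \eta$ (via Lemma~\ref{prop:H_PAW_link}) to guarantee $\delta V$ is supported inside the acting regions so that the truncated and untruncated $\delta V$-forms agree.
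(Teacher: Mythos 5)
Your proposal is correct and follows essentially the same route as the paper's proof: apply Lemma~\ref{prop:H_PAW_link}, substitute Proposition~\ref{prop:H_N-widetilde-H}, then complete the square in the $\delta V$-form so that the pure $\psh{\psh{\widetilde{p}^I}{g}^T\widetilde{\Phi}_I}{\delta V\,\psh{\widetilde{p}^I}{g}^T\widetilde{\Phi}_I}_{I,\eta}$ piece cancels the Gram correction and the remaining cross terms merge with the $H(\Phi_I-\widetilde{\Phi}_I)$ term into $H\Phi_I-(H+\delta V)\widetilde{\Phi}_I$. The paper performs the identical algebra in the opposite order (first splitting $H(\Phi_I-\widetilde{\Phi}_I)$ via $H=H_{\mathrm{ps}}-\delta V$, then completing the square), so the two arguments coincide.
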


\begin{proof}
By Lemma \ref{prop:H_PAW_link}, we have
\begin{equation*}
\psh{g}{H^{PAW}g} = \psh{g}{H^N g} + \psh{g}{\delta Vg} - \sumlim{I \in \lbrace 0, a \rbrace}{}  \psh{\psh{\widetilde{p}^I}{g}^T \widetilde{\Phi}_I}{\delta V \psh{\widetilde{p}^I}{g}^T \widetilde{\Phi}_I}_{I,\eta} .
\end{equation*}
Applying Proposition \ref{prop:H_N-widetilde-H}, we obtain
\begin{multline*}
\psh{g}{H^{PAW}g} = \psh{g}{\widetilde{H} g} - 2 \sumlim{I \in \lbrace 0, a \rbrace}{} \psh{g - \psh{\widetilde{p}^I}{g}^T \widetilde{\Phi}_I}{ \psh{\widetilde{p}^I}{g}^T H ( \Phi_I - \widetilde{\Phi}_I) }\\
 + \psh{g}{\delta Vg}- \sumlim{I \in \lbrace 0, a \rbrace}{}  \psh{\psh{\widetilde{p}^I}{g}^T \widetilde{\Phi}_I}{\delta V\psh{\widetilde{p}^I}{g}^T \widetilde{\Phi}_I} .
\end{multline*}
Now, using $H_\mathrm{ps} = H + \delta V$, we get
\begin{multline}
\label{eq:H_PAW_1form}
\psh{g}{H^{PAW}g} = \psh{g}{\widetilde{H} g} - 2 \sumlim{I \in \lbrace 0, a \rbrace}{} \psh{g - \psh{\widetilde{p}^I}{g}^T \widetilde{\Phi}_I}{ \psh{\widetilde{p}^I}{g}^T (H \Phi_I - H_\mathrm{ps} \widetilde{\Phi}_I )}\\
 - 2 \sumlim{I \in \lbrace 0, a \rbrace}{} \psh{g - \psh{\widetilde{p}^I}{g}^T \widetilde{\Phi}_I}{\delta V \psh{\widetilde{p}^I}{g}^T \widetilde{\Phi}_I}_{I,\eta}
 + \psh{g}{\delta Vg}-  \sumlim{I \in \lbrace 0, a \rbrace}{}  \psh{\psh{\widetilde{p}^I}{g}^T \widetilde{\Phi}_I}{\delta V\psh{\widetilde{p}^I}{g}^T \widetilde{\Phi}_I}.
\end{multline}
Notice that for each $I$,
\begin{align*}
- 2  & \psh{g - \psh{\widetilde{p}^I}{g}^T \widetilde{\Phi}_I}{\delta V \psh{\widetilde{p}^I}{g}^T \widetilde{\Phi}_I}_{I,\eta}
 + \psh{g}{\delta Vg}_{I,\eta} -  \psh{\psh{\widetilde{p}^I}{g}^T \widetilde{\Phi}_I}{\delta V\psh{\widetilde{p}^I}{g}^T \widetilde{\Phi}_I} \\
  = & \psh{g}{\delta Vg}_{I,\eta} - 2 \psh{g }{ \delta V \psh{\widetilde{p}^I}{g}^T \widetilde{\Phi}_I) }_{I,\eta}  + \psh{\psh{\widetilde{p}^I}{g}^T \widetilde{\Phi}_I}{\delta V \psh{\widetilde{p}^I}{g}^T \widetilde{\Phi}_I }_{I,\eta} \\
 = & \psh{g - \psh{\widetilde{p}^I}{g}^T \widetilde{\Phi}_I}{\delta V\left( g - \psh{\widetilde{p}^I}{g}^T \widetilde{\Phi}_I \right)}_{I,\eta} .
\end{align*}
Injecting this expression in \eqref{eq:H_PAW_1form}, we have the result. 
\end{proof}

\subsubsection{Proof of the upper bound of Theorem \ref{thm:E_PAW_bound}}
\label{subsec:proof_upper_bound_PAW_pseudo}

\begin{proof}[Proof of the upper bound of Theorem \ref{thm:E_PAW_bound}]
We start by estimating $\psh{\widetilde{\psi}}{H^{PAW} \widetilde{\psi}}$ where $\widetilde{\psi}$ is the generalized eigenfunction associated to the lowest eigenvalue: $\widetilde{H} \widetilde{\psi} = E_0 \widetilde{S} \widetilde{\psi}$. Thus we have :
\begin{align*}
\psh{\widetilde{\psi}}{H^{PAW}\widetilde{\psi}} & = \psh{\widetilde{\psi}}{\widetilde{H}\widetilde{\psi}} - 2 \sumlim{I \in \lbrace 0, a \rbrace}{} \psh{\widetilde{\psi} - \psh{\widetilde{p}^I}{\widetilde{\psi}}^T \widetilde{\Phi}_I}{ \psh{\widetilde{p}^I}{\widetilde{\psi}}^T \left(H \Phi_I - (H + \delta V) \widetilde{\Phi}_I\right) } \\
& \qquad + \sumlim{I \in \lbrace 0, a \rbrace}{} \psh{\widetilde{\psi} - \psh{\widetilde{p}^I}{\widetilde{\psi}}^T \widetilde{\Phi}_I}{\delta V\left( \widetilde{\psi} - \psh{\widetilde{p}^I}{\widetilde{\psi}}^T \widetilde{\Phi}_I \right)}_{I,\eta}.
\end{align*}
By Equation \eqref{eq:widetilde_p-Ap-psi}, we have for each $I$
$$
\widetilde{\psi} - \psh{\widetilde{p}^I}{\widetilde{\psi}}^T \widetilde{\Phi}_I = \psi - \psh{A_I^{-1}p^I}{\psi}^T \Phi_I,
$$
so for each $I$
\begin{align*}
\psh{\widetilde{\psi} - \psh{\widetilde{p}^I}{\widetilde{\psi}} \right. & \left. \cdot \widetilde{\Phi}_I}{ \psh{\widetilde{p}^I}{\widetilde{\psi}}^T \left(H \Phi_I - (H + \delta V) \widetilde{\Phi}_I\right) } \\
& = \psh{\psi - \psh{A_I^{-1}p^I}{\psi}^T \Phi_I}{ \psh{\widetilde{p}^I}{\widetilde{\psi}}^T \left(H \Phi_I - (H + \delta V) \widetilde{\Phi}_I\right) } \\
& = \psh{E_0 \psi - \psh{A_I^{-1}p^I}{\psi}^T \cE^I \Phi_I}{ \psh{\widetilde{p}^I}{\widetilde{\psi}}^T (\Phi_I - \widetilde{\Phi}_I) } \\
& \qquad + \left\langle \psi - \psh{A_I^{-1}p^I}{\psi}^T \Phi_I, \psh{\widetilde{p}^I}{\widetilde{\psi}}^T \delta V \widetilde{\Phi}_I \right\rangle_{I,\eta}.
\end{align*}
We have already proved in the proof of the upper bound of Theorem \ref{thm:bound_trunc_PAW} that 
$$
\left| \psh{E_0 \psi - \psh{A_I^{-1}p^I}{\psi}^T \cE^I \Phi_I}{ \psh{\widetilde{p}^I}{\widetilde{\psi}}^T (\Phi_I - \widetilde{\Phi}_I) } \right| \leq C \eta^{2N}.
$$
Moreover by Lemma \ref{lem:approximation} and item 3 of Lemma \ref{lem:widetilde_p_stuff}, we have
\begin{align*}
\left| \psh{\psi - \psh{A_I^{-1}p^I}{\psi}^T \Phi_I}{ \psh{\widetilde{p}^I}{\widetilde{\psi}}^T \delta V \widetilde{\Phi}_I }_{I,\eta}\right| & \leq C  \| \psi_e - \psh{A_I^{-1}p^I}{\psi} \cdot \Phi_I \|_{\infty,\eta,I} \| \psh{\widetilde{p}^I}{\widetilde{\psi}}^T \widetilde{\Phi}_I \|_{\infty,\eta,I} \\
& \leq C \eta^{2N}.
\end{align*}
Again using Lemma \ref{lem:approximation}, we obtain 
\begin{equation}
\left| \psh{\widetilde{\psi} - \psh{\widetilde{p}^I}{\widetilde{\psi}}^T \widetilde{\Phi}_I}{\delta V\left( \widetilde{\psi} - \psh{\widetilde{p}^I}{\widetilde{\psi}}^T \widetilde{\Phi}_I \right)}_{I,\eta} \right| \leq C \eta^{2N}  + \itg{-\eta}{\eta}{\chi_\epsilon(x) |\psi_o(x)|^2}{x},
\end{equation}
where $\psi_o$ is the odd part of $\psi$. By Lemma 4.2 in \cite{blanc2017vpaw1d}, we know that for $|x|\leq \eta$, there exists a constant independent of $\eta$ such that:
$$
|\psi_o(x)|^2 \leq C \eta^2,
$$
hence 
\begin{equation*}
\left| \psh{\widetilde{\psi} - \psh{\widetilde{p}^I}{\widetilde{\psi}}^T \widetilde{\Phi}_I}{\delta V\left( \widetilde{\psi} - \psh{\widetilde{p}^I}{\widetilde{\psi}}^T \widetilde{\Phi}_I \right)}_{I,\eta} \right| \leq C \eta^2.
\end{equation*}
Thus
$$
E^{PAW} \psh{\widetilde{\psi}}{S^{PAW}\widetilde{\psi}} \leq E_0 \psh{\widetilde{\psi}}{\widetilde{S} \widetilde{\psi}} + C \eta^2,
$$
and we conclude using item 4 of Lemma \ref{lem:widetilde_S_L2_bound} (recall that $S^{PAW} = S^N$).
\end{proof}

\subsubsection{Proof of the lower bound of Theorem \ref{thm:E_PAW_bound}}

The core of the proof of the error on the lowest PAW eigenvalue lies on the estimation of \linebreak $f - \sumlim{i=1}{N} \psh{\widetilde{p}_i}{f} \widetilde{\phi}_i$, which is of the order of the best approximation of $f$ by the family of pseudo wave functions $(\widetilde{\phi}_i)_{1 \leq i \leq N}$. In order to give estimates of the best approximation, we analyze the behavior of the PAW eigenfunction $f$, but first, we need an estimate on the PAW eigenvalue.

\begin{lem}
\label{lem:E_PAW_bounded_below}
Let $E^{PAW}$ be the lowest generalized eigenvalue of \eqref{eq:PAW_eig_pb_pseudo}. Then as $\eta$ goes to 0, $E^{PAW}$ is bounded by below. 
\end{lem}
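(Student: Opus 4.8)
The plan is to establish a coercivity estimate for $H^{PAW}$ that is uniform in $\eta$ and then to combine it with the upper bound $E^{PAW} \le E_0 + C\eta^2$ already proved in Section~\ref{subsec:proof_upper_bound_PAW_pseudo}. Precisely, I would show that there are constants $\alpha, c > 0$ independent of $\eta$ such that, for $\eta$ small enough,
\begin{equation*}
\psh{g}{H^{PAW}g} + \alpha \|g\|_{L^2_\mathrm{per}}^2 \ge c\,\|g\|_{H^1_\mathrm{per}}^2, \qquad g \in H^1_\mathrm{per}(0,1).
\end{equation*}
Granting this, let $f$ be an $L^2_\mathrm{per}$-normalized eigenfunction associated with $E^{PAW}$. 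Since $S^{PAW} = S^N$ is positive definite for $\eta$ small (a perturbation of $\widetilde{S}$ controlled by item~3 of Lemma~\ref{lem:widetilde_S_L2_bound}), we have $\psh{f}{S^N f} > 0$; together with $E^{PAW} < 0$ (which holds for $\eta$ small since $E_0 < 0$) this gives $\psh{f}{H^{PAW}f} = E^{PAW}\psh{f}{S^N f} < 0$. The coercivity estimate then forces $\|f\|_{H^1_\mathrm{per}}^2 \le \alpha/c$, whence by item~5 of Lemma~\ref{lem:widetilde_S_L2_bound} we get $\psh{f}{S^N f} \ge 1 - C\eta\|f\|_{H^1_\mathrm{per}}^2 \ge \tfrac12$ for $\eta$ small. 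Therefore $E^{PAW} = \psh{f}{H^{PAW}f}/\psh{f}{S^N f} \ge -2\alpha$, which is the desired lower bound.

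To prove the coercivity estimate I would start from Proposition~\ref{prop:H_PAW_formquad}, which expresses $\psh{g}{H^{PAW}g}$ as $\psh{g}{\widetilde{H}g}$ plus two families of correction terms. For the leading term, writing $\widetilde{H} = (\mathrm{Id}+T_N^*)H(\mathrm{Id}+T_N)$ and using the coercivity of $H$ recalled in the proof of Lemma~\ref{lem:H1_boundedness_widetilde_psi}, one has $\psh{g}{\widetilde{H}g} \ge \tfrac12\|(\mathrm{Id}+T_N)g\|_{H^1_\mathrm{per}}^2 - \alpha\,\psh{g}{\widetilde{S}g}$; since $\|T_N g\|_{H^1_\mathrm{per}} \le C\eta^{1/2}\|g\|_{H^1_\mathrm{per}}$ (item~1 of Lemma~\ref{lem:widetilde_p_stuff}) and $\psh{g}{\widetilde{S}g} \le C\|g\|_{L^2_\mathrm{per}}^2$ (item~1 of Lemma~\ref{lem:widetilde_S_L2_bound}), this yields $\psh{g}{\widetilde{H}g} \ge \tfrac18\|g\|_{H^1_\mathrm{per}}^2 - C\|g\|_{L^2_\mathrm{per}}^2$ for $\eta$ small. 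It then suffices to bound each correction term by $C\eta^{1/2}\|g\|_{H^1_\mathrm{per}}^2$, so that they are absorbed into $\tfrac18\|g\|_{H^1_\mathrm{per}}^2$. The pieces not involving the pseudopotential are treated as in \eqref{eq:terme_en_plus} and the proof of Lemma~\ref{lem:H1_boundedness_f}, using items~1, 3 and 4 of Lemma~\ref{lem:widetilde_p_stuff} and the weak form of $H$ (which produces only a boundary contribution at the nucleus, since $\Phi_I - \widetilde{\Phi}_I$ is supported in the ball around the site $I$).

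The delicate part — and the main obstacle — is the contribution of $\delta V$, because the pseudopotential satisfies only $\|\chi_\epsilon\|_{L^\infty} \sim \eta^{-1}$. Setting $h = g - \psh{\widetilde{p}^I}{g}^T\widetilde{\Phi}_I$, the quadratic piece reads, for $I = 0$,
\begin{equation*}
\psh{h}{\delta V h}_{0,\eta} = Z_0\,|h(0)|^2 - Z_0\itg{-\eta}{\eta}{\chi_\epsilon(x)\,|h(x)|^2}{x},
\end{equation*}
and estimating the two summands separately only gives $O(\|g\|_{H^1_\mathrm{per}}^2)$, which is far too crude to be absorbed. The key observation is that, because $\epsilon = \eta$ and $\int \chi_\epsilon = 1$ over one period, the Dirac mass at the nucleus and the pseudopotential nearly cancel: writing $\itg{-\eta}{\eta}{\chi_\epsilon\,|h|^2}{x} = |h(0)|^2 + \itg{-\eta}{\eta}{\chi_\epsilon\,(|h|^2 - |h(0)|^2)}{x}$ and bounding $\big|\,|h(x)|^2 - |h(0)|^2\,\big| \le C\eta^{1/2}\|h\|_{L^\infty}\|h'\|_{2,\eta,0}$ by the fundamental theorem of calculus and Cauchy--Schwarz, the leading terms cancel and one is left with $|\psh{h}{\delta V h}_{0,\eta}| \le C\eta^{1/2}\|h\|_{L^\infty}\|h'\|_{2,\eta,0} \le C\eta^{1/2}\|g\|_{H^1_\mathrm{per}}^2$, where items~3 and 4 of Lemma~\ref{lem:widetilde_p_stuff} control $\|h\|_{L^\infty}$ and $\|h'\|_{2,\eta,0}$. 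The same cancellation disposes of the $\delta V$ piece of the first family of correction terms, applied there to the product $h\,\psh{\widetilde{p}^I}{g}^T\widetilde{\Phi}_I$ in place of $|h|^2$. This completes the coercivity estimate and hence the proof.
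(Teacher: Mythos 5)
Your proof is correct, and its engine is the same as the paper's: a coercivity estimate for $H^{PAW}$ uniform in $\eta$, obtained by exploiting the near-cancellation of the Dirac mass against the unit-mass pseudopotential (writing $\itg{-\epsilon}{\epsilon}{\chi_\epsilon(x)}{x}=1$ and gaining $\eta^{1/2}$ from $\|h-h(0)\|_{\infty,\eta}\leq C\eta^{1/2}\|h\|_{H^1_\mathrm{per}}$), which is precisely the computation \eqref{eq:borne_f_dV_f}. The bookkeeping, however, is genuinely different. The paper stays at the operator level: by Lemma \ref{prop:H_PAW_link}, $H^{PAW}=H^N+\delta V-\sum_{I}(\widetilde p^I)^T\psh{\widetilde\Phi_I}{\delta V\widetilde\Phi_I^T}_{I,\eta}\psh{\widetilde p^I}{\bm{\cdot}}$, so it recycles the coercivity bound \eqref{eq:usefule_for_E_PAW_bddness} already proved for the no-pseudopotential operator $H^N$ in Lemma \ref{lem:H1_boundedness_f}, and only two $\delta V$ terms remain to be controlled, $\psh{f}{\delta Vf}$ and $\psh{\psh{\widetilde p^I}{f}^T\widetilde\Phi_I}{\delta V\psh{\widetilde p^I}{f}^T\widetilde\Phi_I}$ (estimates \eqref{eq:borne_f_dV_f} and \eqref{eq:borne_tdpPhi_dV_tdpPhi}). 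You instead expand the quadratic form through Proposition \ref{prop:H_PAW_formquad}, re-derive the coercivity of $\widetilde H$ as in Lemma \ref{lem:H1_boundedness_widetilde_psi}, and absorb three correction families, applying the cancellation to $h=g-\psh{\widetilde p^I}{g}^T\widetilde\Phi_I$ and to the cross term; your $\cO(\eta^{1/2}\|g\|_{H^1_\mathrm{per}}^2)$ bounds are all justified by items 1, 3 and 4 of Lemma \ref{lem:widetilde_p_stuff}, and the two decompositions are algebraically equivalent. What the paper's route buys is economy (the non-$\delta V$ corrections are already packaged in \eqref{eq:usefule_for_E_PAW_bddness}); what yours buys is a self-contained, explicitly stated coercivity inequality for $H^{PAW}$, which is in effect \eqref{eq:f_PAW_bounded} before testing against the eigenfunction, and hence directly reusable (it also gives Lemma \ref{lem:H1_boundedness_fPAW} for free).

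Two remarks on the final step. Your claim that $S^N$ is positive definite uniformly in $\eta$ does not follow from item 3 of Lemma \ref{lem:widetilde_S_L2_bound} as cited: that item measures the perturbation of $\widetilde S$ in the $H^1_\mathrm{per}$-norm, while no uniform-in-$\eta$ lower bound for $\widetilde S$ on $L^2_\mathrm{per}$ is established anywhere. Only positivity of $\psh{f}{S^Nf}$ at the eigenfunction is actually needed, and it is cleaner to obtain it from item 5 of Lemma \ref{lem:widetilde_S_L2_bound} \emph{after} the $H^1_\mathrm{per}$ bound on $f$, so your closing chain needs a slight reordering or a short case analysis on the sign of $\psh{f}{S^Nf}$ --- a loose end that the paper's own one-line conclusion (``we conclude the proof using item 5'') shares, so this is a repair, not a fatal gap. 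Finally, your appeal to the upper bound $E^{PAW}\leq E_0+C\eta^2$ of Section \ref{subsec:proof_upper_bound_PAW_pseudo} to get $E^{PAW}<0$ is legitimate and non-circular, since that bound is proved independently of the present lemma; the paper's proof avoids invoking it, but nothing is lost by using it.
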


\begin{proof}
Let $f$ be an $L^2_\mathrm{per}$-normalized generalized eigenfunction of \eqref{eq:PAW_eig_pb_pseudo} associated to $E^{PAW}$. By \eqref{eq:usefule_for_E_PAW_bddness}, we have
$$
\alpha \psh{f}{f} + \psh{f}{H^Nf} \geq C \|f\|_{H^1_\mathrm{per}}^2,
$$
where $C$ is some positive constant, $\alpha$ the coercivity constant of $H$ \eqref{eq:H_mol} and $H^N$ the truncated PAW operator \eqref{eq:H_N}. By Lemma \ref{prop:H_PAW_link}, we have 
\begin{equation}
\label{eq:E_PAW_intermediate}
\alpha \psh{f}{f} + \psh{f}{H^{PAW}f} \geq C \|f\|_{H^1_\mathrm{per}} - \psh{f}{\delta V f} + \sumlim{I \in \lbrace 0, a \rbrace}{} \psh{\psh{\widetilde{p}^I}{f}^T \widetilde{\Phi}_I}{\delta V \psh{\widetilde{p}^I}{f}^T \widetilde{\Phi}_I}.
\end{equation}
We have 
\begin{align}
\left| \psh{f}{\delta V f}_{0,\eta} \right| & \leq Z_0 \left| \itg{-\epsilon}{\epsilon}{\chi_\epsilon(x) (|f(x)|^2 - |f(0)|^2)}{x} \right| \nonumber \\
& \leq C \itg{-\epsilon}{\epsilon}{\chi_\epsilon(x) |f(x)+f(0)||f(x)-f(0)|}{x} \nonumber \\
& \leq C \|f\|_{\infty,\eta} \|f - f(0)\|_{\infty,\eta} \nonumber \\
& \leq C \eta^{1/2} \|f\|^2_{H^1_\mathrm{per}} \label{eq:borne_f_dV_f} ,
\end{align}
where in the second inequality, we used $\itg{-\epsilon}{\epsilon}{\chi_\epsilon(x)}{x} = 1$ and $\epsilon \leq \eta$ and in the last inequality, $\|f - f(0)\|_{\infty,\eta} \leq C \eta^{1/2} \|f\|_{H^1_\mathrm{per}}$ and the Sobolev embedding $\|f\|_{L^\infty} \leq C \|f\|_{H^1_\mathrm{per}}$. 

Similarly, we have
\begin{align*}
\left| \psh{\psh{\widetilde{p}^I}{f}^T \widetilde{\Phi}_I}{\delta V \psh{\widetilde{p}^I}{f}^T \widetilde{\Phi}_I} \right| & \leq C \eta^{1/2} \|\psh{\widetilde{p}^I}{f}^T \widetilde{\Phi}_I\|_{H^1_\mathrm{per}}^2,
\end{align*}
thus by items 3 and 4 of Lemma \ref{lem:widetilde_p_stuff}, we obtain
\begin{equation}
\label{eq:borne_tdpPhi_dV_tdpPhi}
\left| \psh{\psh{\widetilde{p}^I}{f}^T \widetilde{\Phi}_I}{\delta V \psh{\widetilde{p}^I}{f}^T \widetilde{\Phi}_I} \right|  \leq C \eta \|f\|_{H^1_\mathrm{per}}^2.
\end{equation}

Thus injecting \eqref{eq:borne_f_dV_f} and \eqref{eq:borne_tdpPhi_dV_tdpPhi} in \eqref{eq:E_PAW_intermediate}, we get for $\eta$ sufficiently small and a positive constant $C$,
$$
\alpha \psh{f}{f} + \psh{f}{H^{PAW}f} \geq C \|f\|_{H^1_\mathrm{per}}^2.
$$
Thus
\begin{equation}
\label{eq:f_PAW_bounded}
\alpha \psh{f}{f} + E^{PAW} \psh{f}{S^{PAW}f} \geq C \|f\|_{H^1_\mathrm{per}}^2,
\end{equation}
and we conclude the proof using item 5 of Lemma \ref{lem:widetilde_S_L2_bound}.
\end{proof}

\begin{lem}
\label{lem:der_f_paw}
Let $f$ be a generalized eigenfunction of \eqref{eq:PAW_eig_pb_pseudo} and $k \in \N^*$. Then there exists a constant $C$ independent of $\eta$, $\epsilon$ and $f$ such that
\begin{equation}
\|f^{(k)} \|_{\infty,\eta,I} \leq C \left( \frac{1}{\eta^{k-1}} + \frac{1}{\epsilon^{k-1}} \right) \|f\|_{\infty,\eta,I}
\end{equation}
\end{lem}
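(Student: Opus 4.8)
The plan is to convert the generalized eigenvalue equation \eqref{eq:PAW_eig_pb_pseudo} into a pointwise second-order ODE for $f$ on the augmentation region and then to differentiate it $k-2$ times, controlling the result by induction on $k$. Fix $I=0$; the case $I=a$ is identical after a translation. On $(-\eta,\eta)$ the site-$a$ operators vanish, so expanding $H^{PAW}f=E^{PAW}S^{PAW}f$ and isolating the second derivative gives
\[
-f'' = Z_0\chi_\epsilon\,f + E^{PAW}f + \sumlim{i=1}{N}\widetilde{p}_i^0(x)\,\mu_i ,
\]
where each $\mu_i=\sumlim{j=1}{N}\big(E^{PAW}(\psh{\phi_i^0}{\phi_j^0}-\psh{\widetilde{\phi}_i^0}{\widetilde{\phi}_j^0})-\psh{\phi_i^0}{H\phi_j^0}+\psh{\widetilde{\phi}_i^0}{H_{\mathrm{ps}}\widetilde{\phi}_j^0}\big)\psh{\widetilde{p}_j^0}{f}$ is a scalar gathering the projector corrections of $H^{PAW}$ and $S^{PAW}$. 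Two elementary facts enter at once: $\|\chi_\epsilon^{(\ell)}\|_{\infty}\le C\epsilon^{-\ell-1}$ from the definition of $\chi_\epsilon$, and $|E^{PAW}|\le C$ from Lemma \ref{lem:E_PAW_bounded_below} together with the upper bound of Theorem \ref{thm:E_PAW_bound}.

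The heart of the matter is the scaling of the projector term and its derivatives. Crudely one has $\widetilde{p}_i^0\sim\eta^{-1}$ and, since the pseudo wave functions oscillate on scale $\eta$, $\psh{\widetilde{\phi}_i^0}{H_{\mathrm{ps}}\widetilde{\phi}_j^0}_{0,\eta}\sim\eta^{-1}$, which would make the projector term $O(\eta^{-2}\|f\|)$ and destroy the estimate. The gain comes from the $C^{d-1}$-matching of $\widetilde{\phi}_i^0$ to $\phi_i^0$ at $\eta$: it forces the high-degree coefficients of $\widetilde{\phi}_i^0$ in the scaled basis $P(\cdot/\eta)$ to be $O(\eta)$, precisely the content of item 3 of Lemma \ref{lem:matrice_moche}. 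Using the representation of item 2 of that lemma together with this cancellation, I would show $\big\|\frac{d^\ell}{dx^\ell}\big(\sumlim{i=1}{N}\widetilde{p}_i^0\mu_i\big)\big\|_{\infty,\eta,0}\le C\eta^{-\ell-1}\|f\|_{\infty,\eta,0}$ — one power of $\eta$ better than the naive bound. Granting this, the induction is routine for $k\ge2$: writing $f^{(k)}=(f'')^{(k-2)}$ and applying Leibniz to the displayed ODE, the projector term contributes $O(\eta^{-(k-1)}\|f\|)$, the terms $\chi_\epsilon^{(\ell)}f^{(k-2-\ell)}$ are handled by the induction hypothesis with the extreme term $\chi_\epsilon^{(k-2)}f$ producing the dominant $O(\epsilon^{-(k-1)}\|f\|)$, and $E^{PAW}f^{(k-2)}$ is lower order. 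The hypothesis $\epsilon\le\eta$ is decisive throughout: it allows every surviving $\eta^{-1}$ to be absorbed into $\epsilon^{-1}$, so that all mixed terms collapse into $C\epsilon^{-(k-1)}\|f\|$, and summing gives $\|f^{(k)}\|_{\infty,\eta,0}\le C(\eta^{-(k-1)}+\epsilon^{-(k-1)})\|f\|_{\infty,\eta,0}$.

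I expect two points to be delicate. The first, and most essential, is making the one-power-of-$\eta$ gain in the projector term rigorous — that the matching cancellation of Lemma \ref{lem:matrice_moche} genuinely survives after multiplication by $\widetilde{p}_i^0$ and by the Hamiltonian matrix elements hidden in $\mu_i$; this is the only step that uses real structure of the PAW construction rather than crude sizes. The second is the base case $k=1$. It cannot be read off the interior ODE, because integrating $f''$ only controls the oscillation of $f'$: one has $|f'(x)-f'(-\eta)|=\big|\int_{-\eta}^x f''\big|\le C\|f\|_{\infty,\eta,0}$, using $\int_{-\eta}^\eta\chi_\epsilon=1$ and the fact that the projector term, bounded pointwise by $C\eta^{-1}\|f\|$, integrates to $O(\|f\|)$ over the length-$\eta$ interval; but reconstructing the single boundary slope $f'(\pm\eta)$ from the Dirichlet data $f(\pm\eta)$ alone would cost a spurious $\eta^{-1}$. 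To close this I would step into the adjacent exterior region, where the projector and pseudopotential terms vanish and $f$ solves the constant-coefficient equation $-f''=E^{PAW}f$ on an interval of fixed length with $E^{PAW}$ bounded; there a standard interpolation estimate bounds $|f'(\pm\eta)|$ by $C\|f\|_{L^\infty}$, which for the lowest eigenfunction is $\le C\|f\|_{\infty,\eta,0}$, and combining with the oscillation bound yields the $k=1$ estimate.
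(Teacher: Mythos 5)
Your proposal follows essentially the same route as the paper's proof: the same interior pointwise ODE on the augmentation region (the paper's equation \eqref{eq:fpaw_der2}), the same refined $\cO(\eta^{-\ell-1})$ bound on the $\ell$-th derivative of the projector terms obtained from items 2 and 3 of Lemma \ref{lem:matrice_moche} (this is exactly how the paper gains the power of $\eta$ over the crude count), the same induction by differentiating the ODE with $\epsilon \leq \eta$ absorbing the mixed $\chi_\epsilon^{(\ell)} f^{(k-2-\ell)}$ terms, and the same resolution of the $k=1$ case through the exterior region where $-f'' = E^{PAW} f$ with $E^{PAW}$ bounded via Lemma \ref{lem:E_PAW_bounded_below} together with the previously established upper bound $E^{PAW} \leq E_0 + C\eta^2$. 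The only cosmetic difference is that the paper bounds $f'(\pm\eta)$ by writing the exterior solution explicitly as $a_1 \cosh\left(\sqrt{-E^{PAW}}\,x\right) + a_2 \sinh\left(\sqrt{-E^{PAW}}\,x\right)$ (using $E^{PAW} < 0$ for small $\eta$), where you invoke a standard interpolation estimate on a fixed-length interval; the two are equivalent.
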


\begin{proof}
This lemma is proved by iteration. We show the lemma for $I=0$ and drop the index $I$. 
\paragraph{Initialization}
To get the desired estimate for $f'$, we integrate \eqref{eq:PAW_eig_pb_pseudo} on $(-\eta,x)$ where $x \in (-\eta,\eta)$:
\begin{multline}
\label{eq:fpaw_der2}
- f''(x) + \frac{1}{\epsilon} \chi\left( \tfrac{x}{\epsilon} \right)f(x) + \psh{\widetilde{p}}{f}^T \left( \psh{\Phi}{H \Phi^T}_\eta - \psh{\widetilde{\Phi}}{H_\mathrm{ps} \widetilde{\Phi}^T}_\eta \right) \widetilde{p}^I(x) \\
= E^{PAW} \left(f(x) + \psh{\widetilde{p}}{f}^T \left( \psh{\Phi}{ \Phi^T}_\eta - \psh{\widetilde{\Phi}}{\widetilde{\Phi}^T}_\eta \right) \widetilde{p}(x) \right).
\end{multline}
First, we bound $f'(\pm \eta)$ and $f'(a \pm \eta)$. For $x \in \bigcup\limits_{k \in \Z}(\eta + k, a-\eta+k)$ and $x \in \bigcup\limits_{k \in \Z}(a+\eta + k, 1-\eta+k)$, $f$ satisfies
$$
-f''(x) = E^{PAW} f(x).
$$
From Section \ref{subsec:proof_upper_bound_PAW_pseudo}, we already know that 
$$
E^{PAW} \leq E_0 + C \eta^2.
$$
Since $E_0 < 0$, then for $\eta$ sufficiently small, $E^{PAW} < 0$. Thus, outside the intervals $(-\eta,\eta)$ and $(a-\eta,a+\eta)$, $f$ can be written as
$$
f(x) = a_1 \cosh\left(\sqrt{-E^{PAW}}x\right) + a_2 \sinh\left(\sqrt{-E^{PAW}}x\right).
$$
The coefficients $a_1$ and $a_2$ are determined by the continuity of $f$ at $\pm \eta$ and $a \pm \eta$. By Lemma~\ref{lem:E_PAW_bounded_below}, $E^{PAW}$ is bounded from below as $\eta$ goes to 0, hence $|f'(\pm \eta)|$ and $f'(a\pm \eta)$ are uniformly bounded with respect to $\eta$ as $\eta$ goes to 0. 
\newline

We now prove that $f'(x)$ is uniformly bounded with respect to $\eta$ and $\epsilon$ as $\eta, \epsilon \rightarrow 0$ for $x \in \bigcup\limits_{k \in \Z} (-\eta+k, \eta+k)$ and $x \in \bigcup\limits_{k \in \Z} (a-\eta+k, a+\eta+k)$.  $\chi \left( \tfrac{\cdot}{\epsilon} \right)$ is a bounded function supported in $(-\epsilon,\epsilon)$, we have
\begin{align*}
\left| \frac{1}{\epsilon} \itg{-\eta}{x}{\chi \left( \tfrac{t}{\epsilon} \right) f(t)}{t} \right| \leq C \|f\|_{\infty,\eta}.
\end{align*}

To finish the proof, it suffices to show that the remaining terms are at most of order $\cO\left(\tfrac{\|f\|_{\infty,\eta}}{\eta}\right)$ with respect to the $\infty$-norm.  These terms will be treated separately.
\begin{enumerate}
\item For $\psh{\widetilde{p}}{f}^T \psh{\Phi}{\Phi^T}_\eta \, \widetilde{p}(x)$, by item 2 of Lemma \ref{lem:matrice_moche}, we have
\begin{multline*}
\psh{\widetilde{p}}{f}^T \psh{\Phi}{\Phi^T}_\eta \, \widetilde{p}(x)  = 
\left( M_\eta \itg{-1}{1}{\rho(t) f(\eta t) P(t)}{t} \right)^T \\
\psh{\left( \begin{array}{c} C_1^{-1} \\ \hline 0 \end{array} \right) \Phi}{ \Phi^T \Big( C_1^{-T} \, \Big| \, 0 \Big) }_\eta  M_\eta \rho\left(\tfrac{x}{\eta}\right) P \left(\tfrac{x}{\eta}\right).
\end{multline*}
According to item 3 of Lemma \ref{lem:matrice_moche}, we already know that 
$$
\left\| \left( \begin{array}{c} C_1^{-1} \\ \hline 0 \end{array} \right) \Phi \right\|_{\infty,\eta} \leq C,
$$
thus
\begin{equation*}
\left|\psh{\widetilde{p}}{f}^T \psh{\Phi}{\Phi^T}_\eta \, \widetilde{p}(x) \right| \leq C \|f\|_{\infty,\eta} \left|\rho\left(\tfrac{x}{\eta}\right) P \left(\tfrac{x}{\eta}\right) \right|.
\end{equation*}

\item Using item 2 of Lemma \ref{lem:matrice_moche}, the term $\psh{\widetilde{p}}{f}^T \psh{\widetilde{\Phi}}{\widetilde{\Phi}^T}_\eta \, \widetilde{p}(x)$ can be written as
\begin{align*}
\psh{\widetilde{p}}{f}^T \psh{\widetilde{\Phi}}{\widetilde{\Phi}^T}_\eta \, \widetilde{p}(x) & = 
\left( M_\eta \itg{-1}{1}{\rho(t) f(\eta t) P(t)}{t}  \right)^T \psh{P\left( \tfrac{\cdot}{\eta} \right) }{P^T\left( \tfrac{\cdot}{\eta} \right) }_\eta \, M_\eta \rho\left(\tfrac{x}{\eta}\right) P \left(\tfrac{x}{\eta}\right).
\end{align*}
Hence, we obtain 
\begin{equation*}
\left| \psh{\widetilde{p}}{f}^T \psh{\widetilde{\Phi}}{\widetilde{\Phi}^T}_\eta \, \widetilde{p}^I(x) \right| \leq C \|f\|_{\infty,\eta} \left|\rho\left(\tfrac{x}{\eta}\right) P \left(\tfrac{x}{\eta}\right) \right|.
\end{equation*}

\item On the LHS of \eqref{eq:fpaw_der2}, the term $\psh{\widetilde{p}}{f}^T \psh{\Phi}{H \Phi^T}_\eta \, \widetilde{p}(x) $ is given by
$$
\psh{\widetilde{p}}{f}^T \psh{\Phi}{H \Phi^T}_\eta \, \widetilde{p}(x) = \psh{\widetilde{p}}{f}^T \psh{\Phi'}{{\Phi'}^T}_\eta \, \widetilde{p}(x) - Z_0 \psh{\widetilde{p}}{f}^T \Phi(0) \Phi(0)^T \widetilde{p}(x).
$$
Like in item 1 above, we can show that 
\begin{equation}
\label{eq:delta0_PAW}
\left| \psh{\widetilde{p}}{f}^T \Phi(0) \Phi(0)^T \widetilde{p}(x) \right| \leq C \|f\|_{\infty,\eta} \left|\rho\left(\tfrac{x}{\eta}\right) P \left(\tfrac{x}{\eta}\right) \right|.
\end{equation}
Using item 3 of Lemma \ref{lem:matrice_moche},
$$
\left\| \left( \begin{array}{c} C_1^{-1} \\ \hline 0 \end{array} \right) \Phi' \right\|_{\infty,\eta} \leq \frac{C}{\eta},
$$
we get 
\begin{equation}
\label{eq:Phi_prime_PAW}
\left|  \psh{\widetilde{p}}{f}^T \psh{\Phi'}{{\Phi'}^T}_\eta \, \widetilde{p}(x) \right| \leq \frac{C}{\eta} \|f\|_{\infty,\eta} \left|\rho\left(\tfrac{x}{\eta}\right) P \left(\tfrac{x}{\eta}\right) \right|.
\end{equation}

\item Finally, for $\psh{\widetilde{p}}{f}^T \psh{\widetilde{\Phi}}{H_\mathrm{ps} \widetilde{\Phi}^T}_\eta \, \widetilde{p}(x)$, we have
$$
\psh{\widetilde{p}}{f}^T \psh{\widetilde{\Phi}}{H_\mathrm{ps} \widetilde{\Phi}^T}_\eta \, \widetilde{p}(x) = \psh{\widetilde{p}}{f}^T \psh{\widetilde{\Phi}'}{\widetilde{\Phi}^\prime{}^T}_\eta \, \widetilde{p}(x) - \frac{Z_0}{\epsilon} \psh{\widetilde{p}}{f}^T  \itg{-\epsilon}{\epsilon}{\chi \left( \tfrac{t}{\epsilon} \right) P( \tfrac{t}{\eta} ) P( \tfrac{t}{\eta} )^T }{t} \ \widetilde{p}(x).
$$
Since $ \epsilon \leq \eta$, $\left| \itg{-\epsilon}{\epsilon}{\chi \left( \tfrac{t}{\epsilon} \right) P( \tfrac{t}{\eta} ) P( \tfrac{t}{\eta} )^T }{t} \right| \leq C \epsilon$ where $C$ is independent of $\eta$ and $\epsilon$. 
Moreover, 
$$
\psh{\widetilde{p}}{f}^T \psh{\widetilde{\Phi}^\prime}{\widetilde{\Phi}^\prime{}^T}_\eta \, \widetilde{p}(x) = \frac{1}{\eta^2} \left( M_\eta \itg{-1}{1}{\rho(t) f(\eta t) P(t)}{t} \right)^T \psh{ P' ( \tfrac{\cdot}{\eta} ) }{P' ( \tfrac{\cdot}{\eta} )^T }_\eta \, M_\eta \rho\left(\tfrac{x}{\eta}\right) P \left(\tfrac{x}{\eta}\right),
$$
hence 
\begin{equation*}
\left| \psh{\widetilde{p}}{f}^T \psh{\widetilde{\Phi}^\prime}{\widetilde{\Phi}^\prime{}^T}_\eta \, \widetilde{p}(x) \right| \leq \frac{C}{\eta} \|f\|_{\infty,\eta} \left|\rho\left(\tfrac{x}{\eta}\right) P \left(\tfrac{x}{\eta}\right) \right|.
\end{equation*}
\end{enumerate}

\paragraph{Iteration}
Suppose the statement is true for any $k \leq n$. We derivate \eqref{eq:fpaw_der2} $(n-1)$ times
\begin{multline}
\label{eq:fpaw_dern}
- f^{(n+1)}(x) + \frac{1}{\epsilon} \left( \chi\left( \tfrac{\cdot}{\epsilon} \right)f \right)^{(n-1)}(x) + \psh{\widetilde{p}}{f}^T \left( \psh{\Phi}{H \Phi^T}_\eta - \psh{\widetilde{\Phi}}{H_\mathrm{ps} \widetilde{\Phi}^T}_\eta \right) \widetilde{p}^{(n-1)}(x) \\
= E^{PAW} \left(f^{(n-1)}(x) + \psh{\widetilde{p}}{f}^T \left( \psh{\Phi}{ \Phi^T}_\eta - \psh{\widetilde{\Phi}}{\widetilde{\Phi}^T}_\eta \right) \widetilde{p}^{(n-1)}(x) \right).
\end{multline}
By the induction hypothesis and since $\epsilon \leq \eta$, we have
\begin{equation}
\label{eq:iter1}
\left| \frac{1}{\epsilon} \left(\chi\left( \tfrac{\cdot}{\epsilon} \right) f \right)^{(n-1)}(x) \right| \leq C \left( \frac{\|f\|_{\infty,\eta}}{\epsilon^n} + \sumlim{k=1}{n-1} \frac{\|f^{(k)}\|_{\infty,\eta}}{\epsilon^{n-k}} \right) \leq C \frac{\|f\|_{\infty,\eta}}{\epsilon^{n}}. 
\end{equation}
We simply give an estimate of the term 
$$
\psh{\widetilde{p}}{f}^T \psh{\Phi}{H \Phi^T}_\eta \widetilde{p}^{(n-1)}(x),
$$
since the other terms appearing in \eqref{eq:fpaw_dern} can be treated the same way. By \eqref{eq:delta0_PAW}, we already know that
$$
\left| \psh{\widetilde{p}}{f}^T \Phi(0) \Phi(0)^T \widetilde{p}^{(n-1)}(x) \right| \leq \frac{C}{\eta^{n-1}} \|f\|_{\infty,\eta} \left| (\rho P)^{(n-1)}(\tfrac{x}{\eta}) \right| \leq \frac{C}{\eta^{n-1}} \|f\|_{\infty,\eta}.
$$
By \eqref{eq:Phi_prime_PAW}, we have
\begin{equation}
\label{eq:iter2}
\left|  \psh{\widetilde{p}}{f}^T \psh{\Phi'}{{\Phi'}^T}_\eta \, \widetilde{p}^{(n-1)}(x) \right| \leq \frac{C}{\eta^n} \|f\|_{\infty,\eta} \left| (\rho P)^{(n-1)}(\tfrac{x}{\eta}) \right| \leq \frac{C}{\eta^{n}} \|f\|_{\infty,\eta}.
\end{equation}
Injecting \eqref{eq:iter1} and \eqref{eq:iter2} in \eqref{eq:fpaw_dern} finishes the proof.
\end{proof}

First, an estimation of the best approximation by $(\widetilde{\phi}_i)_{1 \leq i \leq N}$ of the even part $f_e$ of the PAW eigenfunction $f$ is proved. 

\begin{lem}
\label{lem:f_paw_approx}
Let $f$ be an eigenfunction associated to the lowest eigenvalue of \eqref{eq:PAW_eig_pb_pseudo} and let $f_e$ be the even part of $f$. Suppose that $\epsilon \leq \eta$. Then there exists a family of coefficients $(\alpha_i)_{1 \leq i \leq N}$ and $C$ independent of $\eta$ and $\epsilon$ such that 
\begin{equation*}
\left\| f_e - \sumlim{i=1}{N} \alpha_i \widetilde{\phi}_i^I \right\|_{\infty,\eta,I} \leq C \eta \left( \frac{\eta}{\epsilon} \right)^{2N-1} \| f \|_{\infty,\eta,I},
\end{equation*}
and for the same family of coefficients 
$$
\left\| f_e' - \sumlim{i=1}{N} \alpha_i \widetilde{\phi}_i^I{}^\prime \right\|_{\infty,\eta,I} \leq C \left( \frac{\eta}{\epsilon} \right)^{2N} \| f \|_{\infty,\eta,I}.
$$
\end{lem}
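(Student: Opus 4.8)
The plan is to build the approximant by matching the low-order Taylor expansion of $f_e$ at the site, exploiting that the pseudo wave functions are even polynomials. I treat $I=0$; the case $I=a$ is identical after translation. Since the pseudopotential $\chi_\epsilon$ is smooth, $f$ is smooth on $(-\eta,\eta)$, and so is its even part $f_e$; moreover $\|f_e^{(k)}\|_{\infty,\eta,0}\le\|f^{(k)}\|_{\infty,\eta,0}$, so Lemma~\ref{lem:der_f_paw} together with $\epsilon\le\eta$ yields
\begin{equation*}
\|f_e^{(k)}\|_{\infty,\eta,0}\le\frac{C}{\epsilon^{k-1}}\|f\|_{\infty,\eta,0},\qquad k\in\N^*.
\end{equation*}
Recall from Lemma~\ref{lem:matrice_moche} that on $(-\eta,\eta)$ one has $\widetilde\Phi_0(x)=C^{(P)}_\eta P(x/\eta)$ with $C^{(P)}_\eta=(C_1\,|\,C_2)$ and $C_1\in\R^{N\times N}$ invertible. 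As $P_0,\dots,P_{N-1}$ form a basis of the even polynomials of degree at most $2N-2$, there is a unique $\beta\in\R^N$ such that $\beta^T(P_0(x/\eta),\dots,P_{N-1}(x/\eta))^T$ equals the degree-$(2N-2)$ Taylor polynomial of $f_e$ at $0$. I then set $\alpha^T:=\beta^TC_1^{-1}$, so that $\alpha$ is the vector of coefficients realising this Taylor polynomial through the $\widetilde\phi_i^0$.

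With this choice, writing $P^{\le}$ and $P^{>}$ for the sub-vectors of $P$ made of the first $N$ and the last $d-N$ entries,
\begin{equation*}
\sumlim{i=1}{N}\alpha_i\widetilde\phi_i^0(x)=\alpha^T\widetilde\Phi_0(x)=\beta^TP^{\le}(x/\eta)+\big(\beta^TC_1^{-1}C_2\big)P^{>}(x/\eta),
\end{equation*}
so that $f_e-\sum_i\alpha_i\widetilde\phi_i^0$ splits into the Taylor remainder $f_e-\beta^TP^{\le}(\cdot/\eta)$ and the forced high-order tail $-(\beta^TC_1^{-1}C_2)P^{>}(\cdot/\eta)$. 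Since $f_e$ is even, the first omitted Taylor monomial has degree $2N$, and Taylor's theorem with the derivative bound gives
\begin{equation*}
\big\|f_e-\beta^TP^{\le}(\cdot/\eta)\big\|_{\infty,\eta,0}\le C\,\|f_e^{(2N)}\|_{\infty,\eta,0}\,\eta^{2N}\le C\,\eta\Big(\frac{\eta}{\epsilon}\Big)^{2N-1}\|f\|_{\infty,\eta,0},
\end{equation*}
which already matches the target order.

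It remains to check that the tail is of lower order. Each $P_k(\cdot/\eta)$ is bounded on $(-\eta,\eta)$ uniformly in $\eta$, so $\|(\beta^TC_1^{-1}C_2)P^{>}(\cdot/\eta)\|_{\infty,\eta,0}\le C\,\|C_1^{-1}C_2\|\,|\beta|$. Matching the Taylor coefficients amounts to solving an upper-triangular linear system whose $k$-th diagonal entry scales like $\eta^{-2k}$ (the leading coefficient of $P_k(\cdot/\eta)$); back-substitution together with $|f_e^{(2j)}(0)|\le C\epsilon^{-(2j-1)}\|f\|$ gives $|\beta_k|\le C\,\eta^{2k}\epsilon^{-(2k-1)}\|f\|=C\,\eta(\eta/\epsilon)^{2k-1}\|f\|$, hence $|\beta|\le C\,\eta(\eta/\epsilon)^{2N-3}\|f\|_{\infty,\eta,0}$. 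Using the uniform-in-$\eta$ bound on $C_1^{-1}C_2$ coming from the matrix analysis of \cite{blanc2017vpaw1d} (as in Lemma~\ref{lem:matrice_moche}), the tail is at most $C\eta(\eta/\epsilon)^{2N-3}\|f\|_{\infty,\eta,0}\le C\eta(\eta/\epsilon)^{2N-1}\|f\|_{\infty,\eta,0}$, which proves the first estimate. For the derivative estimate I differentiate the same decomposition: the Taylor remainder of $f_e'$ starts at degree $2N-1$ and is bounded by $C\|f_e^{(2N)}\|_{\infty,\eta,0}\,\eta^{2N-1}\le C(\eta/\epsilon)^{2N-1}\|f\|_{\infty,\eta,0}$, while the tail derivative equals $\tfrac1\eta(\beta^TC_1^{-1}C_2)(P^{>})'(\cdot/\eta)$, of size $C(\eta/\epsilon)^{2N-3}\|f\|_{\infty,\eta,0}$; both are $\le C(\eta/\epsilon)^{2N}\|f\|_{\infty,\eta,0}$. \emph{The main obstacle} is precisely the control of this forced tail: one must bound $C_1^{-1}C_2$ uniformly in $\eta$ and keep careful track of the powers of $\eta$ generated by the triangular coefficient system, so as to guarantee that the tail never exceeds the Taylor remainder. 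The remaining steps are routine Taylor estimates.
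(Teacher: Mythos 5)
Your construction over-matches: you interpolate the full degree-$(2N-2)$ Taylor polynomial of $f_e$ through $C_1^{-1}$, and the argument then breaks at the control of the forced tail, precisely the point you flag as the main obstacle. The coefficient bound you claim, $|\beta_k| \leq C\,\eta\left(\eta/\epsilon\right)^{2k-1}\|f\|_{\infty,\eta,I}$, is false at $k=0$: the constant Taylor coefficient of $f_e$ is $f(0)$, and since the change of basis from the monomials $t^{2k}$ to the fixed polynomials $P_k$ is triangular with $\eta$-independent entries, $\beta_0 = f(0) + \cO\big(\eta(\eta/\epsilon)\|f\|_{\infty,\eta,I}\big)$, which is generically of order $\|f\|_{\infty,\eta,I}$ (the ground state does not vanish at the nucleus); your formula would give $|\beta_0| \leq C\epsilon \|f\|_{\infty,\eta,I}$. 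Hence the asserted $|\beta| \leq C\eta(\eta/\epsilon)^{2N-3}\|f\|_{\infty,\eta,I}$ is wrong, and with the corrected $|\beta| = \cO(\|f\|_{\infty,\eta,I})$, even granting $\|C_1^{-1}C_2\| = \cO(1)$, the tail $(\beta^T C_1^{-1}C_2)\,P^{>}(\cdot/\eta)$ is only $\cO(\|f\|_{\infty,\eta,I})$ and its derivative $\cO(\|f\|_{\infty,\eta,I}/\eta)$ --- both exceed the targets, which for $\epsilon = \eta$ read $C\eta\|f\|_{\infty,\eta,I}$ and $C\|f\|_{\infty,\eta,I}$. Rescuing this route would require the row-wise refinement $e_0^T C_1^{-1} C_2 = \cO(\eta)$, which you neither state nor prove. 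Moreover, the uniform bound on $C_1^{-1}C_2$ itself is not among the conclusions of Lemma~\ref{lem:matrice_moche} (which controls $M_\eta$, $C_1^{-1}\Phi_I$ and $C_1^{-1}\Phi_I'$ only), and it is not innocuous: by \eqref{eq:C_eta}, $C_\eta^{(P)} = \Phi(\eta)e_0^T + \eta\Phi'(\eta)\beta_1^T + \cO(\eta^2)$ is of rank two up to $\cO(\eta^2)$, so $C_1^{-1}$ blows up as $\eta \to 0$ and boundedness of the product $C_1^{-1}C_2$ would need a genuine structural argument.

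The paper's proof shows that this whole difficulty can be avoided because the target bounds are weak (they are no better than $C\eta\|f\|_{\infty,\eta,I}$ and $C\|f\|_{\infty,\eta,I}$, since $\eta/\epsilon \geq 1$). By Lemma~\ref{lem:der_f_paw}, every Taylor coefficient with $k \geq 1$ already satisfies $\big|f^{(2k)}(0)\big|\eta^{2k}/(2k)! \leq C\eta(\eta/\epsilon)^{2k-1}\|f\|_{\infty,\eta,I}$, i.e.\ it is within the allowed error, so nothing beyond the constant term needs to be matched. The paper therefore chooses $\alpha$ with $\alpha^T\Phi_I(\eta) = f(0)$ (possible since $\Phi_I(\eta) \neq 0$, with $|\alpha| \leq C\|f\|_{\infty,\eta,I}$), and \eqref{eq:C_eta} with $\beta_1$ uniformly bounded yields $\alpha^T\widetilde{\Phi}_I(\eta t) = f(0) + \cO(\eta\,\|f\|_{\infty,\eta,I})$; for the derivative estimate, $P_0' = 0$ gives $\|\widetilde{\Phi}_I'\|_{\infty,\eta,I} = \cO(1)$, which suffices since the target there is of order $\|f\|_{\infty,\eta,I}$. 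No inversion of $C_1$ and no tail control ever enter. Your Taylor-remainder estimates coincide with the paper's and are correct; the gap is confined to, and fatal for, the tail produced by matching the higher coefficients.
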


\begin{proof} For clarity, we will drop the index $I$ in this proof. First we write the Taylor expansion of $f$ around $0$, for $|x|\leq \eta$ :
\begin{equation*}
f_e(x) = \sumlim{k=0}{N-1} \frac{f^{(2k)}(0)}{(2k)!} x^{2k} + R_{2N}(f)(x) ,
\end{equation*}
where $R_{2N}(f)$ is the integral form of the remainder
\begin{equation*}
R_{2N}(f)(x) = \itg{0}{x}{\frac{f^{(2N)}(t)}{(2N-1)!} (x-t)^{2N-1}}{t}.
\end{equation*}
The remainder $R_{2N}(f)$ satisfies 
\begin{align*}
|R_{2N}(f)(x)| & \leq C \eta^{2N} \left\|f^{(2N)}\right\|_{\infty,\eta} \\
& \leq C \eta\left( \tfrac{\eta}{\epsilon} \right)^{2N-1} \|f\|_{\infty,\eta},
\end{align*}
where, in the second inequality, we used Lemma \ref{lem:der_f_paw}. Thus, the best approximation of $f$ by a linear combination of $(\widetilde{\phi}_k)_{1 \leq k \leq N}$ is at most of order $\eta$. In the remainder of the proof, we will show that this order is attainable. Setting $t = \tfrac{x}{\eta}$, we obtain 
\begin{equation*}
f_e(x) - \sumlim{i=1}{N} \alpha_i \widetilde{\phi}_i(x) = \sumlim{k=0}{N-1} \frac{f^{(2k)}(0)}{(2k)!} \eta^{2k} t^{2k} - \sumlim{i=1}{N} \alpha_i \widetilde{\phi}_i(\eta t) + R_{2N}(f)(\eta t).
\end{equation*}
By Lemma \ref{lem:der_f_paw}, we have for all $1 \leq k \leq N-1$:
\begin{equation*}
\left| \frac{f^{(2k)}(0)}{(2k)!} \eta^{2k} \right| \leq C \eta \left( \frac{\eta}{\epsilon} \right)^{2k-1}. 
\end{equation*}
The family $(\widetilde{\phi}_j)_{1 \leq j \leq N}$ satisfies
\begin{equation*}
\widetilde{\Phi}(x) = C_\eta^{(P)} P(\tfrac{x}{\eta}),
\end{equation*}
where $P(t)$ is the vector of polynomials $P_k(t) = \frac{1}{2^k k!}(t^2-1)^{k}$.
By Lemma 4.9 in \cite{blanc2017vpaw1d}, we know that $C_\eta^{(P)}$ can be written:
\begin{equation}
\label{eq:C_eta}
C_\eta^{(P)} = \Phi(\eta)e_0^T + \eta \Phi'(\eta) \beta_1^T+ \cO(\eta^2),
\end{equation}
where $\beta_1$ is a vector of $\R^d$ uniformly bounded in $\eta$. Thus we have 
\begin{equation*}
\sumlim{k=0}{N-1} \frac{f^{(2k)}(0)}{(2k)!} \eta^{2k} t^{2k} - \sumlim{i=1}{N} \alpha_i \widetilde{\phi}_i(\eta t) = f(0) - \alpha^T \Phi(\eta) + \cO\left( \eta \left( \frac{\eta}{\epsilon} \right)^{2N-1} \right). 
\end{equation*}
To get the result, $\alpha$ has to be chosen such that $\alpha^T \Phi(\eta) = f(0)$, which is possible because $\Phi(\eta) \not= 0$.

For $f'_e$, we proceed the same way. However, by Lemma \ref{lem:der_f_paw}, the remainder of the Taylor expansion of $f'_e$ satisfies
$$
|R_{2N}(f')(x)| \leq C \eta^{2N} \left\|f^{(2N+1)}\right\|_{\infty,\eta} \leq C \left( \tfrac{\eta}{\epsilon} \right)^{2N} \|f\|_{\infty,\eta}.
$$ 
We simply have to check that $\| \widetilde{\Phi}' \|_{\infty, \eta}$ is bounded when $\eta$ goes to 0. By \eqref{eq:C_eta} and because $P_0' = 0$, 
$$
\widetilde{\Phi}'(x) = \Phi'(\eta) \beta_1^T P'(\tfrac{x}{\eta})+ \cO(\eta),
$$
hence  $\| \widetilde{\Phi}' \|_{\infty, \eta}$ is bounded when $\eta$ goes to 0.
\end{proof}

We can now give an estimate for $f_e - \sumlim{i=1}{N} \psh{\widetilde{p}}{f} \phi_i$.

\begin{lem}
\label{lem:f_paw_approx2}
Assume that $f$ is the generalized eigenfunction of \eqref{eq:PAW_eig_pb_pseudo} associated the lowest generalized eigenvalue. Let $f_e$ be the even part of $f$. Then 
\begin{equation*}
\left\| f_e - \psh{\widetilde{p}^I}{f}^T \widetilde{\Phi}_I \right\|_{\infty,\eta,I} \leq C \eta \left( \frac{\eta}{\epsilon} \right)^{2N-1} \|f\|_{\infty,\eta,I}, 
\end{equation*}
and 
\begin{equation*}
\left\| f_e' - \psh{\widetilde{p}^I}{f}^T \widetilde{\Phi}_I' \right\|_{\infty,\eta,I} \leq C \left( \frac{\eta}{\epsilon} \right)^{2N-1} \|f\|_{\infty,\eta,I}.
\end{equation*}
\end{lem}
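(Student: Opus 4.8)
The plan is to exploit that $Q g := \psh{\widetilde{p}^I}{g}^T \widetilde{\Phi}_I$ is an $L^\infty(-\eta,\eta)$-bounded \emph{projection} onto $\mathrm{Vect}(\widetilde{\phi}_1^I,\dots,\widetilde{\phi}_N^I)$, and to reduce the claim to the approximation estimate of Lemma~\ref{lem:f_paw_approx}. The duality relation $\psh{\widetilde{p}_i^I}{\widetilde{\phi}_j^I}=\delta_{ij}$ gives $Q v = v$ for every $v$ in the span of the $\widetilde{\phi}_i^I$, and since the projector functions are even about $I$ one has $Q f = Q f_e$, so that for any such $v$
\[
f_e - \psh{\widetilde{p}^I}{f}^T \widetilde{\Phi}_I = f_e - Q f_e = (f_e - v) - Q(f_e - v).
\]
By item~2 of Lemma~\ref{lem:matrice_moche}, $Q g(x)=\bigl(M_\eta \int_{-1}^{1}\rho(t) g(\eta t) P(t)\,\mathrm{d}t\bigr)^T P(x/\eta)$; the uniform bound on $M_\eta$ and the boundedness of $P$ on $(-1,1)$ give $\|Q g\|_{\infty,\eta,I}\leq C\|g\|_{\infty,\eta,I}$, while differentiating hits only $P(x/\eta)$ and produces one factor $1/\eta$, whence $\|(Q g)'\|_{\infty,\eta,I}\leq \tfrac{C}{\eta}\|g\|_{\infty,\eta,I}$. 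These two inequalities are the workhorses of the proof.

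For the first (function) estimate I would take $v=\sum_i \alpha_i \widetilde{\phi}_i^I$ from Lemma~\ref{lem:f_paw_approx}; then $\|f_e - Q f_e\|_{\infty,\eta,I}\leq(1+C)\|f_e - v\|_{\infty,\eta,I}\leq C\eta(\eta/\epsilon)^{2N-1}\|f\|_{\infty,\eta,I}$, which is exactly the claim.

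The hard part is the derivative estimate, because differentiating the approximant of Lemma~\ref{lem:f_paw_approx} directly yields only the weaker bound $(\eta/\epsilon)^{2N}$ (its derivative bound uses $f^{(2N+1)}$), and one power must be recovered. First I would write $f_e = \Pi + R$, with $\Pi$ the even Taylor polynomial of $f_e$ at $I$ of degree $2N-2$ and $R$ the integral remainder. As in Lemma~\ref{lem:f_paw_approx}, Lemma~\ref{lem:der_f_paw} gives $\|R\|_{\infty,\eta,I}\leq C\eta(\eta/\epsilon)^{2N-1}\|f\|_{\infty,\eta,I}$; the key observation is that $R'$ is the integral remainder with kernel $(x-t)^{2N-2}$, so it involves \emph{only} $f^{(2N)}$ and satisfies $\|R'\|_{\infty,\eta,I}\leq C\eta^{2N-1}\|f^{(2N)}\|_{\infty,\eta,I}\leq C(\eta/\epsilon)^{2N-1}\|f\|_{\infty,\eta,I}$ by Lemma~\ref{lem:der_f_paw} with $k=2N$ and $\epsilon\leq\eta$ — one power better than expanding $f_e'$.

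Then I would decompose
\[
\bigl(f_e - Q f_e\bigr)' = \bigl(R - Q R\bigr)' + \bigl(\Pi - Q \Pi\bigr)'.
\]
For the first term, $\|(R - Q R)'\|_{\infty,\eta,I}\leq \|R'\|_{\infty,\eta,I}+\tfrac{C}{\eta}\|R\|_{\infty,\eta,I}\leq C(\eta/\epsilon)^{2N-1}\|f\|_{\infty,\eta,I}$. For the second term, $\Pi - Q\Pi$ is a polynomial of fixed degree $\leq 2d-2$ on an interval of length $2\eta$, whose sup-norm is controlled by the already-established function estimate, $\|\Pi - Q\Pi\|_{\infty,\eta,I}\leq \|f_e - Q f_e\|_{\infty,\eta,I}+\|R - Q R\|_{\infty,\eta,I}\leq C\eta(\eta/\epsilon)^{2N-1}\|f\|_{\infty,\eta,I}$; Markov's inequality scaled to $(-\eta,\eta)$ (the constant is $\eta$-independent since the degree is fixed) then gives $\|(\Pi - Q\Pi)'\|_{\infty,\eta,I}\leq \tfrac{C}{\eta}\|\Pi - Q\Pi\|_{\infty,\eta,I}\leq C(\eta/\epsilon)^{2N-1}\|f\|_{\infty,\eta,I}$. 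Summing the two contributions yields the claimed derivative bound. The case $I=a$ is identical after translating the center to $a$. The main obstacle, as indicated, is precisely this one-power gain in the derivative, obtained by combining the remainder-derivative bound (which spends only $f^{(2N)}$) with Markov's inequality applied to the polynomial part, rather than naively differentiating the approximant.
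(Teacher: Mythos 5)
Your proof is correct, and its first half coincides with the paper's own argument: the paper likewise uses the duality relation to write $f_e - \psh{\widetilde{p}^I}{f}^T \widetilde{\Phi}_I = \bigl(f_e - \sum_{j} \alpha_j \widetilde{\phi}_j^I\bigr) - \psh{\widetilde{p}^I}{f_e - \sum_{j} \alpha_j \widetilde{\phi}_j^I}^T \widetilde{\Phi}_I$ (quasi-optimality of the oblique projection $Qg := \psh{\widetilde{p}^I}{g}^T \widetilde{\Phi}_I$, silently replacing $f$ by $f_e$ in the pairing exactly as you do — note that this step, in both your proof and the paper's, tacitly requires the cut-off $\rho$ to be even), and concludes with Lemma \ref{lem:f_paw_approx} and item 3 of Lemma \ref{lem:widetilde_p_stuff}. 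The derivative half is where you genuinely diverge, and your instinct about the lost power is well founded: the paper bounds $(f_e - Qf_e)'$ by differentiating the same decomposition, controlling the projector term through \eqref{eq:tdpPhi_raffined} and quoting, for $f_e' - \sum_j \alpha_j (\widetilde{\phi}_j^I)'$, the bound $C(\eta/\epsilon)^{2N-1}\|f\|_{\infty,\eta,I}$ — whereas Lemma \ref{lem:f_paw_approx} as stated only provides $C(\eta/\epsilon)^{2N}\|f\|_{\infty,\eta,I}$, since its proof expands $f'$ and spends $f^{(2N+1)}$. This internal inconsistency is harmless downstream (the paper fixes $\epsilon=\eta$ immediately after, making all these factors equal to $1$), but taken at face value the paper's proof of the derivative estimate only delivers the exponent $2N$. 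Your Taylor-plus-Markov argument repairs exactly this: the derivative of the integral remainder of $f_e$ involves only $f^{(2N)}$, hence costs $(\eta/\epsilon)^{2N-1}$ via Lemma \ref{lem:der_f_paw}; the bound $\|(Qg)'\|_{\infty,\eta,I} \leq C\eta^{-1}\|g\|_{\infty,\eta,I}$ does follow from item 2 of Lemma \ref{lem:matrice_moche} as you claim; and since $\Pi - Q\Pi$ restricted to the cut-off region is a polynomial of fixed degree at most $2d-2$ (recall $d \geq N$), the Markov inequality scaled to $(-\eta,\eta)$ legitimately converts the already-established sup-norm bound into the derivative bound at the cost of one factor $\eta^{-1}$, with an $\eta$-independent constant. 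In short, your route is slightly longer but proves the exponent $2N-1$ literally stated in the lemma, while the paper's shorter route implicitly relies on a sharpened, unstated version of the derivative estimate in Lemma \ref{lem:f_paw_approx}.
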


\begin{proof}
For clarity, we will drop the index $I$. For any family $(\alpha_j)_{1 \leq j \leq N}$, we have for $x \in (-\eta,\eta)$
\begin{align*}
f_e(x) - \psh{\widetilde{p}}{f}^T \widetilde{\Phi}(x) & = f_e(x) - \psh{\widetilde{p}}{f_e - \sumlim{j=1}{N} \alpha_j \widetilde{\phi}_j + \sumlim{j=1}{N} \alpha_j \widetilde{\phi}_j}^T \widetilde{\Phi}(x) \\
& = f_e(x) - \sumlim{j=1}{N} \alpha_j \widetilde{\phi}_j - \psh{\widetilde{p}}{f_e - \sumlim{j=1}{N} \alpha_j \widetilde{\phi}_j}^T \widetilde{\Phi}(x).
\end{align*}
By Lemma \ref{lem:f_paw_approx},  $(\alpha_j)_{1 \leq j \leq N}$ can be chosen such that for any $x \in (-\eta,\eta)$
$$
\left| f_e(x) - \sumlim{j=1}{N} \alpha_j \widetilde{\phi}_j(x) \right| \leq C \eta \left( \frac{\eta}{\epsilon} \right)^{2N-1} \|f\|_{\infty,\eta}.
$$
Thus by item 3 of Lemma \ref{lem:widetilde_p_stuff}, 
\begin{equation*}
\left\| f_e - \psh{\widetilde{p}}{f}^T \widetilde{\Phi} \right\|_{\infty,\eta} \leq C \eta \left( \frac{\eta}{\epsilon} \right)^{2N-1} \|f\|_{\infty,\eta}.
\end{equation*}

Similarly, we have by item4 of Lemma \ref{lem:widetilde_p_stuff} for any function $g \in H^1_\mathrm{per}(0,1)$ with $g' \in L^\infty(-\eta,\eta)$,
\begin{equation}
\label{eq:tdpPhi_raffined}
\left| \psh{\widetilde{p}}{g}^T \widetilde{\Phi}'(x) \right| \leq C \|g\|_{H^1,\eta} \leq C \eta^{1/2}(\|g\|_{\infty,\eta} + \|g'\|_{\infty,\eta}),
\end{equation}
and with the same coefficients $(\alpha_j)$, 
$$
\left| f_e'(x) - \sumlim{j=1}{N} \alpha_j \widetilde{\phi}'_j \right|\leq  C \left( \frac{\eta}{\epsilon} \right)^{2N-1} \|f\|_{\infty,\eta}.
$$
So,
\begin{align*}
\left\| f_e' - \psh{\widetilde{p}}{f}^T \widetilde{\Phi}' \right\|_{\infty,\eta} & \leq  \left\| f_e' - \sumlim{j=1}{N} \alpha_j \widetilde{\phi}'_j \right\|_{\infty,\eta} + \left\|  \psh{\widetilde{p}}{f_e - \sumlim{j=1}{N} \alpha_j \widetilde{\phi}_j}^T \widetilde{\Phi}' \right\|_{\infty,\eta} \\
& \leq C  \left( \frac{\eta}{\epsilon} \right)^{2N-1} \|f\|_{\infty,\eta},
\end{align*}
where in the last inequality, we used \eqref{eq:tdpPhi_raffined} with Lemma \ref{lem:f_paw_approx}.
\end{proof}

In the proof of the lower bound of Theorem \ref{thm:E_PAW_bound}, we will need to bound terms of the form $\left\| f_e - \psh{\widetilde{p}^I}{f}^T \widetilde{\Phi}_I \right\|_{\infty, \eta, I}$. If $\epsilon < \eta$, we will get worse bounds than by setting $\epsilon = \eta$. Hence, from now on, we fix $\epsilon = \eta$. 

To estimate the term $\psh{f - \psh{\widetilde{p}^I}{f}^T \widetilde{\Phi}_I}{\delta V\left( f - \psh{\widetilde{p}^I}{f}^T \widetilde{\Phi}_I \right)}_{I,\eta}$, we will need the following estimates.

\begin{lem}
\label{lem:f_paw_approx_DeltaV}
Let $f$ be an eigenfunction associated to the lowest generalized eigenvalue of \eqref{eq:PAW_eig_pb_pseudo}. Then 
\begin{equation*}
\left\| f - \psh{\widetilde{p}^I}{f}^T \widetilde{\Phi}_I \right\|_{\infty,\eta,I} \leq C \eta  \|f\|_{\infty,\eta,I}, 
\end{equation*}
and 
\begin{equation*}
\left\| f' - \psh{\widetilde{p}^I}{f}^T \widetilde{\Phi}_I' \right\|_{\infty,\eta,I} \leq C \|f\|_{\infty,\eta,I}, 
\end{equation*}
\end{lem}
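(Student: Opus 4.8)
The plan is to reduce the estimate for the full PAW eigenfunction $f$ to the even-part estimates of Lemma~\ref{lem:f_paw_approx2}, which, now that $\epsilon=\eta$ is fixed, read $\big\| f_e - \psh{\widetilde{p}^I}{f}^T \widetilde{\Phi}_I \big\|_{\infty,\eta,I} \leq C\eta \|f\|_{\infty,\eta,I}$ and $\big\| f_e' - \psh{\widetilde{p}^I}{f}^T \widetilde{\Phi}_I' \big\|_{\infty,\eta,I} \leq C \|f\|_{\infty,\eta,I}$. The first thing I would record is that the correction term $\psh{\widetilde{p}^I}{f}^T \widetilde{\Phi}_I$ only sees the even part of $f$ about the site $I$: since each $\widetilde{\phi}_j^I$ is even and $\rho_\eta$ is even, the functions $p_j^I = \rho_\eta \widetilde{\phi}_j^I$ and hence the orthogonalized projectors $\widetilde{p}_j^I$ are even about $I$, so that $\psh{\widetilde{p}_j^I}{f} = \psh{\widetilde{p}_j^I}{f_e}$ on the symmetric interval. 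Consequently $\psh{\widetilde{p}^I}{f}^T \widetilde{\Phi}_I$ is exactly the even-part approximant appearing in Lemma~\ref{lem:f_paw_approx2}.

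Writing $f = f_e + f_o$ with $f_o$ the odd part about the site $I$, I would then split
\[
f - \psh{\widetilde{p}^I}{f}^T \widetilde{\Phi}_I = \Big( f_e - \psh{\widetilde{p}^I}{f}^T \widetilde{\Phi}_I \Big) + f_o,
\]
and likewise for the derivatives. The first bracket is controlled directly by Lemma~\ref{lem:f_paw_approx2}, so the whole matter reduces to proving $\|f_o\|_{\infty,\eta,I} \leq C\eta \|f\|_{\infty,\eta,I}$ and $\|f_o'\|_{\infty,\eta,I} \leq C\|f\|_{\infty,\eta,I}$.

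These odd-part bounds follow from the pointwise derivative estimate of Lemma~\ref{lem:der_f_paw}. Taking $k=1$ and $\epsilon=\eta$ there gives $\|f'\|_{\infty,\eta,I} \leq C\|f\|_{\infty,\eta,I}$; since $f_o'$ is an even function about $I$ assembled from $f'$ at reflected points, its sup-norm is dominated by that of $f'$, which yields the second required bound. For the first, I would use that $f_o$ vanishes at the centre $I$ (an odd function about $I$ is zero at $I$), so that $|f_o(x)| \leq |x-I|\,\|f_o'\|_{\infty,\eta,I} \leq \eta\,\|f_o'\|_{\infty,\eta,I}$ for $|x-I|\leq\eta$; combining with the derivative bound gives $\|f_o\|_{\infty,\eta,I} \leq C\eta\|f\|_{\infty,\eta,I}$. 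Adding the two contributions in each split and absorbing constants finishes both inequalities.

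The only genuinely delicate point is the second estimate, where no power of $\eta$ is gained: it relies on bounding $\|f'\|_{\infty,\eta,I}$ by $\|f\|_{\infty,\eta,I}$ with no loss, which is precisely what Lemma~\ref{lem:der_f_paw} furnishes at $k=1$. It is also essential that $\epsilon=\eta$ has been fixed, for otherwise the even-part estimates of Lemma~\ref{lem:f_paw_approx2} would carry the growing factor $(\eta/\epsilon)^{2N-1}$. Finally, one must perform the even/odd splitting about the correct centre $I$ — the double Dirac potential has no global symmetry — but with that care the evenness of the projectors makes the remainder of the argument routine.
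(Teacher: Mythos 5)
Your proof is correct and follows essentially the same route as the paper: split $f = f_e + f_o$ about the site, control the even part via Lemma~\ref{lem:f_paw_approx2} with $\epsilon = \eta$, and bound the odd part by $|f_o(x)| \leq \eta \|f'\|_{\infty,\eta,I} \leq C\eta \|f\|_{\infty,\eta,I}$ using Lemma~\ref{lem:der_f_paw} at $k=1$ (and $\|f_o'\|_{\infty,\eta,I} \leq \|f'\|_{\infty,\eta,I}$ for the derivative estimate). Your preliminary observation that the projectors only see the even part of $f$ is not actually needed (and would require $\rho$ even, which the paper does not assume), since Lemma~\ref{lem:f_paw_approx2} is already stated with the coefficients $\psh{\widetilde{p}^I}{f}$ rather than $\psh{\widetilde{p}^I}{f_e}$, so the triangle inequality applies directly.
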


\begin{proof}
This follows from Lemma \ref{lem:f_paw_approx2} and that the odd part of $f$ is bounded  in $(-\eta, \eta)$ by $\eta \|f'\|_{L^\infty(-\eta,\eta)}$, which is itself bounded by $C \eta \|f\|_{L^\infty(-\eta,\eta)}$ according to Lemma \ref{lem:der_f_paw}. 
\end{proof}

We need a uniform bound in $\eta$ on the PAW eigenfunction $f$, in order to prove Theorem \ref{thm:E_PAW_bound}.

\begin{lem}
\label{lem:H1_boundedness_fPAW}
Let $f$ be an $L^2_\mathrm{per}$-normalized eigenfunctions associated to the first eigenvalue of \eqref{eq:PAW_eig_pb_pseudo}. Then there exists a positive constant $C$ independent of $\eta$ such that for all $0 < \eta \leq \eta_0$
$$
\| f \|_{H^1_\mathrm{per}} \leq C.
$$ 
\end{lem}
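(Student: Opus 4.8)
The plan is to reuse the coercivity estimate already established inside the proof of Lemma \ref{lem:E_PAW_bounded_below}, so that almost no new work is required. Recall that the latter proof produced, for an $L^2_\mathrm{per}$-normalized generalized eigenfunction $f$ of \eqref{eq:PAW_eig_pb_pseudo}, the inequality \eqref{eq:f_PAW_bounded}:
$$
\alpha \psh{f}{f} + E^{PAW} \psh{f}{S^{PAW}f} \geq C \|f\|_{H^1_\mathrm{per}}^2,
$$
where $\alpha$ is the coercivity constant of $H$ and $C>0$ is independent of $\eta$. Since the right-hand side already controls $\|f\|_{H^1_\mathrm{per}}^2$, it suffices to bound the left-hand side by a constant independent of $\eta$.

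First I would use the normalization $\psh{f}{f}=1$ to dispose of the first term. The second term is handled by showing that both of its factors are uniformly bounded. For the eigenvalue, Lemma \ref{lem:E_PAW_bounded_below} provides a uniform lower bound on $E^{PAW}$ as $\eta \to 0$, while the upper bound $E^{PAW} \leq E_0 + C\eta^2$ obtained in Section \ref{subsec:proof_upper_bound_PAW_pseudo} gives a matching upper bound; together they show that $|E^{PAW}|$ stays below some constant $M$ for all $0 < \eta \leq \eta_0$. For the quadratic form, I would invoke item 2 of Lemma \ref{lem:widetilde_S_L2_bound} together with $S^{PAW}=S^N$ to get $|\psh{f}{S^{PAW}f}| \leq C\|f\|_{L^2_\mathrm{per}}^2 = C$. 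Combining these, $\alpha + E^{PAW}\psh{f}{S^{PAW}f} \leq \alpha + MC =: C'$, whence $\|f\|_{H^1_\mathrm{per}}^2 \leq C'/C$, a bound independent of $\eta$.

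An equivalent route, mirroring the conclusion of Lemma \ref{lem:H1_boundedness_f}, is to substitute item 5 of Lemma \ref{lem:widetilde_S_L2_bound}, namely $\psh{f}{S^N f} = \psh{f}{f} + \cO(\eta\|f\|_{H^1_\mathrm{per}}^2)$, into \eqref{eq:f_PAW_bounded}; since $|E^{PAW}|$ is bounded, the resulting $\cO(\eta\|f\|_{H^1_\mathrm{per}}^2)$ contribution can be absorbed into the right-hand side for $\eta$ small enough, leaving $(E^{PAW}+\alpha)\psh{f}{f} \geq \tfrac{C}{2}\|f\|_{H^1_\mathrm{per}}^2$, which again yields the claim after using $\psh{f}{f}=1$.

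There is essentially no obstacle here, the bulk of the work having been done in Lemma \ref{lem:E_PAW_bounded_below}. The only point requiring care is that \eqref{eq:f_PAW_bounded} alone does not close the argument without a \emph{two-sided} control of $E^{PAW}$, so the main thing to verify is that the upper bound of Theorem \ref{thm:E_PAW_bound} — proved independently in Section \ref{subsec:proof_upper_bound_PAW_pseudo} and not relying on this lemma, so that no circularity arises — may legitimately be cited, together with Lemma \ref{lem:E_PAW_bounded_below}, to bound $|E^{PAW}|$ uniformly in $\eta$. Once this is granted, the estimate is immediate.
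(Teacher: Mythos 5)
Your proof is correct and essentially coincides with the paper's, which disposes of this lemma in one line as ``a direct consequence of Equation \eqref{eq:f_PAW_bounded}'': you simply make explicit the ingredients that conclusion tacitly relies on, namely the normalization $\psh{f}{f}=1$, the two-sided control of $E^{PAW}$ (Lemma \ref{lem:E_PAW_bounded_below} from below, the bound $E^{PAW} \leq E_0 + C\eta^2$ of Section \ref{subsec:proof_upper_bound_PAW_pseudo} from above), and the uniform bound on $\psh{f}{S^{PAW}f}$ via items 2 or 5 of Lemma \ref{lem:widetilde_S_L2_bound} with $S^{PAW}=S^N$. Your non-circularity check is also consistent with the paper, which itself invokes $E^{PAW} \leq E_0 + C\eta^2$ independently of this lemma in the proof of Lemma \ref{lem:der_f_paw}.
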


\begin{proof}
This is a direct consequence of Equation \eqref{eq:f_PAW_bounded}.
\end{proof}

We now have all the elements to complete the proof of Theorem \ref{thm:E_PAW_bound}.

\begin{proof}[Proof of the lower bound in Theorem \ref{thm:E_PAW_bound}]
Let $f$ be an $L^2_\mathrm{per}$-normalized generalized eigenfunction of the PAW eigenvalue problem \eqref{eq:PAW_eig_pb_pseudo}. By Proposition \ref{prop:H_PAW_formquad}, we have 
\begin{multline}
\label{eq:low_bnd_pawpseudo_central}
\psh{f}{H^{PAW}f} = \psh{f}{\widetilde{H}f} - 2 \sumlim{I \in \lbrace 0,a \rbrace}{} \psh{f - \psh{\widetilde{p}^I}{f}^T \widetilde{\Phi}_I}{ \psh{\widetilde{p}^I}{f}^T \left(H \Phi_I - (H + \delta V) \widetilde{\Phi}_I\right) } \\
+ \sumlim{I \in \lbrace 0,a \rbrace}{} \psh{f - \psh{\widetilde{p}^I}{f}^T \widetilde{\Phi}_I}{\delta V\left( f - \psh{\widetilde{p}^I}{f}^T \widetilde{\Phi}_I \right)}_{\eta,I}.
\end{multline}
We simply bound terms with $I=0$ as the terms with $I=a$ are treated exactly the same way.  First, we estimate $\psh{f - \psh{\widetilde{p}}{f}^T \widetilde{\Phi}}{\delta V\left( f - \psh{\widetilde{p}}{f}^T \widetilde{\Phi} \right)}_{I,\eta}$. By Lemma \ref{lem:f_paw_approx2}, we have:
\begin{align}
& \left| \psh{f - \psh{\widetilde{p}}{f}^T \widetilde{\Phi}}{\delta V\left( f - \psh{\widetilde{p}}{f}^T \widetilde{\Phi} \right)}_{\eta} \right| & \nonumber \\
& \qquad = Z_0 \left|   \left( f(0) - \psh{\widetilde{p}}{f}^T \widetilde{\Phi}(0) \right)^2 - \itg{-\eta}{\eta}{\chi_\eta (x) \left( f(x) - \psh{\widetilde{p}}{f}^T \widetilde{\Phi}(x) \right)^2  }{x} \right|  \nonumber \\
& \qquad = Z_0 \left|  \itg{-\eta}{\eta}{\chi_\eta (x)\left( \left( f(x) - \psh{\widetilde{p}}{f}^T \widetilde{\Phi}(x) \right)^2 - \left( f(0) - \psh{\widetilde{p}}{f}^T \widetilde{\Phi}(0) \right)^2 \right)}{x}  \right| \nonumber \\
& \qquad  \leq C \eta \left\| f' - \psh{\widetilde{p}}{f}^T \widetilde{\Phi}' \right\|_{\infty,\eta} \left\| f - \psh{\widetilde{p}}{f}^T \widetilde{\Phi} \right\|_{\infty,\eta} \nonumber \\
& \qquad \leq C \eta^2 \| f \|_{\infty,\eta}^2  ,
\label{eq:low_bnd_pawpseudo_1}
\end{align}
where in the last inequality, we applied \text{Lemma \ref{lem:f_paw_approx_DeltaV}}. 

We then estimate $\psh{f' - \psh{\widetilde{p}}{f}^T \widetilde{\Phi}'}{ \psh{\widetilde{p}}{f}^T (\Phi' - \widetilde{\Phi}') }$:
\begin{align}
& \left| \psh{f' - \psh{\widetilde{p}}{f}^T \widetilde{\Phi}'}{ \psh{\widetilde{p}}{f}^T (\Phi' - \widetilde{\Phi}') } \right| \nonumber \\
& \qquad = \left| \itg{-\eta}{\eta}{ \left( f_e'(x) - \psh{\widetilde{p}}{f}^T \widetilde{\Phi}'(x) \right) \psh{\widetilde{p}}{f}^T (\Phi' - \widetilde{\Phi}')(x)  }{x} \right| \nonumber \\
& \qquad \leq C \eta \left\| f_e' - \psh{\widetilde{p}}{f}^T \widetilde{\Phi}' \right\|_{\infty,\eta} \|f\|_{H^1_\mathrm{per}} \nonumber  \\
& \qquad \leq C \eta  \|f\|_{\infty,\eta}\|f\|_{H^1_\mathrm{per}}  ,
\label{eq:low_bnd_pawpseudo_2}
\end{align}
where in the first inequality, we used item 1 of Lemma \ref{lem:widetilde_p_stuff} and in the second, Lemma \ref{lem:f_paw_approx2}. Finally, it remains to estimate $\psh{f - \psh{\widetilde{p}}{f}^T \widetilde{\Phi}}{ \psh{\widetilde{p}}{f}^T (\delta_0 \Phi - \chi_\eta \widetilde{\Phi}) }$ :
\begin{align}
& \left| \psh{f - \psh{\widetilde{p}}{f}^T \widetilde{\Phi}}{ \psh{\widetilde{p}}{f}^T (\delta_0 \Phi - \chi_\eta \widetilde{\Phi}) } \right|  \nonumber \\
& \qquad \leq \left| \left(f(0) - \psh{\widetilde{p}}{f}^T \widetilde{\Phi}(0) \right) \psh{\widetilde{p}}{f}^T (\Phi(0) - \widetilde{\Phi}(0)) \right| \nonumber \\
& \qquad \qquad + \left| \itg{-\eta}{\eta}{\chi_\eta (x) \left( \left( f_e(x) - \psh{\widetilde{p}}{f}^T \widetilde{\Phi}(x) \right) \psh{\widetilde{p}}{f}^T \widetilde{\Phi}(x) - \left( f_e(0) - \psh{\widetilde{p}}{f}^T \widetilde{\Phi}(0) \right) \psh{\widetilde{p}}{f}^T \widetilde{\Phi}(0) \right)}{x} \right|  \nonumber\\
& \qquad \leq C \eta^2 \|f\|_{\infty,\eta}\|f\|_{H^1_\mathrm{per}} + C \eta \left\| \left( \left( f_e - \psh{\widetilde{p}}{f}^T \widetilde{\Phi} \right) \psh{\widetilde{p}}{f}^T \widetilde{\Phi} \right)' \right\|_{\infty,\eta} \nonumber.
\end{align}
We have
\begin{align*}
& \left\| \left( \left( f_e - \psh{\widetilde{p}}{f}^T \widetilde{\Phi} \right) \psh{\widetilde{p}}{f}^T \widetilde{\Phi} \right)' \right\|_{\infty,\eta} \\
& \qquad \leq \left\|  \left( f_e - \psh{\widetilde{p}}{f}^T \widetilde{\Phi} \right) \psh{\widetilde{p}}{f}^T \widetilde{\Phi}' \right\|_{\infty,\eta}  + \left\|  \left( f_e' - \psh{\widetilde{p}}{f}^T \widetilde{\Phi}' \right) \psh{\widetilde{p}}{f}^T \widetilde{\Phi} \right\|_{\infty,\eta} \\
& \qquad \leq  C \eta \|f\|_{\infty,\eta}\|f\|_{H^1_\mathrm{per}} + C \|f\|_{\infty,\eta}\|f\|_{H^1_\mathrm{per}},
\end{align*}
where we applied Lemma \ref{lem:f_paw_approx2} and items 3 and 4 of Lemma \ref{lem:widetilde_p_stuff}. Thus, 
\begin{equation}
\label{eq:low_bnd_pawpseudo_3}
\left| \psh{f - \psh{\widetilde{p}}{f}^T \widetilde{\Phi}}{ \psh{\widetilde{p}}{f}^T (\delta_0 \Phi - \chi_\eta \widetilde{\Phi}) } \right|  \leq C \eta  \|f\|_{\infty,\eta}\|f\|_{H^1_\mathrm{per}}.
\end{equation}

Injecting \eqref{eq:low_bnd_pawpseudo_1}, \eqref{eq:low_bnd_pawpseudo_2} and \eqref{eq:low_bnd_pawpseudo_3}, in \eqref{eq:low_bnd_pawpseudo_central}, we obtain
\begin{align*}
\psh{f}{H^{PAW}f} & \geq \psh{f}{\widetilde{H}f} - C\eta^2 \|f\|_{L^\infty}^2 -C\eta \|f\|_{\infty,\eta}\|f\|_{H^1_\mathrm{per}}\\
& \geq E_0 \psh{f}{\widetilde{S}f} - C\eta^2 \|f\|_{L^\infty}^2 -C\eta \|f\|_{\infty,\eta}\|f\|_{H^1_\mathrm{per}}.
\end{align*}
Using item 3 of Lemma \ref{lem:widetilde_S_L2_bound}, we obtain 
\begin{align*}
E_0 \psh{f}{S^{PAW} f} - C\eta^2 \|f\|_{L^\infty}^2 -C\eta \|f\|_{\infty,\eta}\|f\|_{H^1_\mathrm{per}} & \leq \psh{f}{H^{PAW}f} \\
& \leq E^{PAW} \psh{f}{S^{PAW}f},
\end{align*}
and the result follows from Lemma \ref{lem:H1_boundedness_fPAW} and the Sobolev embedding $\|f\|_{L^\infty} \leq C \|f\|_{H^1_\mathrm{per}}$. 
\end{proof}

\subsubsection{Improvement of the model}
\label{subsec:odd_PAW}

The critical term yielding the upper bound of Theorem \ref{thm:E_PAW_bound} is due to the poor approximation of $f$ by the pseudo wave functions $\widetilde{\phi}_k$. The latter are only even polynomials inside the cut-off region, hence incorporating odd functions to the PAW treatment should improve the upper bound on the PAW eigenvalue $E^{PAW}$.
\newline

The odd atomic wave functions are the functions 
\begin{equation}
\label{eq:tilde_theta}
\widetilde{\theta}_k(x) = \sin (2 \pi k x) , \quad k \in \N^*,
\end{equation}
which are eigenfunctions of the atomic Hamiltonian $-\frac{\mathrm{d}^2}{\mathrm{d}x^2} - Z_0 \sumlim{k \in \Z}{} \delta_k$. As these functions are already smooth, there is no need to take pseudo wave functions different from the atomic wave functions. 

To define the corresponding projector functions $\widetilde{q}_k$, first we denote by
\begin{equation}
\label{eq:G_odd_PAW}
G = \left( \itg{-\eta}{\eta}{\rho_\eta(t) \sin(2\pi j t) \sin (2\pi k t)}{t} \right)_{1 \leq j,k \leq N},
\end{equation}
where $\rho_\eta$ is the smooth cut-off function defined in Section \ref{subsec:VPAW1D}. $G$ is an invertible matrix since it is the Gram matrix of the linearly independent family of functions $(\sin(2\pi k x))_{1 \leq k \leq N}$. Now let $\widetilde{q}_k$ be defined by 
\begin{equation}
\label{eq:tilde_q}
\widetilde{q}_k(x) = \rho_\eta(x) \sumlim{j=1}{N} (G^{-1})_{jk} \widetilde\theta_j(x),
\end{equation}
so the functions $(\widetilde{\theta}_k)_{1 \leq k \leq N}$ and $(\widetilde{q}_k)_{1 \leq k \leq N}$ satisfy
$$
\psh{\widetilde{q}_j}{\widetilde{\theta}_k}= \delta_{jk}. 
$$
The functions $(\widetilde{\theta}_k^a)_{1 \leq k \leq N}$ are equal to $(\widetilde{\theta}_k(\cdot - a))_{1 \leq k \leq N}$ and the projector functions $(\widetilde{q}_k^a)_{1 \leq k \leq N}$ denotes the shifted projector functions $(\widetilde{q}_k(\cdot - a))_{1 \leq k \leq N}$.

Since $\widetilde{\theta}_k$ is an eigenfunction of $-\frac{\mathrm{d}^2}{\mathrm{d}x^2} - Z_0 \sumlim{k \in \Z}{} \delta_k$, for all $1\leq i,j \leq N$ and $I=0,a$,
$$
\psh{\widetilde{\theta}_i^I}{H \widetilde{\theta}_i^I} - \psh{\widetilde{\theta}_i^I}{H_\mathrm{ps} \widetilde{\theta}_i^I} = - \psh{\widetilde{\theta}_i^I}{-Z_I \chi_\eta \widetilde{\theta}_i^I}.
$$
Hence, the new expression of $H^{PAW}$ is given by
\begin{equation}
\label{eq:H_PAW_pseudo_odd}
\begin{split}
H^{PAW} = H_\mathrm{ps} & +  \sumlim{\substack{ i,j=1 \\ I \in \lbrace 0, a \rbrace}}{N} \widetilde{p}^I_i \left( \psh{\phi_i^I}{H \phi_j}_{I,\eta} - \psh{\widetilde{\phi}_i^I}{H_\mathrm{ps} \widetilde{\phi}^I_j}_{I,\eta} \right) \psh{\widetilde{p}^I_j}{\bm{\cdot}}  \\
&  -  \sumlim{\substack{ i,j=1 \\ I \in \lbrace 0, a \rbrace}}{N} \widetilde{q}_i^I \psh{\widetilde{\theta}_i^I}{-Z_I \chi_\eta \widetilde{\theta}_j}_{I,\eta}  \psh{\widetilde{q}_j^I}{\bm{\cdot}},
\end{split}
\end{equation}
and $S^{PAW}$ remains unchanged. 

We denote by $\widetilde{q}^I$ the vector of functions $(\widetilde{q}_1^I, \dots, \widetilde{q}_{N}^I)^T$ and $\widetilde{\Theta}_I$ the vector of functions $(\widetilde{\theta}_1^I, \dots, \widetilde{\theta}_{N}^I)^T$.
\newline

Using the functions $(\widetilde{\theta}_k^I)_{1 \leq k \leq N}$ and $(\widetilde{q}_k^I)_{1 \leq k \leq N}$ in the PAW treatment, we have the following theorem on the lowest PAW eigenvalue. 

\begin{theo}
\label{thm:E_PAW_improved_bound}
Let $\phi_i^I$, $\widetilde{\phi}_i^I$ and $\widetilde{p}_i^I$, for $i=1,\dots,N$ and $I=0,a$ be the functions defined in Section \ref{subsec:generation}. Suppose $\eta_0 > 0$ satisfies Assumption \ref{assump:1} and Assumption \ref{assump:2}. Let $(\widetilde{\theta}_k^I)_{1 \leq k \leq N}$ be the functions given by  \eqref{eq:tilde_theta} and $(\widetilde{q}_k^I)_{1 \leq k \leq N}$ be the functions given by \eqref{eq:tilde_q}.
Let $E^{PAW}$ the lowest eigenvalue of the generalized eigenvalue problem  $H^{PAW} f = E^{PAW} S^{PAW} f$ with $H^{PAW}$ defined in \eqref{eq:H_PAW_pseudo_odd}. Let $E_0$ be the lowest eigenvalue of $H$ \eqref{eq:H_mol}. Then there exists a positive constant $C$ independent of $\eta$ such that for all $0 < \eta \leq \eta_0$
\begin{equation}
- C \eta \leq E^{PAW} - E_0 \leq  C \eta^{2N}.
\end{equation}
\end{theo}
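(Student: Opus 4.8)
The plan is to run the two-sided argument of Theorem~\ref{thm:E_PAW_bound}, treating the last sum in \eqref{eq:H_PAW_pseudo_odd} as a correction that neutralises the odd contribution responsible for the previous $\cO(\eta^2)$ loss. The first step is to record a quadratic-form identity relating the new operator to the one of \eqref{eq:H_PAW_pseudo}. Using the duality $\psh{\widetilde q_j^I}{\widetilde\theta_k^I}=\delta_{jk}$ together with the facts that each $\widetilde\theta_k^I=\theta_k^I$ is a genuine (unpseudised) eigenfunction vanishing at its nucleus, so that only the $\chi_\eta$ part of $\delta V$ survives, one obtains for $g\in H^1_\mathrm{per}(0,1)$
\begin{equation*}
\psh{g}{H^{PAW}g} = \psh{g}{H^{PAW}_{\mathrm{even}}g} + \sumlim{I \in \lbrace 0,a \rbrace}{} Z_I\itg{-\eta}{\eta}{\chi_\eta(x)\left|\psh{\widetilde q^I}{g}^T\widetilde\Theta_I(x)\right|^2}{x},
\end{equation*}
where $H^{PAW}_{\mathrm{even}}$ denotes the operator \eqref{eq:H_PAW_pseudo} analysed in Theorem~\ref{thm:E_PAW_bound}; the added term is nonnegative.

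For the upper bound I would test on the VPAW eigenfunction $\widetilde\psi$ with $\widetilde H\widetilde\psi=E_0\widetilde S\widetilde\psi$. The computation of Section~\ref{subsec:proof_upper_bound_PAW_pseudo} gives $\psh{\widetilde\psi}{H^{PAW}_{\mathrm{even}}\widetilde\psi}=E_0\psh{\widetilde\psi}{\widetilde S\widetilde\psi}-\sum_I Z_I\int_{-\eta}^{\eta}\chi_\eta|\psi_o|^2+\cO(\eta^{2N})$, where $\psi_o$ is the odd part about the relevant nucleus of $\psi=(\mathrm{Id}+T_N)\widetilde\psi$ (note $\psi_o=\widetilde\psi_o$ near each centre, since $T_N$ only adds functions even about that centre). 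Inserting this into the identity above, the two odd contributions combine into $\sum_I Z_I\int_{-\eta}^{\eta}\chi_\eta\bigl(|\psh{\widetilde q^I}{\widetilde\psi}^T\widetilde\Theta_I|^2-|\psi_o|^2\bigr)$, which factorises through $\psh{\widetilde q^I}{\widetilde\psi}^T\widetilde\Theta_I-\psi_o$ and $\psh{\widetilde q^I}{\widetilde\psi}^T\widetilde\Theta_I+\psi_o$.

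The crux is then an odd analogue of Lemma~\ref{lem:approximation}: near the nucleus $\psi_o$ solves $-\psi_o''=E_0\psi_o$ with $\psi_o(0)=0$, hence is smooth, and the $N$ functions $\sin(2\pi kx)$ match its first $N$ odd Taylor coefficients, so I expect
$$
\left\|\psi_o-\psh{\widetilde q^I}{\widetilde\psi}^T\widetilde\Theta_I\right\|_{\infty,\eta,I}\leq C\eta^{2N}.
$$
Since both $\psi_o$ and its projection vanish at the centre and are Lipschitz, they are $\cO(\eta)$ on $(-\eta,\eta)$, so the product estimate bounds the combined odd contribution by $C\eta^{2N+1}$ and $\psh{\widetilde\psi}{H^{PAW}\widetilde\psi}\leq E_0\psh{\widetilde\psi}{\widetilde S\widetilde\psi}+C\eta^{2N}$. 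Concluding with item~4 of Lemma~\ref{lem:widetilde_S_L2_bound} to replace $\widetilde S$ by $S^{PAW}=S^N$ yields $E^{PAW}-E_0\leq C\eta^{2N}$.

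For the lower bound I expect no change. Applying the identity above to an $L^2_\mathrm{per}$-normalised eigenfunction $f$ of the new problem, the added term is nonnegative and may be discarded, while $\psh{f}{H^{PAW}_{\mathrm{even}}f}$ is bounded below by $E_0\psh{f}{\widetilde S f}-C\eta\|f\|_{\infty,\eta,I}\|f\|_{H^1_\mathrm{per}}$ exactly as in \eqref{eq:low_bnd_pawpseudo_1}--\eqref{eq:low_bnd_pawpseudo_3}; item~3 of Lemma~\ref{lem:widetilde_S_L2_bound} and an $H^1_\mathrm{per}$ bound on $f$ then give $E^{PAW}\geq E_0-C\eta$. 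The technical point here is that the regularity and best-approximation results of Lemmas~\ref{lem:der_f_paw}--\ref{lem:f_paw_approx_DeltaV}, on which these estimates rest, were proved for the eigenfunctions of \eqref{eq:PAW_eig_pb_pseudo} and must be re-established for those of \eqref{eq:H_PAW_pseudo_odd}; since the extra term is a smooth, uniformly bounded perturbation of the eigenvalue equation, the induction underlying Lemma~\ref{lem:der_f_paw} carries over with the same rates. The main obstacle is therefore the sine-approximation estimate above with its sharp rate, which requires re-deriving the structural and boundedness results of Lemmas~\ref{lem:matrice_moche} and \ref{lem:widetilde_p_stuff} for the Gram matrix \eqref{eq:G_odd_PAW} and projectors \eqref{eq:tilde_q} built from $(\sin(2\pi kx))_{1\leq k\leq N}$ in place of the polynomial family.
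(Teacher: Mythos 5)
Your proposal is correct and takes essentially the same route as the paper's proof: the upper bound by testing with the VPAW eigenfunction $\widetilde\psi$ so that the critical $\int\chi_\eta|\psi_o|^2$ term responsible for the $\cO(\eta^2)$ loss in Theorem \ref{thm:E_PAW_bound} is cancelled by the sine projection up to $\cO(\eta^{2N})$, and the lower bound inherited unchanged from the even-only analysis. The approximation estimate you flag as the main obstacle is exactly what the paper establishes in Lemmas \ref{lem:T_odd_L2} and \ref{lem:f_odd_approx} (Gram-matrix analysis of the family $(\sin(2\pi kx))_{1\leq k\leq N}$ plus the Taylor-matching argument, which even yields the stronger rate $\eta^{2N+3}$), and your identity expressing $H^{PAW}$ as $H^{PAW}_{\mathrm{even}}$ plus a nonnegative rank-$N$ quadratic form is an equivalent (and for the lower bound slightly cleaner) repackaging of Proposition \ref{prop:H_PAW_formquad2}.
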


The proof of Theorem \ref{thm:E_PAW_improved_bound} follows the same steps of the proof of Theorem \ref{thm:E_PAW_bound}. First, we prove that for $g \in H^1_\mathrm{per}$, the quantity $\psh{g}{H^{PAW}g}$ is equal to $\psh{g}{\widetilde{H}g}$ and error terms of the form  $g - \psh{\widetilde{p}^I}{g}^T \widetilde{\Phi}_I  - \psh{\widetilde{q}^I}{g}^T \widetilde{\Theta}_I$ that needs to be estimated.

\begin{prop}
\label{prop:H_PAW_formquad2}
Let $g \in H^1_{\mathrm{per}}(0,1)$. Then
\begin{equation*}
\begin{split}
\psh{g}{H^{PAW}g} = \, & \psh{g}{\widetilde{H}g} - 2 \sumlim{I \in \lbrace 0, a \rbrace}{} \psh{g - \psh{\widetilde{p}^I}{g}^T \widetilde{\Phi}_I}{ \psh{\widetilde{p}^I}{g}^T \left(H \Phi_I - (H + \delta V) \widetilde{\Phi}_I\right) } \\
& + 2 \sumlim{I \in \lbrace 0, a \rbrace}{}  \psh{g - \psh{\widetilde{q}^I}{g}^T \widetilde{\Theta}_I}{ \psh{\widetilde{q}^I}{g}^T  \delta V\widetilde{\Theta}_I }_{I,\eta} \\
& + \sumlim{I \in \lbrace 0, a \rbrace}{}  \psh{g - \psh{\widetilde{p}^I}{g}^T \widetilde{\Phi}_I - \psh{\widetilde{q}^I}{g}^T \widetilde{\Theta}_I}{\delta V\left( g - \psh{\widetilde{p}^I}{g}^T \widetilde{\Phi}_I  - \psh{\widetilde{q}^I}{g}^T \widetilde{\Theta}_I \right)}_{I,\eta} .
\end{split}
\end{equation*}
\end{prop}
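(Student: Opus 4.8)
The plan is to reduce the identity to Proposition \ref{prop:H_PAW_formquad}, treating the odd functions as a correction to the even-only operator already analysed there. Comparing \eqref{eq:H_PAW_pseudo_odd} with the operator of Proposition \ref{prop:H_PAW_formquad}, the two differ only by the additional sum $-\sumlim{i,j=1}{N}\sum_{I}\widetilde q_i^I\psh{\widetilde\theta_i^I}{-Z_I\chi_\eta\widetilde\theta_j^I}_{I,\eta}\psh{\widetilde q_j^I}{\bm\cdot}$. The first thing I would observe is that, since each $\widetilde\theta_k^I$ is odd about the site $I$ and therefore vanishes at $I$, the Dirac part of $\delta V$ is inert on these functions, so $\psh{\widetilde\theta_i^I}{-Z_I\chi_\eta\widetilde\theta_j^I}_{I,\eta}=\psh{\widetilde\theta_i^I}{\delta V\widetilde\theta_j^I}_{I,\eta}$. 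Testing against $g$ and abbreviating $v_I:=\psh{\widetilde p^I}{g}^T\widetilde\Phi_I$ and $w_I:=\psh{\widetilde q^I}{g}^T\widetilde\Theta_I$, this yields
\[
\psh{g}{H^{PAW}g}=\psh{g}{H_{\mathrm e}^{PAW}g}-\sumlim{I\in\{0,a\}}{}\psh{w_I}{\delta V\,w_I}_{I,\eta},
\]
where $H_{\mathrm e}^{PAW}$ denotes the even-only operator handled in Proposition \ref{prop:H_PAW_formquad}.

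Next I would substitute the expansion of $\psh{g}{H_{\mathrm e}^{PAW}g}$ provided by Proposition \ref{prop:H_PAW_formquad}. The terms $\psh{g}{\widetilde Hg}$ and the $\widetilde p$-term then coincide verbatim with the first two lines of the statement, so the whole matter reduces to reconciling the $\delta V$-contributions, i.e.\ to proving, for each $I$,
\[
\psh{g-v_I}{\delta V(g-v_I)}_{I,\eta}-\psh{w_I}{\delta V\,w_I}_{I,\eta}=2\psh{g-w_I}{\delta V\,w_I}_{I,\eta}+\psh{g-v_I-w_I}{\delta V(g-v_I-w_I)}_{I,\eta}.
\]
Expanding the right-hand side with the symmetry of $\delta V$ and $g-v_I-w_I=(g-v_I)-w_I$, every term cancels against the left-hand side except for a single cross term, and the identity is seen to be equivalent to $\psh{v_I}{\delta V\,w_I}_{I,\eta}=0$.

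The vanishing of this cross term is the only substantive point, and I expect it to be the crux. It is a parity statement on the interval centred at the site $I$: the components of $\widetilde\Phi_I$ are the even pseudo wave functions, so $v_I$ is even, whereas the components of $\widetilde\Theta_I$ are the functions $\sin(2\pi k(\cdot-I))$, so $w_I$ is odd; and on this interval $\delta V$ reduces to $Z_I\big(\delta_I-\tfrac1\eta\chi(\tfrac{\cdot-I}{\eta})\big)$, which is even, the Dirac sitting at the centre and $\chi$ being even. Consequently $w_I(I)=0$ annihilates the Dirac contribution to $\psh{v_I}{\delta V\,w_I}_{I,\eta}$, while $\int\chi_\eta\,v_Iw_I$ is the integral of an odd function over a symmetric interval and hence vanishes. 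Substituting $\psh{v_I}{\delta V\,w_I}_{I,\eta}=0$ into the reduction above and summing over $I\in\{0,a\}$ produces exactly the claimed expression.
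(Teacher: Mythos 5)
Your proof is correct, and it is a legitimate (and more explicit) version of what the paper leaves implicit: the paper's entire proof of Proposition \ref{prop:H_PAW_formquad2} is the sentence ``similar to Proposition \ref{prop:H_PAW_formquad}'', meaning a direct re-expansion of $\psh{g}{H^{PAW}g}$ via Lemma \ref{prop:H_PAW_link} and Proposition \ref{prop:H_N-widetilde-H}, whereas you treat the even-only Proposition \ref{prop:H_PAW_formquad} as a black box, subtract it, and reduce everything to a polarization identity. Your algebra checks out: writing $v_I=\psh{\widetilde{p}^I}{g}^T\widetilde{\Phi}_I$ and $w_I=\psh{\widetilde{q}^I}{g}^T\widetilde{\Theta}_I$, the difference between the stated right-hand side and what the even-only proposition gives is exactly $2\psh{v_I}{\delta V w_I}_{I,\eta}$ per site, so the proposition as stated is \emph{equivalent} to the vanishing of this cross term --- a fact you correctly isolate as the only substantive new ingredient, and which the paper's one-line proof never surfaces. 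Your verification of it is also right: $w_I(I)=0$ kills the Dirac part (this same observation justifies $\psh{\widetilde{\theta}_i^I}{-Z_I\chi_\eta\widetilde{\theta}_j^I}_{I,\eta}=\psh{\widetilde{\theta}_i^I}{\delta V\widetilde{\theta}_j^I}_{I,\eta}$, matching the paper's own derivation of \eqref{eq:H_PAW_pseudo_odd}), and the $\chi_\eta$ part vanishes since $v_I$ is even and $w_I$ odd about the site. One point deserves emphasis: your argument uses that $\chi$ is even, and the paper only assumes $\chi$ smooth, nonnegative, supported in $[-1,1]$ with unit integral --- evenness is nowhere stated. Since the identity is equivalent to the cross-term vanishing, some such symmetry hypothesis is genuinely needed for the proposition to hold, so your explicit flagging of it is not pedantry but identifies an implicit assumption of the paper (one consistent with its pervasive even/odd bookkeeping). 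What your route buys is precisely this diagnostic clarity; what the paper's route buys is brevity, at the cost of hiding where parity is actually used.
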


\begin{proof}
The proof is similar to the proof of Proposition \ref{prop:H_PAW_formquad}. 
\end{proof}

\begin{lem}
\label{lem:T_odd_L2}
There exists a constant $C$ independent of $\eta$ such that for all $f \in H^1_\mathrm{per}(0,1)$ for $x$ in $(-\eta, \eta)$,
\begin{equation*}
\left| \psh{\widetilde{q}^I}{f}^T \widetilde{\Theta}_I(x) \right| \leq C \|f\|_{\infty, \eta, I}.
\end{equation*}
\end{lem}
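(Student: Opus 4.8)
The plan is to reduce, via the rescaling $x=\eta u$, the quantity $\psh{\widetilde{q}^I}{f}^T\widetilde{\Theta}_I(x)$ to the weighted least-squares projection of $f$ onto $\mathrm{span}(\widetilde{\theta}_1^I,\dots,\widetilde{\theta}_N^I)$, and then to exhibit a Vandermonde-type factorisation of the (badly conditioned) Gram matrix $G$ that makes all the negative powers of $\eta$ cancel, in the same spirit as the matrix $M_\eta$ does in Lemma \ref{lem:matrice_moche} for the even case. Dropping the index $I$ and setting $\Theta_\eta(u):=(\sin(2\pi\eta u),\dots,\sin(2\pi N\eta u))^T$, one has $\widetilde{\Theta}(x)=\Theta_\eta(x/\eta)$ for $|x|\leq\eta$, and since $\rho_\eta(t)=\rho(t/\eta)$ on $(-\eta,\eta)$, the definitions \eqref{eq:G_odd_PAW}--\eqref{eq:tilde_q} give, after the change of variables $t=\eta s$,
\[
G = \eta \int_{-1}^1 \rho(s)\,\Theta_\eta(s)\Theta_\eta(s)^T\,\mathrm{d}s, \qquad \psh{\widetilde{q}}{f} = \eta\, G^{-1}\!\int_{-1}^1 \rho(s)\,\Theta_\eta(s) f(\eta s)\,\mathrm{d}s,
\]
so that $\psh{\widetilde{q}}{f}^T\widetilde{\Theta}(x)=\eta\bigl(\int_{-1}^1\rho\,\Theta_\eta f(\eta\cdot)\bigr)^T G^{-1}\Theta_\eta(x/\eta)$ by symmetry of $G$.

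First I would Taylor-expand each $\sin(2\pi k\eta u)$ to order $2N-1$ and write $\Theta_\eta(u)=W_\eta\,P_{\mathrm{o}}(u)+\eta^{2N+1}R_\eta(u)$, where $P_{\mathrm{o}}(u)=(u,u^3,\dots,u^{2N-1})^T$, the remainder $R_\eta$ is uniformly bounded on $(-1,1)$ (Taylor remainder of an entire function), and crucially $W_\eta$ factorises as $W_\eta=D_J\,V\,D_\eta$ with $D_J=\mathrm{Diag}(1,\dots,N)$, $D_\eta=\mathrm{Diag}(\eta,\eta^3,\dots,\eta^{2N-1})$ and $V$ a fixed invertible (generalized Vandermonde) matrix attached to the distinct nodes $1^2,\dots,N^2$, hence invertible independently of $\eta$. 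Inserting this into $G$ yields
\[
G = \eta\, D_J V D_\eta\,(G_{\mathrm{o}}+E_\eta)\,D_\eta V^T D_J, \qquad G_{\mathrm{o}}=\int_{-1}^1\rho\,P_{\mathrm{o}}P_{\mathrm{o}}^T,
\]
a fixed positive definite matrix, with $E_\eta$ collecting the remainder contributions; the key point is that the prefactor $\eta^{2N+1}$ of $R_\eta$ always meets a factor $D_\eta^{-1}$ whose largest entry is $\eta^{-(2N-1)}$, so that $\|E_\eta\|=\cO(\eta^2)$ and $G_{\mathrm{o}}+E_\eta$ stays uniformly invertible.

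From this, $G^{-1}=\tfrac1\eta D_J^{-1}V^{-T}D_\eta^{-1}(G_{\mathrm{o}}+E_\eta)^{-1}D_\eta^{-1}V^{-1}D_J^{-1}$. Writing likewise $\int_{-1}^1\rho\,\Theta_\eta f(\eta\cdot)=D_J V D_\eta\, c+\eta^{2N+1}d$ with $c=\int_{-1}^1\rho\,P_{\mathrm{o}} f(\eta\cdot)$ and $|c|+|d|\leq C\|f\|_{\infty,\eta}$, and $\Theta_\eta(x/\eta)=D_J V D_\eta P_{\mathrm{o}}(x/\eta)+\eta^{2N+1}R_\eta(x/\eta)$, I would substitute these into the bilinear expression above. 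In the leading term every factor $D_\eta$, $V$ and $D_J$ cancels against its inverse, leaving exactly $c^T(G_{\mathrm{o}}+E_\eta)^{-1}P_{\mathrm{o}}(x/\eta)$, which is bounded by $C\|f\|_{\infty,\eta}$ since $(G_{\mathrm{o}}+E_\eta)^{-1}$ is uniformly bounded and $|P_{\mathrm{o}}(x/\eta)|\leq\sqrt N$ for $|x|\leq\eta$; in each cross term the surviving factor $\eta^{2N+1}D_\eta^{-1}$ is $\cO(\eta^2)$, so these contribute only $\cO(\eta^2)\|f\|_{\infty,\eta}$, and the stated estimate follows.

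The main obstacle is precisely the severe ill-conditioning of $G$, whose eigenvalues range over the scales $\eta^3,\eta^5,\dots,\eta^{4N-1}$, so that $G^{-1}$ diverges and no naive bound survives. The entire argument hinges on verifying that these diverging diagonal factors cancel \emph{exactly} in the bilinear form $\psh{\widetilde{q}}{f}^T\widetilde{\Theta}$, which rests on the uniform invertibility of $V$ and the positive definiteness of $G_{\mathrm{o}}$. The only genuinely new feature compared with Lemma \ref{lem:matrice_moche} is that the functions $\widetilde{\theta}_k=\sin(2\pi k\,\cdot)$ are not polynomials, so one cannot represent $\widetilde{\Theta}$ exactly in a finite basis; one must keep the Taylor remainder $R_\eta$ and check that it is absorbed at order $\eta^2$, which is exactly what the power bookkeeping above provides.
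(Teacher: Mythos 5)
Your proof is correct and follows essentially the same route as the paper: a Taylor expansion of the sines to order $2N-1$ with an $\cO(\eta^{2N+1})$ remainder, identification of the leading part of $G$ as a fixed positive definite Gram matrix of odd monomials conjugated by $\eta$-scaled moment vectors, and the same power bookkeeping showing each remainder factor $\eta^{2N+1}$ meets at worst $\eta^{-(2N-1)}$ from $G^{-1}$, leaving $\cO(\eta^2)$ corrections. Your explicit factorization $W_\eta = D_J V D_\eta$ with the invertible Vandermonde matrix $V$ is just a more explicit packaging of the paper's dual-basis construction (its vectors $w_k$ with $\|w_k\| = \cO(\eta^{-2k-1})$ play exactly the role of your $D_\eta^{-1}V^{-1}D_J^{-1}$), so the two arguments coincide in substance.
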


\begin{proof} For clarity, we will drop the index $I$. For $0 \leq j \leq N-1$, let
$$
v_j = (2 \pi \eta)^{2j+1} \begin{pmatrix}
1 \\
2^{2j+1} \\
\vdots \\
N^{2j+1}
\end{pmatrix}, \qquad \hat{v}_j = \frac{1}{\eta^{2j+1}} v_j.
$$
Let $(\hat{w}_j)_{0 \leq j \leq N-1}$ be the dual basis of $(\hat{v}_j)_{0 \leq j \leq N-1}$ and $w_j = \frac{1}{\eta^{2j+1}} \hat{w}_j$. Let $M$ be the matrix such that for all $0 \leq j,k\leq N-1$,
$$
M_{jk} =\frac{(-1)^{j+k}}{(2j+1)!(2k+1)!} \itg{-1}{1}{\rho(t)t^{2j+2k+2}}{t}.
$$
By a Taylor expansion, we obtain for $t \in (-1,1)$,
$$
\widetilde{\Theta}(\eta t) = \begin{pmatrix}
\sin (2\pi \eta t) \\
\vdots \\
\sin (2 \pi \eta Nt)
\end{pmatrix} = \sumlim{k=0}{N-1} \frac{(-1)^k(2\pi \eta t)^{2k+1}}{(2k+1)!} \begin{pmatrix}
1 \\
2^{2j+1} \\
\vdots \\
N^{2j+1}
\end{pmatrix} + R_{\widetilde{\Theta}}(\eta t),
$$
where $|R_{\widetilde{\Theta}}(\eta t)| \leq C \eta^{2N+1}$. Then, we can rewrite the matrix $G$ given by \eqref{eq:G_odd_PAW} 
\begin{align*}
G & = \eta \itg{-1}{1}{\rho(t) \left( \sumlim{j=0}{N-1} \frac{(-1)^j \eta^{2j+1}}{(2j+1)!} \hat{v}_j t^{2j+1} + R_{\widetilde{\Theta}}(\eta t) \right) \left( \sumlim{k=0}{N-1} \frac{(-1)^k \eta^{2k+1}}{(2k+1)!} \hat{v}_k t^{2k+1} + R_{\widetilde{\Theta}}(\eta t) \right)^T }{t}  \\
& = \eta \sumlim{j,k=0}{N-1} M_{jk} v_j v_k^T + \eta \itg{-1}{1}{\sumlim{j=0}{N-1} \frac{(-1)^j }{(2j+1)!} t^{2j+1} \left(v_jR_{\widetilde{\Theta}}(\eta t)^T + R_{\widetilde{\Theta}}(\eta t)v_j^T \right) }{t} + \cO(\eta^{4N+3}).
\end{align*}
Hence, we have for $0 \leq j,k \leq N-1$,
$$
w_j^T G w_k = \eta M_{jk} + \eta w_j^T \mathcal{R}_k + \eta \mathcal{R}_j^T w_k + \cO (\eta^5), 
$$
where 
$$
\mathcal{R}_k = \itg{-1}{1}{\rho(t) \frac{(-1)^k }{(2k+1)!} t^{2k+1} R_{\widetilde{\Theta}}(\eta t)}{t} .
$$
But $\|w_k\|= \cO (\eta^{-2k-1})$ and $|R_{\widetilde{\Theta}}(\eta t)| \leq C \eta^{2N+1}$, hence $\mathcal{R}_j^T w_k = \cO(\eta^2)$. Thus, if we denote by 
$$
W = \begin{pmatrix}
w_0^T \\
\vdots \\
w_{N-1}^T
\end{pmatrix}, \qquad V = \begin{pmatrix}
v_0^T \\
\vdots \\
v_{N-1}^T
\end{pmatrix},
$$
we obtain
$$
W G W^T = \eta M + \cO(\eta^3),
$$
and 
\begin{equation}
\label{eq:G_-1}
W^{-T} G^{-1} W^{-1} = V G^{-1} V^T = \frac{1}{\eta} M^{-1} + \cO \left(\eta\right).
\end{equation}
Thus, we have for $f \in L^\infty(-\eta,\eta)$ and $x \in (-\eta,\eta)$
\begin{multline}
\label{eq:qf_Theta}
\psh{\widetilde{q}}{f}^T \widetilde{\Theta}(x)  = \eta \left(\itg{-1}{1}{\rho(t) f(\eta t) G^{-1} \left( \sumlim{j=0}{N-1} \frac{(-1)^j}{(2j+1)!} t^{2j+1} v_j + R_{\widetilde{\Theta}}(\eta t)^T \right)}{t} \right)^T \\
\left( \sumlim{j=0}{N-1} \frac{(-1)^j}{(2j+1)!} \left( \frac{x}{\eta} \right)^{2j+1} v_j + R_{\widetilde{\Theta}}(x) \right).
\end{multline}
By expanding \eqref{eq:qf_Theta}, three types of terms arise involving
\begin{enumerate}
\item $v_j^T G^{-1} v_k$: by \eqref{eq:G_-1}, we have $| v_j^T G^{-1} v_k| = \cO \left(\tfrac{1}{\eta}\right)$;
\item $v_j^T G^{-1} R_{\widetilde{\Theta}}(x)$: by \eqref{eq:G_-1}, $\|v_jG^{-1}\| = \cO\left(\tfrac{1}{\eta^{2N-1}}\right)$ and because $R_{\widetilde{\Theta}}(x) = \cO(\eta^{2N+1})$, we have $|v_j^T G^{-1} R_{\widetilde{\Theta}}(x)| = \cO \left(\eta^2\right)$;
\item $R_{\widetilde{\Theta}}(\eta t)^T G^{-1} R_{\widetilde{\Theta}}(x)$: by \eqref{eq:G_-1}, we deduce that $\|G^{-1}\| = \cO\left(\tfrac{1}{\eta^{4N-1}}\right)$, but $R_{\widetilde{\Theta}}(x) = \cO(\eta^{2N+1})$, hence $|R_{\widetilde{\Theta}}(\eta t)^T G^{-1} R_{\widetilde{\Theta}}(x)| = \cO\left(\eta^3\right)$.
\end{enumerate}
Thus, 
$$
|\psh{\widetilde{q}}{f}^T \widetilde{\Theta}(x)| \leq C \|f\|_{\infty,\eta}.
$$
\end{proof}

\begin{lem}
\label{lem:f_odd_approx}
Let $f$ be a smooth and odd function. Then we have 
\begin{equation*}
\left\| f - \psh{\widetilde{q}^I}{f}^T \widetilde{\Theta}_I \right\|_{\infty, \eta, I} \leq C \eta^{2N+3} \|f^{(2N+3)}\|_{\infty, \eta, I}, 
\end{equation*}
\end{lem}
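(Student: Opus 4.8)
The plan is to mirror the strategy of Lemma \ref{lem:f_paw_approx}, replacing the even polynomial pseudo wave functions by the odd sine functions $\widetilde{\theta}_k^I$ and exploiting the reproducing property of the map $P := \psh{\widetilde{q}^I}{\bm{\cdot}}^T \widetilde{\Theta}_I$. Since $\psh{\widetilde{q}_j^I}{\widetilde{\theta}_k^I} = \delta_{jk}$, the operator $P$ is a projection onto $\mathrm{span}(\widetilde{\theta}_1^I, \dots, \widetilde{\theta}_N^I)$; in particular $Pg = g$ for every $g$ in this span. Hence, for any such $g = \sumlim{k=1}{N} \alpha_k \widetilde{\theta}_k^I$,
$$
f - \psh{\widetilde{q}^I}{f}^T \widetilde{\Theta}_I = f - Pf = (f - g) - P(f - g),
$$
and Lemma \ref{lem:T_odd_L2} gives $\left\| P(f-g) \right\|_{\infty,\eta,I} \leq C \|f - g\|_{\infty,\eta,I}$ with $C$ independent of $\eta$. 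It therefore suffices to exhibit one family $(\alpha_k)_{1 \leq k \leq N}$ for which $\|f - g\|_{\infty,\eta,I}$ has the desired size.

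To construct $g$, I would expand the odd smooth function $f$ in its Taylor polynomial about the atomic site, keeping only the odd powers, and write the remainder in integral form so that it is controlled by the appropriate power of $\eta$ times a high-order derivative of $f$ on the acting region. The coefficients $\alpha_k$ are then fixed by requiring $g$ to reproduce the leading odd Taylor coefficients of $f$: after the rescaling $x = \eta t$, this interpolation problem is governed precisely by the generalized Vandermonde structure encoded in the vectors $v_j$ and the matrix $G$ of Lemma \ref{lem:T_odd_L2}, whose controlled inverse $V G^{-1} V^T = \tfrac{1}{\eta} M^{-1} + \cO(\eta)$ makes the $\alpha_k$ well-defined and allows one to estimate $f - g$ termwise.

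The delicate point, and the main obstacle, is the bookkeeping of the powers of $\eta$. On the one hand the rescaled interpolation coefficients carry negative powers of $\eta$ (one has $\|w_k\| = \cO(\eta^{-2k-1})$ in Lemma \ref{lem:T_odd_L2}); on the other hand the Taylor remainder of $f$ and the remainders $R_{\widetilde{\Theta}}$ of the sine functions each contribute positive powers. These must be combined so that the negative and positive contributions cancel down to the claimed order, exactly as the passage from the raw matrix $C_\eta^{(P)}$ to the clean estimates is handled in the even case through \eqref{eq:C_eta}. Once this cancellation is carried out, substituting back through $f - Pf = (f - g) - P(f - g)$ and applying the uniform $L^\infty$ bound of Lemma \ref{lem:T_odd_L2} yields the stated estimate.
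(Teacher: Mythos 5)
Your soft framework coincides with the paper's: the decomposition $f - Pf = (f-g) - P(f-g)$ for $g \in \mathrm{span}(\widetilde{\theta}_1^I,\dots,\widetilde{\theta}_N^I)$, followed by the uniform $L^\infty$ bound of Lemma \ref{lem:T_odd_L2}, is exactly the paper's concluding device, and Taylor-expanding $f$ and the sines around the site is the paper's argument as well. The problem is that you stop precisely where the content of the lemma lies. Everything up to your last paragraph merely reduces the claim to: \emph{there exist coefficients $(\alpha_k)_{1\leq k\leq N}$ with $\bigl\| f - \sum_{k=1}^N \alpha_k \widetilde{\theta}_k^I \bigr\|_{\infty,\eta,I} \leq C\,\eta^{2N+3}\|f^{(2N+3)}\|_{\infty,\eta,I}$} --- and this you never establish, declaring the power counting ``the main obstacle'' and asserting that the contributions ``cancel down to the claimed order'' without performing the computation. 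This deferral is not bookkeeping: with $N$ sines you can match only the $N$ odd Taylor coefficients of $f$ of orders $1,3,\dots,2N-1$ (the system $\sum_{k=1}^N \alpha_k (2\pi k)^{2j+1} = (-1)^j f^{(2j+1)}(0)$, $0 \leq j \leq N-1$), which leaves an unmatched $x^{2N+1}$ term whose coefficient involves $f^{(2N+1)}(0)$ and the $\alpha_k$, i.e.\ low-order data of $f$ that is not controlled by $\|f^{(2N+3)}\|_{\infty,\eta,I}$. Concretely, for $N=1$ and a smooth odd $f$ with $f(x)=x$ on $(-\eta,\eta)$, the right-hand side of your reduced claim vanishes while $\inf_\alpha \| x - \alpha \sin(2\pi x)\|_{L^\infty(-\eta,\eta)} \asymp \eta^3$; so the route ``match the leading coefficients and absorb the rest'' cannot by itself produce the stated exponent. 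Reaching order $\eta^{2N+3}$ requires the expansions of $f$ and of the $\widetilde{\theta}_k$ to agree through order $2N+1$ --- that is, $N+1$ conditions for $N$ free coefficients --- and handling this overdetermination is the actual crux, which the paper compresses into ``it is easy to show'' but which your proposal leaves entirely open.

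A secondary inaccuracy: the matching system above is governed by the $\eta$-independent generalized Vandermonde matrix $\left((2\pi k)^{2j+1}\right)_{0\leq j\leq N-1,\,1\leq k\leq N}$, not by $G$. The matrix $G$, the dual vectors $w_k$ with $\|w_k\| = \cO(\eta^{-2k-1})$, and the identity $V G^{-1} V^T = \tfrac{1}{\eta}M^{-1} + \cO(\eta)$ all belong to the proof of the stability bound of Lemma \ref{lem:T_odd_L2}, which you already invoke as a black box. Consequently the coefficients $\alpha_k$ are $\cO(1)$ and no cancellation between negative and positive powers of $\eta$ is needed in their construction: the difficulty you flagged is located in the wrong place, while the difficulty actually present --- the unmatched order-$(2N+1)$ term --- goes unaddressed.
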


\begin{proof}
We simply write the Taylor expansion of $f$ around 0. Then by expanding the functions $\theta_k$ around $0$, it is easy to show that the difference between $f$ and the best approximation in $(-\eta,\eta)$ of $f$ by a linear combination of $\theta_k$ is bounded by the Taylor remainder of $f$ and terms arising from the truncation of the expansions of the functions $\theta_k$ which are both of order $\cO(\eta^{2N+3})$. We then conclude using Lemma \ref{lem:T_odd_L2}.
\end{proof}

The presence of $\widetilde{\theta}_j$ and $\widetilde{q}_j$ (see \eqref{eq:H_PAW_pseudo_odd} above) does not change the lower bound of the PAW eigenvalue as it does not improve the estimate of critical terms in the proof of lower bound in Theorem \ref{thm:E_PAW_bound}. However, we get a much better upper bound as it is the odd part of the wave function $\psi$ which prevents to have a better bound. Thus introducing these odd functions in the PAW treatment, we have Theorem \ref{thm:E_PAW_improved_bound}. 

\section{Numerical tests}
\label{sec:numerics}

In this section, some numerical tests are provided to confirm the bounds obtained in Theorems \ref{thm:bound_trunc_PAW}, \ref{thm:E_PAW_bound} and \ref{thm:E_PAW_improved_bound}. The simulations of the different PAW versions are done with $a=0.4$ and $Z_0 = Z_a =10$. 

\subsection{The PAW equations}
\label{subsec:PAW}

\subsubsection{Without pseudopotentials}
\label{subsec:PAW_simu_sans_pseudo}

We solve the generalized eigenvalue problem 
\begin{equation*}
H^N f = E^{(\eta)} S^N f ,
\end{equation*}
where $H^N$ and $S^N$ are defined by Equations (\ref{eq:H_N}) and (\ref{eq:S_N}), by expanding $f$ in 512 plane-waves. We study how $E^{(\eta)}$ behaves as a function of $\eta$. In our case, the PAW eigenvalue $E^{(\eta)}$ is smaller than $E_0$. For this regime, Theorem \ref{thm:bound_trunc_PAW} states that $E^{(\eta)}$ converges at least linearly to $E_0$, which is what we observe in Figure \ref{grph:PAW_tronque}.

\begin{figure}[H]
\centering
\includegraphics[width = 0.7\textwidth]{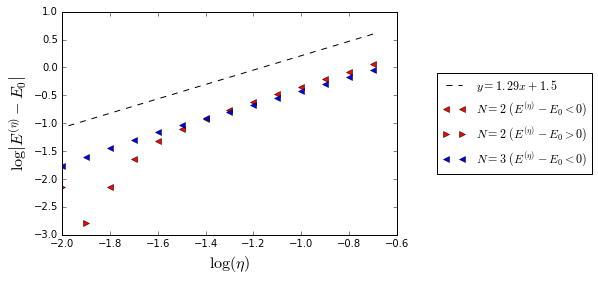}
\caption{Error on the lowest eigenvalue of the truncated PAW equations \eqref{eq:PAW_eig_pb}}
\label{grph:PAW_tronque}
\end{figure}

\subsubsection{With pseudopotentials}

The eigenfunction $f$ is expanded in $1000$ plane waves for which convergence is reached. The integrals of plane-waves against PAW functions are computed with an accurate numerical integral scheme. 

In view of Figure \ref{grph:PAW_pseudo}, the lower bound in Theorem \ref{thm:E_PAW_bound} seems sharp. The use of odd PAW functions improves the error on the PAW eigenvalue (Figure \ref{grph:PAW_pseudo_odd}) for a range of moderate values of the cut-off radius $\eta$. However, the use of odd PAW functions does not give a better lower bound. 

Finally, the upper bound in Theorem \ref{thm:E_PAW_improved_bound} seems optimal (see Figure \ref{grph:PAW_pseudo_odd}). For $N=2$, we have a slope close to the theoretical value ($2N=4$).  

\begin{figure}[H]
\centering
\includegraphics[width = 0.7\textwidth]{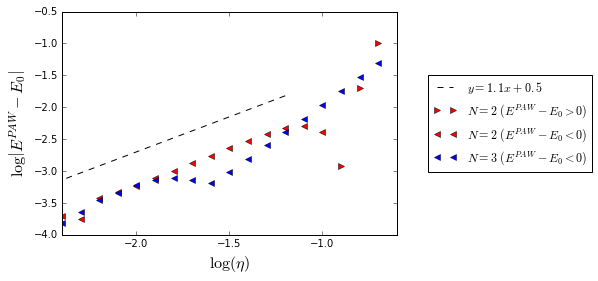}
	
\caption{Error on the lowest eigenvalue of the PAW equations \eqref{eq:PAW_eig_pb_pseudo} with pseudopotentials}
\label{grph:PAW_pseudo}
\end{figure}

\begin{figure}[H]
\centering

\includegraphics[width = 0.7\textwidth]{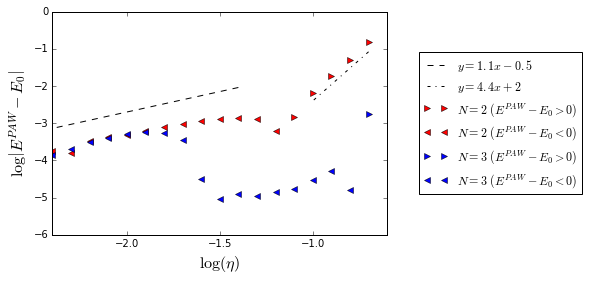}
	
\caption{Error on the lowest eigenvalue of the PAW equations with pseudopotentials including odd PAW functions}
\label{grph:PAW_pseudo_odd}
\end{figure}

\subsection{Comparison between the PAW and VPAW methods in pre-asymptotic regime}

The simulations are run for a fixed value of $d =6$ and different values of $\eta = 0.1$ and $\eta = 0.2$. In Figure \ref{grph:PAW-VPAW}, $E_0$ is the lowest eigenvalue of the 1D-Schr\"odinger operator $H$. The PAW method considered in Figure \ref{grph:PAW-VPAW} is the generalized eigenvalue problem \eqref{eq:PAW_eig_pb_pseudo}.

Using Fourier methods to solve the VPAW eigenvalue problem \eqref{eq:H_VPAW_eig_pb}, we have the following bound on the computed eigenvalue $E_M^{\scriptscriptstyle \mathrm{VPAW}}$ \cite{blanc2017vpaw1d}:
\begin{equation}
\label{eq:rappel}
0 < E_M^{\scriptscriptstyle \mathrm{VPAW}} - E_0 \leq C \left( \frac{\eta^{4N}}{M} + \frac{1}{\eta^{2d-2}} \frac{1}{M^{2d-1}} \right),
\end{equation}
where $M$ is the number of plane-waves, $N$ the number of PAW functions and $d$ the regularity of the PAW pseudo wave functions $\widetilde{\phi}_k$.

As expected, the PAW method quickly converges to $E^{PAW}$ which, according to Theorem \ref{thm:E_PAW_bound}, is close but not equal to $E_0$. Although the VPAW method does not remove the Dirac singularities -which is why, asymptotically, the VPAW method convergence rate is of order $\mathcal{O}\left( \frac{1}{M} \right)$-, it converges faster to $E_0$ than the PAW method with pseudopotentials. 



\begin{figure}[H]
\centering
	\begin{subfigure}[b]{0.9\textwidth}
	\includegraphics[width=\textwidth]{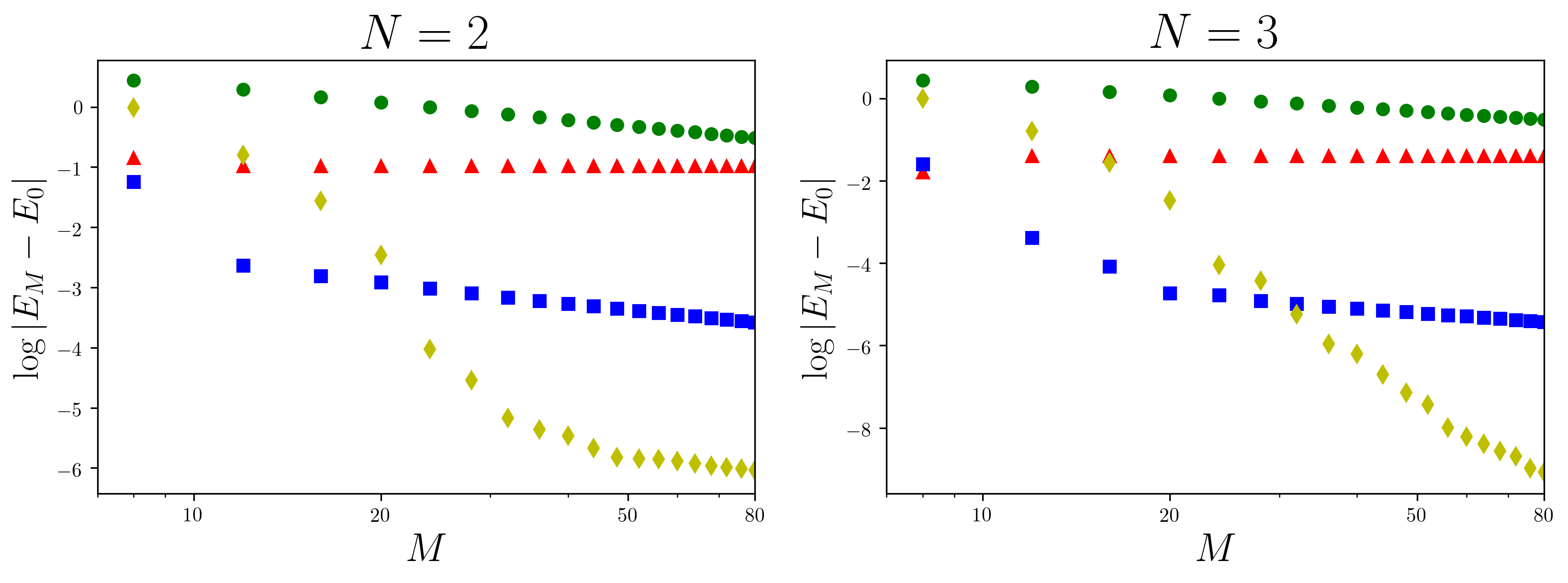}
	\end{subfigure}
	
	\begin{subfigure}[b]{0.7\textwidth}
	\includegraphics[width=\textwidth]{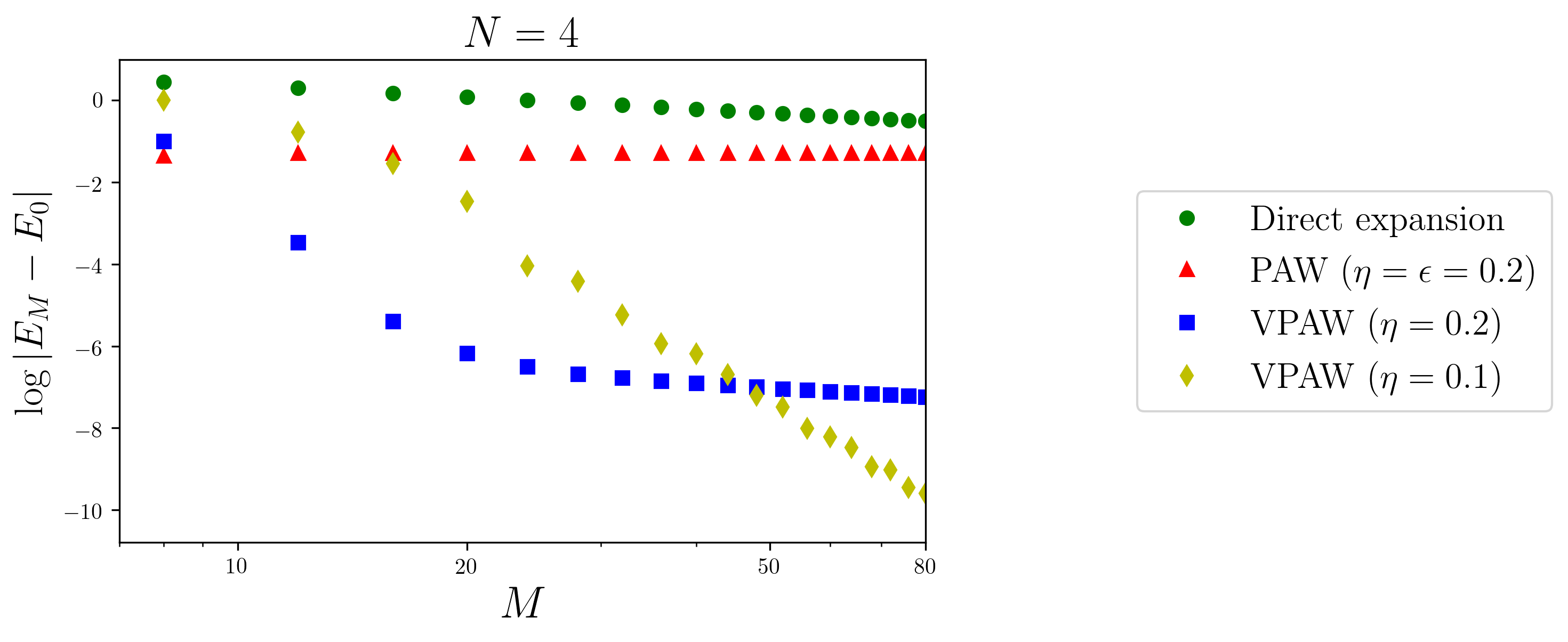}
	\end{subfigure}
\caption{Comparison between the PAW and VPAW methods}
\label{grph:PAW-VPAW}
\end{figure}

\newpage


\end{document}